%

\documentclass[dvips,???,preprint]{imsart}
\RequirePackage[OT1]{fontenc}
\RequirePackage{amsthm,amsmath}
\RequirePackage[numbers]{natbib}

\usepackage{graphicx,amsmath,amsfonts,amscd,amssymb,bm,epsfig,epsf,url,
color,algorithm,algorithmic, mathrsfs}
\usepackage[small,bf]{caption}
\startlocaldefs
\newtheorem{theorem}{Theorem}[section]
\newtheorem{lemma}[theorem]{Lemma}

\newtheorem{proposition}[theorem]{Proposition}

\newtheorem{remark}[theorem]{Remark}




\newcommand{\diag}{\operatorname{diag}}

\numberwithin{equation}{section}
\endlocaldefs

\begin{document}

\begin{frontmatter}

\title{Eigenvalues of Deformed Random Matrices}
\runtitle{Eigenvalues of Deformed Random Matrices}

\begin{aug}
\author{\fnms{Minyu Peng}}
\ead[label=a1]{mpeng@stanford.edu}
\address{
Stanford University\\
Department of Mathematics\\
Building 380, Sloan Hall\\
Stanford, CA, 94305\\
\printead{a1}}
\affiliation{Stanford University}
\runauthor{Minyu Peng}
\end{aug}

\begin{abstract}
We present a novel approach to prove non-asymptotic deviation bounds
for extreme eigenvalues of deformed random matrices. This approach
applies to many deformed Gaussian matrix models; two such models are
studied in detail: the deformed GOE and the spiked population model.
\end{abstract}


\begin{keyword}
\kwd{random matrices} \kwd{deformed GOE} \kwd{spiked population
model} \kwd{small rank perturbation} \kwd{largest eigenvalue}
\kwd{concentration of measure} \kwd{Slepian's lemma}\kwd{phase
transition} \kwd{non-asymptotic} \kwd{deviation bound}
\end{keyword}

\end{frontmatter}

\maketitle
\section{Introduction}

The aim of this paper is to study extreme eigenvalues of matrices
from deformed random matrix ensembles. We will consider both the
deformed GOE and the spiked population model.

\subsection{Models and some known results}

The Gaussian Orthogonal Ensemble, or GOE for short, is probably the
most widely studied model in random matrix theory. The deformed GOE
is a finite rank perturbation of the Gaussian Orthogonal Ensemble.
More precisely, let $G\in GOE(n,\frac{\sigma^2}{n})$ and $P$ be a
real symmetric matrix, we want to study the extreme eigenvalues of
$A=P+G$.

When the dimension goes to infinity, the asymptotic properties of
the largest eigenvalues of matrices from the  deformed GOE has been
studied by various authors, where the a.e. limit, CLT and large
deviation principle are established. Similar results were also
obtained in the non Gaussian case. See \cite{fk81} (a.e. limit for
$\lambda_1 (A)$, the earliest progress on this problem),
\cite{peche06} (CLT for general $\lambda_i(A)$, Gaussian case),
\cite{fp07} (CLT for $\lambda_1 (A)$, non Gaussian case),
\cite{cmf09} (CLT for general $\lambda_i(A)$, non Gaussian case),
\cite{maida07} (large deviation for $\lambda_1(A)$, $rank(P) = 1$,
Gaussian case), \cite{gn11} (a.e. limit for general $\lambda_i (A)$,
unitary invariant case).

Another model we considered in this paper is the spiked population
model, first proposed by \cite{johnstone01}.  Here we have
independent samples drawn from a Gaussian distribution with
covariance matrix $\Sigma$ having all but a few eigenvalues equal
one. The object under study is the ``spiked eigenvalues" of the
sample covariance matrix $S_n$. If $\Sigma=I$, then $S_n$ is a
Wishart matrix. So the spiked population model can be considered as
a finite rank perturbation of the Wishart matrix ensemble.

This model has also been extensively studied in the literature. The
asymptotic properties of the largest eigenvalues of $S_n$ were
established. The ground breaking work on this problem is
\cite{bbp05}, in which the CLT for $\lambda_i(S_n)$ was derived for
the complex Gaussian case. See also \cite{paul07} (CLT for
$\lambda_i(S_n)$, real Gaussian case), \cite{bs06} (a.e. limit for
$\lambda_i(S_n)$, non Gaussian case), \cite{karoui07} (CLT for
$\lambda_1(S_n)$, non Gaussian case), \cite{fp09} (CLT for
$\lambda_i(S_n)$, non Gaussian case).

\subsection{Main results of this paper}

Instead of considering asymptotic properties, this paper established
sharp deviation bounds for the extreme eigenvalues of matrices from
the deformed GOE and the spiked population model.

Our result about the deformed GOE is theorem \ref{GOE_Deviation}, in
which we proved
\begin{equation*}
P(|\lambda_i(A)-\lambda_{\theta_i}|\geq t)\leq C_1 e^{-C_2
nt^2/\sigma^2}
\end{equation*}
where $\lambda_i(A)$ is the $i$-th largest eigenvalue of $A=P+G$,
$\theta_i$ is the $i$-th largest eigenvalue of $P$, and
$\lambda_{\theta_i}$ is defined as
\begin{equation*}
\lambda_{\theta_i} =
\begin{cases}
\theta_i+\frac{\sigma^2}{\theta_i}  &  \quad \text{if $\theta_i > \sigma$,}\\
           2\sigma              &  \quad \text{if $0 < \theta_i \leq \sigma$.}\\
\end{cases}
\end{equation*}
Theorem \ref{GOE_Deviation} assumes that $P$ has only nonnegative
eigenvalues. A similar result for the smallest eigenvalues of $A$
holds when $P$ has negative eigenvalues.

Our results about the spiked population model are divided into two
parts. Theorem \ref{SPM_Deviation} established deviation bounds for
the largest eigenvalues, and theorem \ref{SPM_Deviation_2}
established deviation bounds for the smallest eigenvalues. We can
summarize these two theorems as the following. Let $\theta^2$ be an
eigenvalue of the population covariance matrix $\Sigma$,
$\theta^2\neq 1$. Then the corresponding ``spiked eigenvalue"
$\lambda(S_n)$ of the sample covariance matrix will satisfy
\begin{equation*}
P(|\lambda(S_n)-\lambda_{\theta,c}|\geq t)\leq C_1 e^{-C_2 nt^2}
\end{equation*}
where $\lambda_{\theta,c}$ is defined as
\begin{equation*}
\lambda_{\theta,c} =
\begin{cases}
\theta^2 + c\cdot\frac{\theta^2}{\theta^2 - 1} &\quad \text{if $\theta^2>1+\sqrt{c}$, or $c<1, \theta^2 < 1-\sqrt{c}$,}\\
(1+\sqrt{c})^2 &\quad\text{if $1 < \theta^2 \leq 1 + \sqrt{c}$,}\\
(1-\sqrt{c})^2 & \quad\text{if $c < 1, 1-\sqrt{c}\leq \theta^2 <1$.}\\
\end{cases}
\end{equation*}

Unlike the traditional approach, our method does not involve the use
moment method, Stieltjes transform, or the joint density formula for
eigenvalues. Instead, we use the min-max characterization of
eigenvalues and concentration of measure for Gaussian processes to
prove the upper tail bound for the largest eigenvalues, and use
explicit construction of eigenvectors to prove the lower tail bound.

In the existing literature, the study of the deformed GOE and the
spiked population model require completely different techniques, see
\cite{peche06} and \cite{paul07}. Our method has the advantage of
treating these two models the same way. Once the basic idea is
understood, the proof of these three theorems are almost identical.
See section 4 for an outline of proof.

\section{Notation}
$x\in\mathbb{R}^n$ is considered as a column vector, $x^*$ is the
transpose, $x_j$ is the $j$-th coordinate, $|x| = \sqrt{\sum_{i=1}^n
x_i^2}$ is the Euclidean norm. For $x,y\in\mathbb{R}^n$, let
$(x,y)=\sum_{j=1}^n x_j y_j$, $x\perp y$ means $(x,y) = 0$. $S^{n-1}
= \{x\in \mathbb{R}^n: |x|=1\}$. A metric space will be written as
$(X,d)$ where $d$ is the metric. For example, $(S^{n-1},|\cdot|)$ is
$S^{n-1}$ with the Euclidean metric. A metric will be specified
whenever we discuss $\epsilon$-net.

$\mathbb{R}^{p\times n}$ is the set of all $p\times n$ real
matrices. $\mathbb{R}^{n\times n}_{sym}$ is the set of all $n\times
n$ real symmetric matrices. For $A\in\mathbb{R}^{p\times n}$,
$\|A\|$ is the largest singular value, $A^*$ is the transpose.
$E_{i,j}$ is the matrix with $1$ on the $(i,j)$ entry and $0$
elsewhere, the ambient dimension will be clear whenever we use this
notation. The Gaussian Orthogonal Ensemble is defined as
\begin{align*}
GOE(n,\frac{\sigma^2}{n}) =& \{A\in\mathbb{R}^{n\times n}_{sym}: a_{i,j},1\leq i\leq j\leq n, \text{are independent};\\
&\quad a_{i,i}\sim \mathcal{N}(0,\frac{2\sigma^2}{n});a_{i,j}\sim\mathcal{N}(0,\frac{\sigma^2}{n}),i < j\}
\end{align*}
where $\mathcal{N}(\mu,\sigma^2)$ denotes the Gaussian distribution with mean $\mu$ and variance $\sigma^2$.

The size of a finite set $A$ will be denoted by $|A|$. For $a,b\in\mathbb{R},a\vee b = \max\{a,b\}.$

\section{Statement of Main Results}

This section contains our main results. Theorem \ref{GOE_Deviation}
is our result about the deformed GOE. Theorem \ref{SPM_Deviation}
and theorem \ref{SPM_Deviation_2} are our results about the spiked
population model.

\begin{theorem}\label{GOE_Deviation}
Let $A = P + G$, $G\in GOE(n,\frac{\sigma^2}{n})$,
$P\in\mathbb{R}^{n\times n}_{sym}$ has rank $r$ with eigenvalues
$\theta_1 \geq \cdots \geq \theta_r > 0$. Let $\lambda_1
(A)\geq\cdots\geq\lambda_n (A)$ be the eigenvalues of $A$. Define
\begin{equation}\label{GOE_Lambda_Theta}
\lambda_{\theta} =
\begin{cases}
\theta+\frac{\sigma^2}{\theta}  &  \quad \text{if $\theta > \sigma$,}\\
           2\sigma              &  \quad \text{if $0 < \theta \leq \sigma$.}\\
\end{cases}
\end{equation}

(i):  Let $C_{1,\theta}(n) =  \frac{2t(\lambda_{\theta} +
t)}{\sigma^2}\cdot n$, $C_{m,\theta}(n) = 2m C_{1,\theta}(n) (1 +
\frac{C_{1,\theta}(n)}{m-1})^{m-1}, m \geq 2$. When $r>0$, we have
\begin{equation}\label{GOE_Bound}
P(\lambda_i (A) \geq \lambda_{\theta_i} + t)\leq 2 C_{r-i+1,\theta_i} (n)\cdot e^{-\frac{(1-\delta)^2 nt^2}{4\sigma^2}}
\end{equation}
for $1\leq i \leq r$, $t\geq\frac{\sqrt{2(r-i+1)}\sigma}{\sqrt{\delta(1-\delta)n}}$, $0 < \delta \leq \frac{1}{2}$. When $r=0$, we have
\begin{equation}\label{GOE_Bound_Null_Case}
P(\lambda_1 (A)\geq 2\sigma + t)\leq e^{-\frac{nt^2}{4\sigma^2}},\quad t\geq 0
\end{equation}

(ii):  Let $r_0$ be the number of $\theta_i$ larger than $\sigma$.
If $r_0 > 0$, then for $1\leq i\leq r_0$, $t\geq 0$, we have
\begin{equation}\label{GOE_Reverse_Bound}
P(\lambda_i(A)\leq \lambda_{\theta_i}-t-C_1 \sigma r/ n)\leq
e^{-\frac{(n-r)(\theta_i - \sigma)^4}{16\sigma^2 \theta_i^2}} +
8i\cdot e^{-\frac{C_2 (n-r)(\theta_i - \sigma)^5 t^2}{\sigma^4
(\theta_i + \sigma)^3}}
\end{equation}
where $C_1,C_2$ are two positive constants. (We can pick
$C_1=2,C_2=0.25$)
\end{theorem}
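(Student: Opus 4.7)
The plan is to invoke Courant--Fischer together with a Schur-complement reduction. For the upper tail (i), let $u_1,\ldots,u_r$ be the eigenvectors of $P$ with eigenvalues $\theta_1\geq\cdots\geq\theta_r>0$, and restrict to the deterministic $(n-i+1)$-dimensional subspace $V_0=\mathrm{span}\{u_1,\ldots,u_{i-1}\}^{\perp}$. In a basis adapted to $U\oplus U^{\perp}$ with $U=[u_1,\ldots,u_r]$, the orthogonal invariance of the GOE yields three independent Gaussian blocks: an $(r-i+1)\times(r-i+1)$ block $\widetilde G_{11}$, an off-diagonal block $\widetilde G_{12}$, and the $(n-r)\times(n-r)$ block $G_{22}$. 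For a unit $x=U_iy+z\in V_0$ (with $U_i=[u_i,\ldots,u_r]$, $y\in\mathbb{R}^{r-i+1}$, $z\in U^{\perp}$), completing the square in $z$ for any $\lambda>\|G_{22}\|$ reduces $\lambda_i(A)\leq\lambda$ to the random matrix inequality $M(\lambda):=\widetilde G_{11}+\widetilde G_{12}(\lambda I-G_{22})^{-1}\widetilde G_{12}^{*}\preceq(\lambda-\theta_i)I_{r-i+1}$. The Stieltjes-transform fixed-point $\lambda=1/m+\sigma^2 m$ for the semicircle gives $m=1/\theta_i$ at $\lambda=\lambda_{\theta_i}$ when $\theta_i>\sigma$ and $m=1/\sigma$ at $\lambda=2\sigma$ when $\theta_i\leq\sigma$; in either regime $\mathbb{E}[M(\lambda_{\theta_i})]\approx(\lambda_{\theta_i}-\theta_i)I$, so the desired PSD inequality is exactly marginal at the BBP threshold, and this is why the two branches of $\lambda_\theta$ appear naturally.

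To convert this marginal inequality into the stated tail at $\lambda=\lambda_{\theta_i}+t$, I condition on $G_{22}$. For each fixed unit $y\in\mathbb R^{r-i+1}$, $y^{*}M(\lambda)y$ becomes a shifted weighted $\chi^2$ in the independent Gaussian matrix $\widetilde G_{12}$; its tail is controlled by Laurent--Massart together with Gaussian concentration of the Lipschitz functional $\tfrac{1}{n}\mathrm{tr}(\lambda I-G_{22})^{-1}$ and the classical edge estimate $\|G_{22}\|\leq 2\sigma+o(1)$, giving a pointwise bound of the form $P(y^{*}M(\lambda)y\geq\lambda-\theta_i)\leq\exp(-(1-\delta)^2 nt^2/(4\sigma^2))$. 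An $\epsilon$-net on $S^{r-i}\subset\mathbb R^{r-i+1}$, with $\epsilon$ balanced against the (random) Lipschitz constant of $y\mapsto y^{*}M(\lambda)y$, converts this to a uniform bound with net cardinality of order $(1+C_{1,\theta_i}(n))^{r-i+1}$, producing exactly the prefactor $C_{r-i+1,\theta_i}(n)$ in \eqref{GOE_Bound}; the hypothesis $t\geq\sqrt{2(r-i+1)}\,\sigma/\sqrt{\delta(1-\delta)n}$ is precisely what is needed to absorb this cardinality into the exponential. The case $r=0$ in \eqref{GOE_Bound_Null_Case} is the classical Gaussian concentration of $\lambda_1(G)$ around $2\sigma$.

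For part (ii) the Courant--Fischer direction is reversed and I construct explicit approximate eigenvectors $v_j:=u_j+(\lambda_{\theta_j}I-G_{22})^{-1}\widetilde G_{12}^{*}u_j$ for $1\leq j\leq i\leq r_0$, motivated directly by the Schur derivation in (i). A direct expansion gives $v_j^{*}Av_j/|v_j|^2\approx\lambda_{\theta_j}$ with random deviations, off-diagonal Gram entries $v_j^{*}v_k$, and cross-energies $v_j^{*}Av_k$ all controlled by the same $\chi^2$ and resolvent-trace concentration used in part (i); this produces the second exponential in \eqref{GOE_Reverse_Bound}. The deterministic correction $C_1\sigma r/n$ absorbs the finite-$n$ gap between $\tfrac{1}{n}\mathrm{tr}(\lambda I-G_{22})^{-1}$ and its semicircle limit, and the first exponential in \eqref{GOE_Reverse_Bound} bounds the bad event where $\|G_{22}\|$ comes too close to $\lambda_{\theta_i}$ for the resolvent construction to be usable, the critical gap $\lambda_{\theta_i}-2\sigma=(\theta_i-\sigma)^2/\theta_i$ explaining the particular form of the exponent. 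Applying Courant--Fischer to $\mathrm{span}(v_1,\ldots,v_i)$ then yields $\lambda_i(A)\geq\min_{j\leq i}v_j^{*}Av_j/|v_j|^2$ up to off-diagonal errors.

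The principal obstacle in both parts is a uniformity/sharpness trade-off. In (i), preserving the constant $1/(4\sigma^2)$ in the exponent through the $\epsilon$-net union bound requires careful balancing of the $\chi^2$ tail rate, the Lipschitz constant of $y\mapsto y^{*}M(\lambda)y$, and the net cardinality so that the polynomial prefactor appears only as the multiplicative factor $C_{r-i+1,\theta_i}(n)$ and does not degrade the Gaussian exponent. In (ii) the analogous difficulty is uniformly controlling the off-diagonal Gram and cross-energy terms across all pairs $j\neq k$ so that $\mathrm{span}(v_1,\ldots,v_i)$ is genuinely $i$-dimensional and the min-max lower bound remains sharp.
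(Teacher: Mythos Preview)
For part~(ii) your resolvent construction of approximate eigenvectors is essentially the paper's proof (the paper fixes $x_i=\sqrt{1-\sigma^2/\theta_i^2}$ and normalizes the resolvent piece so that $|x|=1$, but the analysis via concentration of $v^*Rv$ and $v^*R^2v$ around $-1/\theta_i$ and $1/(\theta_i^2-\sigma^2)$ is the same), so that half is fine.

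For part~(i) your route diverges from the paper's, and the asserted pointwise bound is where the gap lies. The paper does \emph{not} pass through a Schur complement for the upper tail. It works directly with the Gaussian process $X_x=x^*Gx$ on $S^{n-1}$, stratifies only by $u=(x_i,\ldots,x_r)\in B_2^{r-i+1}$, and on each slice uses Slepian's inequality to compare $\{X_x\}$ with a linear process $Y_x=\tfrac{2\sigma}{\sqrt n}\sum_{j>r}x_j\omega_j$, obtaining $E[\max_{\mathscr E(u)}X_x]\leq \theta_i|u|^2+2\sigma\sqrt{1-|u|^2}\leq\lambda_{\theta_i}$. The crucial feature is that $X_x$ is \emph{linear} in $G$, hence exactly Gaussian with $\mathrm{Var}(X_x)=2\sigma^2/n$ for \emph{every} unit $x$; Gaussian concentration of the supremum then delivers the exponent $nt^2/(4\sigma^2)$ uniformly in $u$ and in $\theta_i$, with no dependence whatsoever on the gap $\lambda_{\theta_i}-2\sigma$. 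The $\epsilon$-net is only over the low-dimensional parameter $u$.

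Your reduction replaces $X_x$ by $y^*M(\lambda)y=y^*\widetilde G_{11}y+(\widetilde G_{12}^*y)^*(\lambda I-G_{22})^{-1}(\widetilde G_{12}^*y)$, which is nonlinear in $G_{22}$. Conditionally on $G_{22}$ this is (Gaussian)$+$(weighted $\chi^2$) with weights $a_j=\sigma^2/(n(\lambda-\mu_j))$, and the Laurent--Massart rate is governed by $|a|_2^2\asymp\sigma^4 g'(\lambda)/n$ and $a_{\max}\asymp\sigma^2/(n(\lambda-\|G_{22}\|))$, not by the uniform $2\sigma^2/n$. Consequently your claim $P(y^*M(\lambda)y\geq\lambda-\theta_i)\leq\exp\bigl(-(1-\delta)^2nt^2/(4\sigma^2)\bigr)$ does not follow from the ingredients you list: the conditional variance of $y^*M(\lambda)y$ is $\tfrac{2\sigma^2}{n}(1+\sigma^2 g'(\lambda))>\tfrac{2\sigma^2}{n}$, and in the subcritical regime $\theta_i\leq\sigma$ (where $\lambda_{\theta_i}=2\sigma$) both $g'(\lambda)$ and $a_{\max}$ diverge as $\lambda\downarrow 2\sigma$, so the resolvent is ill-conditioned on precisely the event you need. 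A Schur/$\chi^2$ argument can presumably be pushed through to give \emph{some} exponential bound, with constants depending on $\theta_i-\sigma$ in the supercritical case and a different shape in $t$ near the threshold, but not the uniform constant $1/(4\sigma^2)$ stated in \eqref{GOE_Bound}. The clean fix is to drop the Schur complement for part~(i) and argue via Slepian on the linear process $x^*Gx$, reserving the resolvent machinery for part~(ii) only.
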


We assumed $\theta_i>0$ for simplicity; theorem \ref{GOE_Deviation}
holds with trivial modification when $P$ has both positive and
negative eigenvalues.

Part (ii) of theorem \ref{GOE_Deviation} provides a lower tail bound
for $\lambda_i(A)$ only when $\theta_i > \sigma$. When
$\theta_i\leq\sigma$, we can use the semicircle law to get a lower
tail bound for $\lambda_i(A)$, this is intuitively clear: the
interval $[2\sigma-\epsilon,2\sigma]$ should contain about $\epsilon
n$ eigenvalues. See \cite{akv02} for a rigorous derivation. Our
result shows that $\lambda_i(A)$ will not exit the semicircle law
band when $\theta_i\leq\sigma$.

Theorem \ref{GOE_Deviation} essentially says, when $r$ is small, we
have $\lambda_i(A)\approx\lambda_{\theta_i},1\leq i\leq r$. As a
consequence, for fixed $r$, we have
$\lambda_i(A)\rightarrow\lambda_{\theta_i},n\rightarrow\infty$, and
the fluctuation of $\lambda_i(A)$ is of order $\frac{1}{\sqrt{n}}$.

We can also allow $r$ to grow with $n$; for example, if
$r=o(\frac{n}{\log n})$, we still have
$\lambda_i(A)\rightarrow\lambda_{\theta_i},n\rightarrow\infty$. This
can be derived by using our deviation bound and the Borel-Cantelli
lemma. The a.e. convergence of $\lambda_i(A)$ when $r$ grows like
$o(\frac{n}{\log n})$ can not be derived by existing methods in the
literature.

\begin{theorem}\label{SPM_Deviation}
$\Sigma\in\mathbb{R}^{p\times p}_{sym}$,
$\Sigma\sim\diag\{\theta_1^2,\cdots,\theta_{r+s}^2,1,\cdots,1\}$,
$\theta_1\geq\cdots\geq \theta_r > 1 >
\theta_{r+1}\geq\cdots\geq\theta_{r+s}>0$. Let
$G\in\mathbb{R}^{p\times n}$ with entries $g_{i,j}$ being i.i.d.
$\mathcal{N}(0,1)$. Consider the sample covariance matrix $S_n =
\frac{1}{n}(\Sigma^{\frac{1}{2}}G)(\Sigma^{\frac{1}{2}}G)^*$, let
$\lambda_1 (S_n)\geq\cdots\geq\lambda_p (S_n)$ be its eigenvalues.
For $c\geq 0$,define
\begin{equation}\label{SPM_Lambda_Theta_C}
\lambda_{\theta,c} =
\begin{cases}
\theta^2 + c\cdot\frac{\theta^2}{\theta^2 - 1} &\quad \text{if $\theta^2>1+\sqrt{c}$, or $c<1, \theta^2 < 1-\sqrt{c}$,}\\
(1+\sqrt{c})^2 &\quad\text{if $1 < \theta^2 \leq 1 + \sqrt{c}$,}\\
(1-\sqrt{c})^2 & \quad\text{if $c < 1, 1-\sqrt{c}\leq \theta^2 <1$.}\\
\end{cases}
\end{equation}

(i):  Let $c = \frac{p-r}{n}$, $C_{0,\theta}(n) = 1$, $C_{1,\theta}
(n) = \frac{2t(\sqrt{\lambda_{\theta,c}}+t)}{\theta^2}\cdot n$,
$C_{m,\theta}(n) = 2m C_{1,\theta}(n) (1 +
\frac{C_{1,\theta}(n)}{m-1})^{m-1}, m \geq 2$. When $r>0$, we have
\begin{equation}\label{SPM_Bound_1}
P(\sqrt{\lambda_i (S_n)}\geq \sqrt{\lambda_{\theta_i,c}} + t) \leq C_{r-i+1,\theta_i}(n)\cdot e^{-\frac{(1-\delta)^2 nt^2}{2\theta_i^2}}
\end{equation}
for $1\leq i\leq r$, $t\geq\frac{\sqrt{r-i+1}\theta_i}{\sqrt{\delta
(1-\delta)n}}$, $0<\delta \leq\frac{1}{3}$. When $r=0$, we have
\begin{equation}\label{SPM_Bound_1_Null_Case}
P(\sqrt{\lambda_1 (S_n)}\geq 1 + \sqrt{p/n} + t)\leq e^{-\frac{nt^2}{2}},\quad t\geq 0
\end{equation}

(ii): Let $r_0$ be the number of $\theta_i^2$ larger than $1 +
\sqrt{c}$, $c=\frac{p-r}{n}$. If $r_0 > 0$, then for $1\leq i \leq
r_0$, $t\geq 0$, we have
\begin{equation}\label{SPM_Reverse_Bound_1}
P(\sqrt{\lambda_i (S_n)} \leq \sqrt{\lambda_{\theta_i,c}} - t - C_1
\theta_1 r/n) \leq C_2 i\cdot e^{-C_3 nt^2}
\end{equation}
where $C_1, C_2$ are two positive constants and $C_3$ is a positive
real number depending on $\theta_1$ and $c$.
\end{theorem}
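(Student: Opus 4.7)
The plan is to mirror the strategy outlined in Section~1 and used in Theorem~\ref{GOE_Deviation}: combine the Courant--Fischer min--max identity with Gaussian concentration for the upper tail (part~(i)), and construct an explicit $i$-dimensional test subspace of approximate eigenvectors for the lower tail (part~(ii)).

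\textbf{Part (i): upper tail.}
Since $\sqrt{\lambda_i(S_n)}$ is the $i$-th largest singular value of $\Sigma^{1/2}G/\sqrt{n}$, the min--max identity gives
\[
\sqrt{\lambda_i(S_n)} \;=\; \min_{\dim W = p-i+1}\ \max_{u \in W,\,|u|=1}\ \frac{1}{\sqrt{n}}\bigl|G^{\adj}\Sigma^{1/2}u\bigr|.
\]
Take the witness subspace $W^{\ast} = \operatorname{span}\{e_i, e_{i+1}, \ldots, e_p\}$ and decompose any unit vector $u \in W^{\ast}$ as $u = \alpha s + \beta w$, where $s$ is on the unit sphere of the ``top-spike'' subspace $\operatorname{span}\{e_i, \ldots, e_r\}$ (of dimension $m = r-i+1$), $w$ is on the unit sphere of the complementary subspace $\operatorname{span}\{e_{r+1}, \ldots, e_p\}$, and $\alpha^2+\beta^2 = 1$. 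For each fixed $s$, the inner maximum over $(\alpha,\beta,w)$ is a Lipschitz functional of $G$ whose mean matches $\sqrt{\lambda_{\theta_i,c}}$ up to lower-order terms; Gaussian concentration of measure bounds its deviation at each individual $s$ by $\exp(-Cnt^2/\theta_i^2)$. An $\epsilon$-net on $S^{m-1}$ of cardinality $(1+2/\epsilon)^m$, a Lipschitz estimate for the discretization, and a union bound then yield~(\ref{SPM_Bound_1}). The optimal choice $\epsilon \sim t/(\sqrt{\lambda_{\theta_i,c}}+t)$ produces the stated prefactor $C_{m,\theta_i}(n)$. The null case $r=0$ in (\ref{SPM_Bound_1_Null_Case}) is a classical Gaussian-concentration bound on $\|G\|/\sqrt{n}$.

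\textbf{Part (ii): lower tail.}
Using the dual
\[
\sqrt{\lambda_i(S_n)} \;=\; \max_{\dim V = i,\, V\subset\R^p}\ \min_{u \in V,\,|u|=1}\ \frac{1}{\sqrt{n}}\bigl|G^{\adj}\Sigma^{1/2}u\bigr|,
\]
build an explicit $i$-dimensional test subspace $V^{\ast} = \operatorname{span}\{v^{(1)},\ldots,v^{(i)}\}$. Block-decompose $G$ as a stack of the $(r+s)\times n$ spike block $G_S$ and the $(p-r-s)\times n$ pure-noise block $G_N$. For each $j=1,\ldots,i$ (so $\theta_j^2 > 1+\sqrt{c}$), let $g_j \in \R^n$ be the $j$-th row of $G_S$, embed $G_N g_j \in \R^{p-r-s}$ into the noise-coordinate subspace of $\R^p$ (call the embedded vector $y^{(j)}$), and take
\[
v^{(j)} = \alpha_j\, e_j + \beta_j\, \frac{y^{(j)}}{|y^{(j)}|},
\]
with $(\alpha_j,\beta_j)$ tuned to match the asymptotic spike-vs.-noise weights of the true top eigenvector. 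A direct computation shows $v^{(j)\adj}S_n v^{(j)}/|v^{(j)}|^2 = \lambda_{\theta_j,c} + o(1)$, with Gaussian concentration at rate $e^{-Cnt^2}$ around the mean. The independence of distinct rows $g_j$ makes off-diagonal Gram entries $(v^{(j)},v^{(k)})$ of order $1/\sqrt{n}$, so $V^{\ast}$ is $i$-dimensional and the minimum Rayleigh quotient over $V^{\ast}$ exceeds $\lambda_{\theta_i,c} - t - C_1\theta_1 r/n$ on the intersection of the good events. The factor $i$ comes from a union bound over the $i$ single-vector concentrations, and the $O(r/n)$ bias absorbs the cumulative effect of the other spikes through the cross Gram entries and the noise-block operator norm.

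\textbf{Main obstacle.}
The delicate step is in part~(ii): tuning $v^{(j)}$ so that its mean Rayleigh quotient is exactly $\lambda_{\theta_j,c}$, in particular reproducing the specific shift $c\theta_j^2/(\theta_j^2-1)$ in~(\ref{SPM_Lambda_Theta_C}). This requires a careful second-moment analysis of $|G_N g_j|^2/n$ and of the quadratic form $(G_N g_j)^{\adj}(G_N G_N^{\adj}/n)(G_N g_j)/|G_N g_j|^2$, together with tracking how the Wishart fluctuation $G_N G_N^{\adj}/n - I$ propagates into the Rayleigh quotient, all without appeal to the Stieltjes transform. In part~(i), the matching technical challenge is to propagate sharp constants through the joint $\alpha$--$\beta$ optimization and the $\epsilon$-net union bound to recover the exponent $(1-\delta)^2 nt^2/(2\theta_i^2)$ with the precise prefactor $C_{m,\theta_i}(n)$.
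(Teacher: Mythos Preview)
Your high-level strategy (Courant--Fischer plus Gaussian concentration for (i), explicit test vectors for (ii)) matches the paper, but both parts have a genuine gap in the implementation.

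\textbf{Part (i).} You stratify by the \emph{direction} $s\in S^{m-1}$ of the spike component and then maximize over $(\alpha,\beta,w)$ for each fixed $s$. The paper instead stratifies by the full spike-coordinate vector $u=(x_i,\dots,x_r)\in B^{m}_2$, so that on each stratum $\mathscr{E}_i(u)$ both the direction \emph{and} the magnitude $\alpha$ are frozen. This is not cosmetic: the key step is the bound $E[L_{i,u}]\le\varphi(u)\le\sqrt{\lambda_{\theta_i,c}}$ on the mean, and that bound is obtained via Slepian's lemma (Proposition~\ref{Slepian_1}). The covariance comparison \eqref{SPM_Slepian_Comparison_1} relies on $x,\tilde x$ having identical spike coordinates; if $\alpha$ is allowed to vary within a stratum the increment picks up a term $|\alpha y-\tilde\alpha\tilde y|^2$ and the Slepian inequality no longer holds. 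You never say how the mean is controlled---``Lipschitz plus concentration'' handles only the fluctuation---so this is the missing idea. Correspondingly, the $\epsilon$-net has to live on $(B^m_2,\rho_m)$ (Lemmas~\ref{Epsilon_Net_Construction_1}--\ref{Epsilon_Net_Construction_2}), with the extra radial $1/\epsilon$ factor, not on $S^{m-1}$.

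\textbf{Part (ii).} Your ansatz $v^{(j)}=\alpha_j e_j+\beta_j\,G_N g_j/|G_N g_j|$ cannot reach $\lambda_{\theta_j,c}$ for any choice of $(\alpha_j,\beta_j)$. With $W=G_N^*G_N/n$ and $h=g_j$, one gets $h^*Wh/n\approx c$ and $h^*W^2h/(h^*Wh)\approx 1+c$, so the Rayleigh quotient is $\approx\alpha^2\theta_j^2+2\alpha\beta\theta_j\sqrt{c}+\beta^2(1+c)$, whose maximum over $\alpha^2+\beta^2=1$ is the top eigenvalue of $\bigl(\begin{smallmatrix}\theta_j^2&\theta_j\sqrt c\\ \theta_j\sqrt c&1+c\end{smallmatrix}\bigr)$; this is strictly below $\theta_j^2+c\theta_j^2/(\theta_j^2-1)$ whenever $\theta_j^2>1+\sqrt c$ (e.g.\ $\theta_j^2=4$, $c=1$ gives $3+\sqrt5<16/3$). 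The paper's construction inserts the resolvent: the noise part of the test vector is proportional to $R\tilde G v$ with $R=(\tilde G\tilde G^*-\lambda_{\theta_j,c}I)^{-1}$, coming from the Schur-complement heuristic for an exact eigenvector. The resulting Rayleigh quotient (see \eqref{SPM_Lambda_Minus_xSx}) involves $L_1=v^*Sv$ and $L_2=v^*S^2v$ with $S=(\tilde G^*\tilde G-\lambda I)^{-1}$, and these are pinned down precisely by the Stieltjes transform of the Marchenko--Pastur law: $L_1\approx g(\lambda)=-1/\theta_j^2$, $L_2\approx g'(\lambda)$. So the Stieltjes transform is not avoided; together with the resolvent it is exactly what makes the limit come out to $\lambda_{\theta_j,c}$.
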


Theorem \ref{SPM_Deviation} describes the relationship between the
largest sample eigenvalues and the largest population eigenvalues.
Loosely speaking, the population eigenvalue $\theta^2$ can be
estimated by solving the equation
\begin{equation*}
\hat{\theta}^2 + c\cdot \frac{\hat{\theta}^2}{\hat{\theta}^2-1} =
\lambda(S_n)
\end{equation*}
if we observe an sample eigenvalue $\lambda(S_n) > (1+\sqrt{c})^2$;
and those population eigenvalues $\leq 1+\sqrt{p/n}$ are not
estimable from the sample covariance matrix. We also have
$\lambda_i(S_n)\rightarrow\lambda_{\theta_i,c},n\rightarrow\infty$
when $r=o(\frac{n}{\log n})$.

It's worth noting that the bounds we derived in theorem
\ref{SPM_Deviation} does not depend on population eigenvalues that
are smaller than one. This is important in applications with
heteroscedasticity. Suppose we have $n$ observations on the
variables $R_1,\cdots, R_p$ and we believe that they are driven by a
small number of principle components, i.e.
\begin{equation*}
R_i (t) = \beta_{i,1}P_1 (t) + \cdots + \beta_{i,r}P_r (t) +
\epsilon_i(t),\quad t=1,\cdots,n
\end{equation*}
Even $var(\epsilon_i)=\sigma_i^2$ are not equal, we can still
estimate the coefficients $\beta_{i,k}$ and $var(P_k)$ reliably
using the sample covariance matrix. We can estimate
$\frac{1}{p}\Sigma\sigma_i^2\approx \hat{\sigma}^2$ and simply
pretend that $\sigma_i^2=\hat{\sigma}^2$.

The proof of part (i) of theorem \ref{SPM_Deviation} was inspired by
\cite{gordon85}, in which \eqref{SPM_Bound_1_Null_Case} was proved.
\cite{gordon85} does not discuss random matrices explicitly, see
\cite{ds01} for a discussion of the results of \cite{gordon85} in
terms of random matrices.

The next theorem is about the smallest eigenvalues of the sample
covariance matrix in the spiked population model.

\begin{theorem}\label{SPM_Deviation_2}
We use the same notation as in theorem \ref{SPM_Deviation}.

(i): Assume $n>p$. Let $c = \frac{p-r}{n},c'=\frac{p-r-s}{n}$. Let
$C'_i (n) = C_{r+s-i+1,\theta_1\vee 1}(n) + C_{r,\theta_1\vee
1}(n)$. When $s>0$, we have
\begin{equation}\label{SPM_Bound_2}
P(\sqrt{\lambda_{p-i+1}(S_n)}\leq
\sqrt{\lambda_{\theta_{r+s-i+1},c'}}-\frac{\theta_1\vee 1}{2n} -t)
\leq C'_i(n) e^{-\frac{(1-\delta)^2 nt^2}{2(\theta_1^2\vee 1)}}
\end{equation}
for $1\leq i\leq s$, $t\geq\frac{\sqrt{r+s-i+1}(\theta_1\vee
1)}{\sqrt{\delta(1-\delta)n}}$, $0 < \delta\leq\frac{1}{3}$. When
$s=0,r>0$ we have
\begin{equation}\label{SPM_Bound_2_Null_Case_1}
P(\sqrt{\lambda_p (S_n)}\leq 1 -\sqrt{c'}
-\frac{\theta_1}{2n}-t)\leq 2
C_{r,\theta_1}(n)e^{-\frac{(1-\delta)^2 n t^2}{2\theta_1^2}}
\end{equation}
for
$t\geq\frac{\sqrt{r}\theta_1}{\sqrt{\delta(1-\delta)n}},0<\delta\leq\frac{1}{3}$.
When $s=0,r=0$, we have
\begin{equation}\label{SPM_Bound_2_Null_Case_2}
P(\sqrt{\lambda_p (S_n)}\leq 1 - \sqrt{p/n}-t)\leq
e^{-\frac{nt^2}{2}},\quad t\geq 0
\end{equation}

(ii): Let $s_0$ be the number of $\theta_i^2$ smaller than
$1-\sqrt{c'}$, $c'=\frac{p-r-s}{n}$. If $s_0 > 0$, then for $1\leq
i\leq s_0$, $t\geq 0$, we have
\begin{equation}\label{SPM_Reverse_Bound_2}
P(\sqrt{\lambda_{p-i+1}(S_n)}\leq
\sqrt{\lambda_{\theta_{r+s-i+1},c'}} - t - C_1 \theta_1 (r+s)/n)\leq
C_2 i \cdot e^{- C_3 nt^2}
\end{equation}
where $C_1,C_2$ are two positive constants, and $C_3$ is a positive
real number depending on $\theta_{1}$ and $c'$.
\end{theorem}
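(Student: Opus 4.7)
The plan is to follow the template already used for Theorems~\ref{GOE_Deviation} and~\ref{SPM_Deviation}, adapted to the smallest singular values of $M := \Sigma^{1/2}G/\sqrt{n}$, since $\sqrt{\lambda_{p-i+1}(S_n)} = \sigma_{p-i+1}(M)$. As the author notes in the introduction, the three theorems admit essentially the same proof structure: Gaussian process comparison plus an $\epsilon$-net for one tail, and explicit eigenvector construction for the other.

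For part~(i), I would use the Courant--Fischer max-min formula
\[
\sigma_{p-i+1}(M) \;=\; \max_{\dim V = p-i+1}\,\min_{y\in V,\,\|y\|=1}\,\|M^{*}y\|,
\]
and plug in $V_I = \mathrm{span}\{e_k : k\in I\}$ with $I=\{1,\dots,r+s-i+1\}\cup\{r+s+1,\dots,p\}$, i.e.\ drop the $i-1$ coordinate axes of the smallest population spikes $\theta_{r+s-i+2},\dots,\theta_{r+s}$. This gives the interlacing bound $\sigma_{p-i+1}(M)\geq\sigma_{\min}(M_I)$, where $M_I$ is itself a spiked Gaussian matrix whose smallest spike is $\theta_{r+s-i+1}$ and whose noise ratio is still $c'=(p-r-s)/n$. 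I would then express $\sigma_{\min}(M_I)=\min_{\|y\|=1}\max_{\|z\|=1}y^{*}M_I z$ and apply Gordon's min-max inequality with the comparison process $(\|y\|_{\Sigma}\langle g,z\rangle+\|z\|\langle h,\Sigma^{1/2}y\rangle)/\sqrt{n}$; the covariance calculation (parallel to the one used in the proof of Theorem~\ref{SPM_Deviation}(i), but now applied to a $\min\max$ rather than a $\max\max$) shows the comparison hypotheses hold and yields an expected min-max of at least $\sqrt{\lambda_{\theta_{r+s-i+1},c'}}$ up to a deterministic $O(1/n)$ correction that is absorbed into the $\frac{\theta_1\vee 1}{2n}$ term. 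An $\epsilon$-net on the unit sphere of $V_I$ upgrades this pointwise bound to a uniform one and produces the prefactor $C_{r+s-i+1,\theta_1\vee 1}(n)$; the extra summand $C_{r,\theta_1\vee 1}(n)$ in $C'_i(n)$ comes from a separate net over the above-one spike rows still present in $M_I$. Finally, Gaussian concentration of the $((\theta_1\vee 1)/\sqrt{n})$-Lipschitz map $G\mapsto\sigma_{\min}(M_I)$ converts the mean estimate into the stated sub-Gaussian tail, and the null cases $s=0$ reduce either to Theorem~\ref{SPM_Deviation} or to the classical Gordon bound on $\sigma_{\min}(G/\sqrt{n})$.

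For part~(ii), which is the complementary sharper deviation bound in the detached regime $\theta_{r+s-i+1}^{2}<1-\sqrt{c'}$, I would use the dual characterization
\[
\lambda_{p-i+1}(MM^{*}) \;=\; \min_{\dim W = p-i}\,\max_{y\in W^{\perp},\,\|y\|=1}\,y^{*}MM^{*}y
\]
and exhibit an explicit $i$-dimensional subspace $W^{\perp}$. For each $k\in\{r+s-i+1,\dots,r+s\}$ I would build an approximate bottom eigenvector $v_k$ of $MM^{*}$ in Neumann-series form: roughly $v_k=e_k$ plus a noise-row correction chosen so that $MM^{*}v_k\approx\lambda_{\theta_k,c'}v_k$. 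Taking $W^{\perp}=\mathrm{span}\{v_{r+s-i+1},\dots,v_{r+s}\}$ and computing the Rayleigh quotient on this subspace, with residuals controlled by $\chi^{2}$ concentration for the noise Gram matrix and standard Gaussian cross-product estimates, delivers the stated exponential tail.

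The main technical obstacle I expect is in part~(ii) in the below-bulk regime. In contrast to Theorem~\ref{SPM_Deviation}(ii), the shift $\alpha_k=\lambda_{\theta_k,c'}$ at which one must invert $G_0^{*}G_0/n-\alpha_k I$ (with $G_0$ the noise rows of $G$) lies in the spectral gap $(0,(1-\sqrt{c'})^{2})$ below the Marchenko--Pastur bulk rather than above $(1+\sqrt{c'})^{2}$. The resolvent is still bounded, but the prefactor $|\theta_k^{2}-1|^{-1}$ in $\lambda_{\theta_k,c'}$ and the sensitivity of the resolvent at $\alpha_k$ degrade as $\theta_k^{2}\uparrow 1-\sqrt{c'}$, so the constant $C_3$ in the bound depends on the distance of $\theta_k^{2}$ from $1-\sqrt{c'}$. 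Ensuring the $v_k$'s are jointly almost-orthogonal, so that cross terms $v_k^{*}MM^{*}v_{k'}$ do not inflate the Rayleigh quotient over $W^{\perp}$, is the main book-keeping task; it parallels the above-bulk case of Theorem~\ref{SPM_Deviation}(ii), with the sign of $\theta_k^{2}-1$ reversed.
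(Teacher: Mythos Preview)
Your high-level plan matches the paper's: max--min to reduce to a submatrix, Gordon's min--max comparison on strata, $\epsilon$-net over the spike coordinates, Gaussian concentration; and for part~(ii), explicit approximate eigenvectors built from the resolvent of the noise block, now inverted below the Marchenko--Pastur bulk. Part~(ii) is essentially right.

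There is one genuine gap in part~(i), namely your account of where the second summand $C_{r,\theta_1\vee 1}(n)$ in $C'_i(n)$ comes from. It does \emph{not} arise from ``a separate net over the above-one spike rows.'' The point is that the $\epsilon$-net argument for the smallest singular value is not symmetric to the one for the largest: from Lemma~\ref{Epsilon_Net_Lemma_Smallest_Singularvalue} one only gets
\[
\min_{x\in\mathscr{X}_i}\max_{y\in S^{n-1}} X_{x,y}\;\le\;\mu_i+\epsilon\sqrt{\lambda_1(S_n)},
\]
so the discretization error involves the \emph{operator norm} $\sqrt{\lambda_1(S_n)}$, not $\mu_i$ itself. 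Splitting the event accordingly, the paper controls $\min_{u\in\mathscr N_i}L_{i,u}$ by a union bound over the stratified net (this produces $C_{r+s-i+1,\theta_1\vee 1}(n)$), and separately bounds $P(\sqrt{\lambda_1(S_n)}\ge \tilde t)$ by invoking the already-proved Theorem~\ref{SPM_Deviation}(i) (this produces $C_{r,\theta_1\vee 1}(n)$). Your description misses this asymmetry, and your final sentence about applying Gaussian concentration directly to $G\mapsto\sigma_{\min}(M_I)$ compounds the confusion: if that worked with a mean bound already in hand, you would get a clean sub-Gaussian tail with no polynomial prefactor at all. In the paper, concentration is applied to each stratum quantity $L_{i,u}$, and the prefactors arise from the union bound over $\mathscr{N}_i$ together with the $\lambda_1(S_n)$ control. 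A related detail: your proposed comparison process uses $\langle h,\Sigma^{1/2}y\rangle$, whereas the paper's second Gaussian piece involves only the noise coordinates $(x_{r+s+1},\ldots,x_p)$; the stratum-by-stratum choice is what makes the expected min--max evaluate exactly to $\varphi(u)$ and hence to $\sqrt{\lambda_{\hat\theta_i,c'}}$.
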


\section{Outline of Proof}
This section explains the main idea in the $r=1$ case of theorem
\ref{GOE_Deviation}.

Consider $A=\theta E_{1,1}+G,G\in
GOE(n,\frac{\sigma^2}{n}),\theta>0$, our objective is to show
$\lambda_1(A)\approx\lambda_{\theta}$.

Let's consider the upper tail bound first. If we can prove
$E[\lambda_1 (A)]\leq \lambda_{\theta}$, then the concentration of
measure for Gaussian processes will yield the desired upper tail
bound. We begin with $\lambda_1(A)=\max_{x\in S^{n-1}}x^*Ax$, so
$\lambda_1(A)$ is the maximum of the Gaussian process $\{x^*Ax: x\in
S^{n-1}\}$; and one might consider using Slepian's lemma
(proposition \ref{Slepian_1}) to prove
$E[\lambda_1(A)]\leq\lambda_{\theta}$. However, this seems to be a
tall order.

The first key idea is to stratify $S^{n-1}$ using the first
coordinate
\begin{equation*}
\lambda_1(A)=\max_{u\in [0,1]}L_u,\quad L_u=\max_{x\in
S^{n-1},x_1=u}x^*Ax
\end{equation*}
Each $L_u$ is the maximum of a Gaussian process, and we can use
Slepian's lemma to prove
\begin{equation*}
E[L_u]\leq \theta u^2 + 2\sigma\sqrt{1-u^2}=\varphi(u)
\end{equation*}
When $\theta>\sigma$, the maximum of $\varphi(u)$ is
$\varphi(\sqrt{1-\frac{\sigma^2}{\theta^2}})=\theta+\frac{\sigma^2}{\theta}$;
when $\theta\leq\sigma$, the maximum is $\varphi(0)=2\sigma$. Thus
$E[L_u]\leq\lambda_{\theta}$ and we can apply concentration of
measure for Gaussian processes to get
$P(L_u\geq\lambda_{\theta}+t)\leq e^{-\frac{nt^2}{4\sigma^2}}$.

To get an upper tail bound for $\lambda_1(A)=\max_{u\in [0,1]}L_u$,
we have to take a union bound. The second key idea is to use an
$\epsilon$-net argument to control $\lambda_1(A)$ by finitely many
$L_u$. More precisely, if $\mathscr{X}$ is an $\epsilon$-net for
$S^{n-1}$, then
\begin{equation*}
\lambda_1(A)\leq\frac{1}{1-2\epsilon}\max_{x\in\mathscr{X}}|x^*Ax|
\end{equation*}
We will use a special $\epsilon$-net
\begin{equation*}
\mathscr{X}=\cup_{u\in\mathscr{N}}\mathscr{E}(u),\quad
\mathscr{E}(u) = \{x\in S^{n-1}: x_1=u\}
\end{equation*}
where $\mathscr{N}$ is a finite subset of $[0,1]$ whose size depend
on $\epsilon$. Then we can use $P(L_u\geq\lambda_{\theta}+t)\leq
e^{-\frac{nt^2}{4\sigma^2}}$ and
\begin{equation*}
\lambda_1(A) \leq\frac{1}{1-2\epsilon}\max_{u\in\mathscr{N}}L_u
\end{equation*}
to build an upper tail bound for $\lambda_1(A)$. The final step is
optimizing over $\epsilon$ to get the best bound ($\epsilon$ should
be of order $\frac{1}{n}$).

Now, let's consider the lower tail bound for $\lambda_1(A)$. We want
to construct an $x\in S^{n-1}$ with $x^* Ax\approx\lambda_{\theta}$.
(To be precise, we want a bound for $P(x^*Ax\leq
\lambda_{\theta}-t)$) Consider
\begin{equation*}
(\theta E_{1,1}+G)x = \lambda_{\theta}x, x\in S^{n-1}
\end{equation*}
Let $\tilde{G}$ be the lower right $(n-1)\times(n-1)$ submatrix of
$G$, $v = (g_{2,1},\cdots,g_{n,1})^t$, $\tilde{x} =
(x_2,\cdots,x_n)^t$, then the above equation becomes
\begin{equation*}
\theta x_1 + v^* \tilde{x} = \lambda_{\theta}x_1,\quad x_1
v+\tilde{G}\tilde{x}=\lambda_{\theta}\tilde{x}
\end{equation*}
Thus $\tilde{x} = -x_1(\tilde{G}-\lambda_{\theta}I)^{-1}v$. Of
course, such an $x$ might not be in existence at all, since
$\lambda_{\theta}$ might not be an eigenvalue of $A$. However, this
heuristic ``Schur complement" argument suggests a way to construct
approximate eigenvectors for $A$. When $\theta
> \sigma$, we know the correct value for $x_1$ is
$\sqrt{1-\frac{\sigma^2}{\theta^2}}$ (since $\varphi(u)$ attains its
maximum $\lambda_{\theta}$ at this point). So the correct way to
construct approximate eigenvector is
\begin{equation*}
x_1 = \sqrt{1-\frac{\sigma^2}{\theta^2}},\quad \tilde{x} = - c
(\tilde{G}-\lambda_{\theta}I)^{-1}v,\quad c>0, |x|=1
\end{equation*}
With this $x$, we have $x^* Ax\approx\lambda_{\theta}$. In fact, the
formula for $x^* Ax$ involves $L_1 = v^*Rv$ and $L_2 = v^*R^2 v$,
where $R= (\tilde{G}-\lambda_{\theta}I)^{-1}$; and we can use
Wigner's semicircle law to show $L_1 \approx \frac{1}{n}tr R\approx
-\frac{1}{\theta}$, $L_2\approx \frac{1}{n}tr R^2\approx
\frac{1}{\theta^2-\sigma^2}$. After some straightforward
calculation, this gives us $x^*Ax \approx\lambda_{\theta}$.

\section{Deformed GOE: Proof of Theorem \ref{GOE_Deviation}}

By the orthogonal invariance property of the $GOE$, we can assume $P
= \theta_1 E_{1,1} + \cdots + \theta_r E_{r,r}$. The proof is
divided into two subsections, corresponding to the two parts of
theorem \ref{GOE_Deviation}.

\subsection{Upper Tail Bound for the Largest Eigenvalues}

We prove part (i) of theorem \ref{GOE_Deviation} in this section.

The $r > 0$ case. By the minimax characterization of eigenvalues,
for $1\leq i\leq r$, we have
\begin{align}\label{GOE_Control_By_Norm}
\lambda_i (A) &= \min_{x^1,\cdots,x^{i-1}\in\mathbb{R}^n} \max_{x\in
S^{n-1}\cap\{x^1,\cdots,x^{i-1}\}^{\perp}}x^*Ax\notag\\
&\leq\max_{x\in S^{n-1},x_1=\cdots =x_{i-1}=0} x^*Ax = \|A_i\|
\end{align}
$A_i$ is the lower right $(n-i+1)\times (n-i+1)$ part of $A$. We can
consider $A_i$ as a linear operator from $V_i$ to itself, where $V_i
= \{x\in\mathbb{R}^n: x_1=\cdots=x_{i-1}=0\}$, so that its operator
norm can be controled by the maximum of $|x^*Ax|$ over an
$\epsilon$-net. More precisely, using lemma
\ref{Epsilon_Net_Lemma_Eigenvalue}, we have
\begin{equation}\label{GOE_Epsilon_Discretization}
\|A_i\|\leq\frac{1}{1-2\epsilon}\max\{x^*Ax,-x^*Ax: x\in\mathscr{X}_i\},\quad 0 < \epsilon < \frac{1}{2}
\end{equation}
When $i=r$, $\mathscr{X}_r$ is an $\epsilon$-net for $(S^{n-1}\cap
V_r \cap \{x\in\mathbb{R}^n: x_r\geq 0 \},|\cdot|)$. When $1\leq i
\leq r-1$, $\mathscr{X}_i$ is an $\epsilon$-net for $(S^{n-1}\cap
V_i, |\cdot|)$. (In the $r=1$ case, we do not have the $1\leq i \leq
r-1$ part.)

Our proof uses $\epsilon$-nets with a special structure, whose
construction and bound for its size are rather delicate, so we defer
the details to the appendix. See lemma
\ref{Epsilon_Net_Construction_1} and lemma
\ref{Epsilon_Net_Construction_2}.

Let's consider $\|A_r\|$ first. In this case
\begin{equation}\label{GOE_Epsilon_Net_Structure_1}
\mathscr{X}_r = \cup_{u\in\mathscr{N}_r} \mathscr{E}_r(u)
\end{equation}
where $\mathscr{E}_r (u) = \{x\in S^{n-1}: x_1 = \cdots = x_{r-1} =
0, x_r = u\}, u\in [0,1]$, $\mathscr{N}_r$ is a finite subset of
$[0,1]$ with $|\mathscr{N}_r|\leq\frac{2}{\epsilon}$. When $1\leq
i\leq r-1$
\begin{equation}\label{GOE_Epsilon_Net_Structure_2}
\mathscr{X}_i = \cup_{u\in\mathscr{N}_i}\mathscr{E}_i (u)
\end{equation}
where $\mathscr{E}_i (u) = \{x\in S^{n-1}: x_1 = \cdots = x_{i-1} =
0, (x_i,\cdots,x_r) = u\}, u\in B^{r-i+1}_2$, $\mathscr{N}_i$ is a
finite subset of $B^{r-i+1}_2$ with $|\mathscr{N}_i|\leq
\frac{4(r-i+1)^2}{\epsilon}(1 +
\frac{2(r-i+1)}{(r-i)\epsilon})^{r-i}$.

With this structure for $\mathscr{X}_i$, maximizing $|x^*Ax|$ over
$x\in\mathscr{X}_i$ becomes stratified: we can maximize $|x^*Ax|$
for $x\in\mathscr{E}_i (u)$ to get $L_{i,u} =
\max_{x\in\mathscr{E}_i (u)}x^*Ax$, $\tilde{L}_{i,u} =
\max_{x\in\mathscr{E}_i (u)} -x^*Ax$ for each $u\in\mathscr{N}_i$,
then select the largest among
$L_{i,u},\tilde{L}_{i,u},u\in\mathscr{N}_i$. i.e.
\begin{equation}\label{GOE_Stratified_Maximization}
\max\{x^*Ax, -x^*Ax: x\in\mathscr{X}_i\} = \max\{L_{i,u}, \tilde{L}_{i,u}: u\in\mathscr{N}_i\}
\end{equation}
\eqref{GOE_Control_By_Norm}, \eqref{GOE_Epsilon_Discretization} and \eqref{GOE_Stratified_Maximization} imply
\begin{equation}\label{GOE_Lambda_By_Epsilon}
\lambda_{i}(A) \leq \frac{1}{1-2\epsilon}\max\{L_{i,u},\tilde{L}_{i,u}: u\in\mathscr{N}_i\},\quad 0 < \epsilon < \frac{1}{2}
\end{equation}

\eqref{GOE_Lambda_By_Epsilon} is the starting point for building an
upper tail bound for $\lambda_i (A)$. The next step is to eatablish
a tail bound for each $L_{i,u},\tilde{L}_{i,u}$, then take a union
bound over $u\in\mathscr{N}_i$. We keep $\epsilon$ as a free
parameter along the way and optimize over $\epsilon$ at the end. To
get an upper tail bound for $L_{i,u}$ (similarly for
$\tilde{L}_{i,u}$), we will prove $E[L_{i,u}]\leq
\lambda_{\theta_i}$ using Slepian's lemma as stated below; then use
concentration of measure inequality for Gaussian processes.

\begin{proposition}\emph{(Slepian's Lemma)}\label{Slepian_1}
Let $(X_t)_{t\in T}$ and $(Y_t)_{t\in T}$ be two centered Gaussian
processes defined on the same finite index set $T$. Assume $E|X_s -
X_t|^2\leq E|Y_s - Y_t|^2$ for all $s,t\in T$. Then $E[\max_{t\in
T}X_t] \leq E[\max_{t\in T}Y_t]$.
\end{proposition}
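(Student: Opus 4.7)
The plan is to use a smoothing-and-interpolation argument. Assume without loss of generality that $(X_t)_{t\in T}$ and $(Y_t)_{t\in T}$ are independent and identify $T = \{1,\ldots,n\}$; write $\Sigma^X_{ij} = E[X_i X_j]$, $\Sigma^Y_{ij} = E[Y_i Y_j]$, and $d_W^2(i,j) = E|W_i - W_j|^2$. Replace the non-smooth max by the log-sum-exp approximation
\begin{equation*}
f_\beta(x) = \beta^{-1}\log\Big(\sum_{i=1}^n e^{\beta x_i}\Big), \qquad \beta > 0,
\end{equation*}
which is smooth, satisfies $\max_i x_i \le f_\beta(x) \le \max_i x_i + \beta^{-1}\log n$, and has derivatives $\partial_i f_\beta = p_i$ and $\partial^2_{ij} f_\beta = \beta\, p_i(\delta_{ij}-p_j)$, where $p_i(x) = e^{\beta x_i}/\sum_k e^{\beta x_k}$.

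For $\lambda\in[0,1]$ set $Z(\lambda) = \sqrt{1-\lambda}\,X + \sqrt{\lambda}\,Y$ and $F(\lambda) = E[f_\beta(Z(\lambda))]$, so $F(0) = E[f_\beta(X)]$ and $F(1) = E[f_\beta(Y)]$. Differentiating under the expectation (legitimate since $\partial_i f_\beta$ is bounded) and applying the Gaussian integration-by-parts identity $E[W_i\, g(V)] = \sum_l E[W_i V_l]\, E[\partial_l g(V)]$ separately to the $Y_i$ and $X_i$ terms coming from $dZ/d\lambda$, the $1/\sqrt{\lambda}$ and $1/\sqrt{1-\lambda}$ factors cancel cleanly against $E[Y_i Z_l] = \sqrt{\lambda}\,\Sigma^Y_{il}$ and $E[X_i Z_l] = \sqrt{1-\lambda}\,\Sigma^X_{il}$, giving
\begin{equation*}
F'(\lambda) = \tfrac{1}{2}\sum_{i,j}(\Sigma^Y_{ij} - \Sigma^X_{ij})\, E\bigl[\partial^2_{ij} f_\beta(Z(\lambda))\bigr].
\end{equation*}

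The crux is then to sign $F'(\lambda)$. I would substitute $\partial^2_{ij} f_\beta = \beta\, p_i(\delta_{ij}-p_j)$ and use the elementary identity $\Sigma_{ij} = \tfrac{1}{2}(\Sigma_{ii} + \Sigma_{jj} - d^2(i,j))$ together with $\sum_j p_j = 1$ to trade covariances for squared distances. The diagonal-variance contributions telescope away, leaving
\begin{equation*}
F'(\lambda) = \tfrac{\beta}{4}\sum_{i,j}\bigl(d_Y^2(i,j) - d_X^2(i,j)\bigr)\, E[p_i(Z(\lambda))\,p_j(Z(\lambda))] \ge 0
\end{equation*}
by hypothesis, since $p_i p_j \ge 0$. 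Hence $E[f_\beta(Y)] = F(1) \ge F(0) = E[f_\beta(X)]$; sending $\beta \to \infty$ and using the uniform bound $0 \le f_\beta(x) - \max_i x_i \le \beta^{-1}\log n$ yields the claim.

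The main obstacle is precisely the last manipulation. The hypothesis gives only a comparison of squared differences, not a term-by-term comparison of covariances, so any naive attempt to sign the sum in the expression for $F'(\lambda)$ directly fails; the crucial observation is that $\sum_j p_j = 1$ allows one to absorb the diagonal variances into the off-diagonal terms and re-express everything in $d_Y^2 - d_X^2$. A minor technical point is justifying Gaussian integration by parts and differentiation under the expectation, which is routine because $\partial_i f_\beta$ and $\partial^2_{ij} f_\beta$ are bounded for fixed $\beta$.
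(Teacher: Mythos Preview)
Your proof is correct; this is the standard Kahane--Sudakov--Fernique interpolation argument, and the key step (eliminating the variance terms via $\sum_j p_j = 1$ so that only the increments $d_Y^2 - d_X^2$ survive) is carried out correctly. Note, however, that the paper does not actually give its own proof of this proposition: it simply states the result and refers to \cite{gordon85} for a proof of both Slepian's lemma and its min--max generalization (Proposition~\ref{Gordon_1}). Gordon's argument is in the same spirit as yours---an interpolation combined with Gaussian integration by parts---so your approach is consistent with the cited reference, though you have supplied the details the paper chose to omit.
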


\begin{remark}
Although Slepian's lemma is stated for Gaussian processes defined on
a finite index set, we will use it on Gaussian processes defined on
infinite index sets. This is justified by an approximation procedure
and we omit this routine matter. This remark applies also to the
application of proposition \ref{Minimax_Gaussian_Concentration} and
proposition \ref{Gordon_1}.

A proof of proposition \ref{Slepian_1} and its generalization (proposition \ref{Gordon_1} below) can be found in \cite{gordon85}.
\end{remark}

Let $X_x = x^*Gx,x\in S^{n-1}$, write $u = (u_i,\cdots,u_r)\in
B^{r-i+1}_2$, then
\begin{align}
&L_{i,u} = \sum_{j = i}^{r}\theta_j u_j^2 + \max_{x\in\mathscr{E}_i (u)} X_x \label{GOE_Usual_Lu}\\
&\tilde{L}_{i,u} = -\sum_{j=i}^r \theta_j u_j^2 + \max_{x\in\mathscr{E}_i (u)} -X_x \label{GOE_Tilde_Lu}
\end{align}
Let $Y_x = \frac{2\sigma}{\sqrt{n}}\sum_{j = r+1}^{n} x_j \omega_j,
x\in S^{n-1}$, where $\omega_{r+1},\cdots,\omega_n$ are i.i.d.
$\mathcal{N}(0,1)$ random variables. Then for $x,y\in\mathscr{E}_i
(u)$, we have
{\allowdisplaybreaks\begin{align}\label{GOE_Slepian_Comparison}
E|X_x - X_y|^2 &= E|\sum_{k,j=1}^n (x_k x_j - y_k y_j)g_{k,j}|^2\\
&=\sum_{k=1}^n (x_k^2 - y_k^2)^2\frac{2\sigma^2}{n} + \sum_{1\leq k < j\leq n}4(x_k x_j - y_k y_j)^2\frac{\sigma^2}{n}\notag\\
&= \frac{2\sigma^2}{n}((\sum_{k=1}^n x_k^2)^2 + (\sum_{k=1}^n y_k^2)^2 - 2(\sum_{k=1}^n x_k y_k)^2)\notag\\
&= \frac{2\sigma^2}{n}(2 - 2(x,y)^2)\notag\\
&= \frac{4\sigma^2}{n}|x-y|^2 - \frac{4\sigma^2}{n}(1 - (x,y))^2\notag\\
&\leq  \frac{4\sigma^2}{n}|x-y|^2 = E|Y_x - Y_y|^2\notag
\end{align}}
Using Slepian's lemma, and noticing that the maximum of $\{Y_x:
x\in\mathscr{E}_i (u)\}$ is reached when $(x_{r+1},\cdots,x_n)$ is a
multiple of $(\omega_{r+1},\cdots,\omega_n)$, we have
\begin{align*}
&E[\max_{x\in\mathscr{E}_i (u)}X_x], \quad E[\max_{x\in\mathscr{E}_i (u)}-X_x] \\
&\leq E[\max_{x\in\mathscr{E}_i (u)}Y_x]= E[\frac{2\sigma\sqrt{1-|u|^2}}{\sqrt{n}}\sqrt{\sum_{j=r+1}^n \omega_j^2}]\\
&\leq \frac{2\sigma\sqrt{1-|u|^2}}{\sqrt{n}}\sqrt{E[\sum_{j=r+1}^n \omega_j^2]}\quad\text{(by Cauchy-Schwarz)}\\
&= 2\sigma\sqrt{1-|u|^2}\sqrt{1-\frac{r}{n}}\leq 2\sigma\sqrt{1-|u|^2}
\end{align*}
This together with \eqref{GOE_Usual_Lu}, \eqref{GOE_Tilde_Lu} imply
\begin{align*}
&E[L_{i,u}]\leq \sum_{j=i}^r \theta_j u_j^2 + 2\sigma\sqrt{1-|u|^2} = \varphi(u)\\
&E[\tilde{L}_{i,u}]\leq -\sum_{j=i}^r\theta_j u_j^2 + 2\sigma\sqrt{1-|u|^2} \leq \varphi(u)
\end{align*}

When $\theta_i > \sigma$, the maximum of $\varphi(u)$ is $\theta_i +
\frac{\sigma^2}{\theta_i}$ and is attained when $u = (\sqrt{1 -
\frac{\sigma^2}{\theta_i^2}},0,\cdots,0)$; when $\theta_i
\leq\sigma$, the maximum of $\varphi(u)$ is $2\sigma$. Therefore,
the previous two inequalities imply
\begin{equation}\label{GOE_Lu_Expectation}
E[L_{i,u}], \quad E[\tilde{L}_{i,u}] \leq \lambda_{\theta_i}
\end{equation}
Now we want to apply concentration inequalities to
$L_{i,u},\tilde{L}_{i,u}$, which are sumprema of Gaussian processes.
We need the following proposition, which is proved via the Gaussian
concentration inequality (proposition \ref{Gaussian_Concentration}
in the appendix).

\begin{proposition}\label{Minimax_Gaussian_Concentration}
Let $(X_{i,j})_{1\leq i\leq n,1\leq j\leq m}$ be a centered Gaussian process. Then for $t\geq 0$
\begin{align*}
P(\min_{i}\max_{j}X_{i,j} \geq E[\min_{i}\max_{j}X_{i,j}] + t)\leq e^{\frac{-t^2}{2\max_{i,j}EX_{i,j}^2}}\\
P(\min_{i}\max_{j}X_{i,j} \leq E[\min_{i}\max_{j}X_{i,j}] - t)\leq e^{\frac{-t^2}{2\max_{i,j}EX_{i,j}^2}}
\end{align*}
\end{proposition}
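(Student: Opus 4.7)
The plan is to reduce the statement to the standard Gaussian concentration inequality for Lipschitz functions by realising the process as a function of an i.i.d.\ standard Gaussian vector. Concretely, I would write every $X_{i,j}$ as a linear functional $X_{i,j} = \langle a_{i,j}, g\rangle$ of a single $g\sim\mathcal{N}(0,I_N)$ (take $N$ large enough that the joint covariance of all $X_{i,j}$ can be represented this way, e.g.\ via Cholesky of the covariance matrix of the $nm$ variables). Then $F(g):=\min_i\max_j X_{i,j}(g)$ is a deterministic function of $g$, and the claim reduces to showing $F$ is $L$-Lipschitz on $\mathbb{R}^N$ with $L=\max_{i,j}\sqrt{EX_{i,j}^2}=\max_{i,j}\|a_{i,j}\|$.

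For the Lipschitz estimate I would proceed in two layers. First, for each fixed $i$, the map $g\mapsto f_i(g):=\max_j\langle a_{i,j},g\rangle$ is $(\max_j\|a_{i,j}\|)$-Lipschitz, because
\begin{equation*}
|f_i(g)-f_i(h)|\leq \max_j |\langle a_{i,j},g-h\rangle|\leq \max_j\|a_{i,j}\|\cdot\|g-h\|,
\end{equation*}
using the elementary fact that $|\max_j\alpha_j-\max_j\beta_j|\leq \max_j|\alpha_j-\beta_j|$. Second, the same elementary inequality applied to $\min_i$ gives
\begin{equation*}
|F(g)-F(h)|\leq \max_i|f_i(g)-f_i(h)|\leq \max_{i,j}\|a_{i,j}\|\cdot\|g-h\|,
\end{equation*}
so indeed $F$ is Lipschitz with the advertised constant $L$.

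The final step is to invoke the standard Gaussian concentration inequality (proposition \ref{Gaussian_Concentration} in the appendix): for any $L$-Lipschitz $F:\mathbb{R}^N\to\mathbb{R}$ and $g\sim\mathcal{N}(0,I_N)$,
\begin{equation*}
P\bigl(F(g)\geq E[F(g)]+t\bigr)\leq e^{-t^2/(2L^2)},\qquad P\bigl(F(g)\leq E[F(g)]-t\bigr)\leq e^{-t^2/(2L^2)}.
\end{equation*}
Substituting $L^2=\max_{i,j}EX_{i,j}^2$ yields both inequalities in the proposition simultaneously. The only subtle point is the Lipschitz verification above; once that is in hand, the proof is essentially a one-line application of the Gaussian isoperimetric/concentration inequality, so I do not expect any serious obstacle. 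A minor remark is that the same argument works verbatim when the index sets are infinite, provided one first approximates by finite subsets (as already flagged in the remark following Slepian's lemma).
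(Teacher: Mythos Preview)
Your proposal is correct and follows essentially the same route as the paper: represent the $X_{i,j}$ as linear functionals $\langle a_{i,j},g\rangle$ of a single standard Gaussian vector, verify that $g\mapsto\min_i\max_j\langle a_{i,j},g\rangle$ is Lipschitz with constant $\max_{i,j}\|a_{i,j}\|=\sqrt{\max_{i,j}EX_{i,j}^2}$, and invoke proposition~\ref{Gaussian_Concentration}. The only cosmetic difference is that the paper carries out the Lipschitz estimate by a direct chasing-of-indices argument rather than invoking the inequality $|\min_i\alpha_i-\min_i\beta_i|\le\max_i|\alpha_i-\beta_i|$ in two layers, but the content is identical.
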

\begin{proof}
We can find i.i.d. $\mathcal{N}(0,1)$ random variables $Y_1,\cdots,
Y_{nm}$ and $A\in\mathbb{R}^{nm\times nm}$ so that $X_{i,j} =
\sum_{k=1}^{nm}a_{(i-1)m+j, k}Y_k, 1\leq i\leq n, 1\leq j\leq m$.
Define $g_{i,j}(y) = \sum_{k=1}^{nm}a_{(i-1)m+j, k}y_k, g(y) =
\min_{1\leq i\leq n}\max_{1\leq j\leq m}g_{i,j}(y),
y\in\mathbb{R}^{nm}$. Then $\min_i \max_j X_{i,j} = g(Y)$.

Let $y,\tilde{y}\in\mathbb{R}^{nm}$, $g(y) =
g_{i_1,j_1}(y),g(\tilde{y})=g_{i_2,j_2}(\tilde{y})$. Assume
$g(y)\geq g(\tilde{y})$, $\max_{1\leq j\leq m}g_{i_2,j}(y) =
g_{i_2,j_3}(y)$, then
\begin{align*}
|g(y) - g(\tilde{y})| &= g_{i_1,j_1}(y) - g_{i_2,j_2}(\tilde{y}) = \min_{1\leq i\leq n}\max_{1\leq j\leq m}g_{i,j}(y) - g_{i_2,j_2}(\tilde{y})\\
&\leq \max_{1\leq j\leq m}g_{i_2,j}(y) - g_{i_2,j_2}(\tilde{y}) = g_{i_2,j_3}(y)-\max_{1\leq j\leq m}g_{i_2,j}(\tilde{y})\\
&\leq g_{i_2,j_3}(y) - g_{i_2,j_3}(\tilde{y}) \leq \max_i\max_j|g_{i,j}(y) - g_{i,j}(\tilde{y})|
\end{align*}
Hence $g$ has Lipschitz constant bounded by the norm of the operator
$A:(\mathbb{R}^{nm},l_2) \rightarrow (\mathbb{R}^{nm},l_{\infty})$,
which equals
\begin{equation*}\max_{i,j} \sqrt{\sum_{k=1}^{nm}a_{(i-1)m+j,k}^2} = \sqrt{\max_{i,j} EX_{i,j}^2}
\end{equation*}
Then we can apply proposition \ref{Gaussian_Concentration} to conclude.
\end{proof}
\begin{remark}
Using a similar arguemnt, proposition
\ref{Minimax_Gaussian_Concentration} generalizes naturally to:
$\max\min$, $\max\min\max$, $\min\max\min$, etc. We will only use
the ``max'' version (i.e. $n=1$ case) in the argument that follows.
However, we will be using the full ``minmax'' version in the proof
of theorem \ref{SPM_Deviation}.
\end{remark}

Since $E[X_x^2] = \sum_{k=1}^n x_k^4 E[g_{k,k}^2] + \sum_{k <
j}4x_k^2 x_j^2 E[g_{k,j}^2] = \frac{2\sigma^2}{n},x\in S^{n-1}$, we
can use \eqref{GOE_Lu_Expectation} and apply proposition
\ref{Minimax_Gaussian_Concentration} to get
\begin{align*}
&P(L_{i,u}\geq \lambda_{\theta_i} + t)\leq e^{-\frac{nt^2}{4\sigma^2}},\quad t\geq 0\\
&P(\tilde{L}_{i,u}\geq \lambda_{\theta_i} + t)\leq e^{-\frac{nt^2}{4\sigma^2}},\quad t\geq 0
\end{align*}
Let $\epsilon = \frac{(1-a)t}{2(\lambda_{\theta_i} + t)}, 0 < a <
1$, then $(1-2\epsilon)(\lambda_{\theta_i} + t) = \lambda_{\theta_i}
+ at$. Using \eqref{GOE_Lambda_By_Epsilon} and the above two
inequalities, we have
\begin{align*}
P(\lambda_i (A)\geq \lambda_{\theta_i} + t) &\leq P(\max\{L_{i,u},\tilde{L}_{i,u}: u\in\mathscr{N}_i\}\geq (1-2\epsilon)(\lambda_{\theta_i} + t))\\
&\leq \sum_{u\in\mathscr{N}_i}P(L_{i,u}\geq \lambda_{\theta_i}+ at) +\sum_{u\in\mathscr{N}_i}P(\tilde{L}_{i,u}\geq\lambda_{\theta_i} + at)\\
&\leq 2|\mathscr{N}_i|e^{-\frac{n a^2 t^2}{4\sigma^2}}
\end{align*}
When $t\geq\frac{\sqrt{2(r-i+1)}\sigma}{\sqrt{\delta (1-\delta)n}}$,
choose $a = \frac{1}{2}(1 + \sqrt{1 - \frac{8(r-i+1)\sigma^2}{n
t^2}})$, then $a \geq 1- \delta\geq\frac{1}{2}$. This choice of $a$
guarantees $0 < \epsilon \leq\frac{1}{3}$ (This is needed when we
apply \eqref{GOE_Lambda_By_Epsilon} and lemma
\ref{Epsilon_Net_Construction_1}). When $i=r$, we use
$|\mathscr{N}_r|\leq\frac{2}{\epsilon}$; when $1\leq i\leq r-1$, we
use $|\mathscr{N}_i|\leq\frac{4(r-i+1)^2}{\epsilon}(1 +
\frac{2(r-i+1)}{(r-i)\epsilon})^{r-i}$. After some simplification,
we get
\begin{equation*}
P(\lambda_i (A)\geq \lambda_{\theta_i} + t)\leq 2 C_{r-i+1,\theta_i}(n)\cdot e^{-\frac{na^2 t^2}{4\sigma^2}}\leq 2 C_{r-i+1,\theta_i} (n)\cdot e^{-\frac{(1-\delta)^2 nt^2}{4\sigma^2}}
\end{equation*}
This finishes the proof of the $r > 0$ case.

When $r=0$, $\lambda_1 (A) = \max_{x\in S^{n-1}}X_x$ with $X_x =
x^*Gx$. Define $Y_x = \frac{2\sigma}{\sqrt{n}}\sum_{j=1}^n
x_j\omega_j, x\in S^{n-1}$, $\omega_1,\cdots,\omega_n$ are i.i.d.
$\mathcal{N}(0,1)$ random variables. Similar to
\eqref{GOE_Slepian_Comparison}, we have $E|X_x - X_y|^2\leq E|Y_x -
Y_y|^2, \forall x, y\in S^{n-1}$; thus $E[\lambda_1 (A)]\leq
E[\max_{x\in S^{n-1}}Y_x]\leq 2\sigma$ by Slepian's lemma. Applying
proposition \ref{Minimax_Gaussian_Concentration}, we get
\begin{equation*}
P(\lambda_1 (A)\geq 2\sigma + t)\leq e^{-\frac{nt^2}{4\sigma^2}},\quad t\geq 0
\end{equation*}

\subsection{Approximate Eigenvectors}

We prove part (ii) of theorem \ref{GOE_Deviation} in this section.

The idea of the proof is to construct, for each $i,1\leq i\leq r_0$,
an approximate eigenvector $x$, i.e. $x\in S^{n-1}$, with
$x^*Ax\approx\lambda_{\theta_i}$.

Let $m=n-r$, $\sqrt{\frac{m}{n}}\tilde{G}$ be the lower right
$m\times m$ submatrix of $G$, then $\tilde{G}\in
GOE(m,\frac{\sigma^2}{m})$. Let $2\sigma < \lambda_0 <
\lambda_{\theta_i}$, $B =
\{\lambda_{\max}(\tilde{G})\leq\lambda_0\}$, then
\eqref{GOE_Bound_Null_Case} in (i) implies
\begin{equation}\label{GOE_Truncation_Error_Prob}
P(B^c) \leq e^{-\frac{m(\lambda_0 - 2\sigma)^2}{4\sigma^2}}
\end{equation}
We will only construct approximate eigenvectors on the event $B$;
the indicator $1_B$ might not be mentioned at every instance.

Let $x\in S^{n-1}$ be such that $x_1 = \cdots = x_{i-1} = x_{i+1} =
\cdots = x_r = 0$, $x_i = \sqrt{1-\frac{\sigma^2}{\theta_i^2}}$, and
\begin{align}\label{GOE_Schur_Complement_Eigenvector}
&(x_{r+1},\cdots,x_n)^t = -\frac{\sigma}{\theta_i}\frac{Rv}{\sqrt{L_2}}\\
&(g_{i,r+1},\cdots,g_{i,n})^t = \sqrt{\frac{m}{n}}\sigma v\notag
\end{align}
The random vector $v$ defined above has i.i.d.
$\mathcal{N}(0,\frac{1}{m})$ coordinates. $R = (\tilde{G} -
\lambda_{\theta_i}I)^{-1}$, $L_1 = v^* R v$, $L_2 = v^* R^2 v$.  A
straight forward calculation shows
\begin{align}\label{GOE_Lambda_Minus_xAx}
\lambda_{\theta_i} - x^*Ax &= (1 - \sqrt{1-\frac{r}{n}})\frac{2\sigma^2}{\theta_i} - g_{i,i}(1 - \frac{\sigma^2}{\theta_i^2})\\
&\quad +
\sqrt{\frac{m}{n}}\frac{\sigma^2}{\theta_i^2}(\frac{-L_1}{L_2} -
\frac{\theta_i^2 -\sigma^2}{\theta_i})\notag\\
&\quad+\sqrt{\frac{m}{n}}\frac{2\sigma^2}{\theta_i}\sqrt{1-\frac{\sigma^2}{\theta_i^2}}(\frac{L_1}{\sqrt{L_2}}+\frac{\sqrt{\theta_i^2-\sigma^2}}{\theta_i})\notag
\end{align}
The next step is to show $L_1\approx -\frac{1}{\theta_i}, L_2\approx
\frac{1}{\theta_i^2 - \sigma^2}$.  This makes the grouping of terms
in \eqref{GOE_Lambda_Minus_xAx} clear:  the four terms are all small
and we can take a union bound to get a deviation inequality for
$\lambda_{\theta_i} - x^*Ax$.

There are two sources of randomness in $L_j$: $v$ and $\tilde{G}$;
and they are independent.  We break the task of building deviation
inequalities for $L_j$ into three steps.  The first step is to show
that, conditioning on $\tilde{G}$, $L_j$ concentrates around $E[L_j
| \tilde{G}]$, see lemma \ref{GOE_L_Concentration}. The second step
is to show that $E[L_j | \tilde{G}]$ concentrates around $E[L_j]$,
see lemma \ref{GOE_Trace_Concentration}.  The third step is to show
$E[L_1]\approx -\frac{1}{\theta_i},E[L_2]\approx\frac{1}{\theta_i^2
- \sigma^2}$, see lemma \ref{GOE_Trace_Expectation_Distance}.

\begin{lemma}\label{GOE_L_Concentration}
$t\geq 0$. On the event $B$, we have
\begin{align*}
&P(L_1 - \frac{1}{m}tr R \leq -t\ |\ \tilde{G})\leq e^{-\frac{1}{4}m(\sqrt{1 + 2(\lambda_{\theta_i} - \lambda_0)t} - 1)^2}\\
&P(L_1 - \frac{1}{m}tr R \geq t\ |\ \tilde{G})\leq e^{-\frac{1}{4}m(\lambda_{\theta_i}-\lambda_0)^2 t^2}\\
&P(L_2 - \frac{1}{m}tr R^2 \leq -t\ |\ \tilde{G})\leq e^{-\frac{1}{4}m (\lambda_{\theta_i}-\lambda_0)^4 t^2}\\
&P(L_2 - \frac{1}{m}tr R^2 \geq t\ |\ \tilde{G})\leq
e^{-\frac{1}{4}m(\sqrt{1 + 2(\lambda_{\theta_i} - \lambda_0)^2 t} -
1)^2}
\end{align*}
\end{lemma}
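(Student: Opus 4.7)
The plan is to condition on $\tilde{G}$ and reduce each $L_j$ to a linear combination of independent centred $\chi_1^{2}$ variables, then apply a standard Chernoff/Bernstein bound. On the event $B$, every eigenvalue of $\tilde{G}$ is at most $\lambda_0 < \lambda_{\theta_i}$, so the eigenvalues $\nu_1,\ldots,\nu_m$ of $R = (\tilde{G}-\lambda_{\theta_i}I)^{-1}$ all lie in the interval $[-\kappa,0)$ with $\kappa = (\lambda_{\theta_i}-\lambda_0)^{-1}$, and the eigenvalues of $R^{2}$ lie in $(0,\kappa^{2}]$. This controls every quantity that will enter the Bernstein estimate.

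Fix such a $\tilde{G}$, diagonalise $R = U\diag(\nu_1,\ldots,\nu_m)U^{*}$, and use the orthogonal invariance of $v$ to assume $R$ is already diagonal. Setting $w_k = \sqrt{m}\,v_k$ gives i.i.d. $\mathcal{N}(0,1)$ variables and
\[
L_j - \tfrac{1}{m}\trace R^{j} \;=\; \sum_{k=1}^{m}a_k^{(j)}(w_k^{2}-1),\qquad a_k^{(j)} = \nu_k^{j}/m,
\]
with $a_k^{(1)}\leq 0$, $a_k^{(2)}\geq 0$, $\sum_{k}(a_k^{(j)})^{2}\leq \kappa^{2j}/m$, and $\max_{k}|a_k^{(j)}|\leq \kappa^{j}/m$ on $B$. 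The Chernoff bound using $\log E\,e^{s(w_k^{2}-1)} = -s-\tfrac{1}{2}\log(1-2s)$ yields the usual two-sided Bernstein estimate: for any real weights $\alpha_k$ and $x\geq 0$,
\[
P\!\Bigl(\textstyle\sum\alpha_k(w_k^{2}-1)\geq 2\|\alpha\|_{2}\sqrt{x}+2\max_k(\alpha_k)_{+}\,x\Bigr)\leq e^{-x},
\]
and the symmetric lower tail with $\max_{k}(-\alpha_k)_{+}$ in place of $\max_{k}(\alpha_k)_{+}$.

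In two of the four cases---the upper tail of $L_1$ and the lower tail of $L_2$---the relevant one-sided maximum vanishes identically because $a_k^{(1)}\leq 0$ and $a_k^{(2)}\geq 0$, so the bound is purely sub-Gaussian and solving $2\|a^{(j)}\|_{2}\sqrt{x}=t$ produces exponent $x = mt^{2}/(4\kappa^{2j}) = \tfrac14 m(\lambda_{\theta_i}-\lambda_0)^{2j}t^{2}$, matching the claim. In the other two cases both terms contribute; setting $y = \sqrt{1+2\kappa^{-j}t}$ and $x = \tfrac{m}{4}(y-1)^{2}$, the identity $(y-1)(y+1)=y^{2}-1$ gives $2\|a^{(j)}\|_{2}\sqrt{x} + 2\|a^{(j)}\|_{\infty}x \leq t$, so this choice of $x$ is admissible and reproduces exactly the stated exponent $\tfrac14 m(\sqrt{1+2\kappa^{-j}t}-1)^{2}$. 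The main ``obstacle'' here is purely bookkeeping: tracking signs so the Gaussian bounds land on the right tails of $L_1$ and $L_2$, and verifying that the $y$-substitution gives the author's exponent on the nose. There is no deeper analytic issue beyond the one-line MGF computation for a single $\chi_1^{2}$.
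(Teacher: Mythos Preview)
Your proof is correct and is essentially the same as the paper's. The paper simply says: condition on $\tilde{G}$, diagonalize via the orthogonal invariance of $v$, and apply Proposition~\ref{Chi_Square_Tail_Bound} (the Laurent--Massart chi-square tail bound), which is exactly the Bernstein estimate you derive from the $\chi_1^2$ MGF; your sign bookkeeping and the $y=\sqrt{1+2\kappa^{-j}t}$ substitution reproduce the stated exponents verbatim.
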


Conditioning on $\tilde{G}$, $R$ is a constant matrix. Since the
distribution of $v$ is orthogonal invariant, we can diagonalize the
quadratic forms $L_1 = v^* Rv, L_2 = v^* R^2 v$ and apply
proposition \ref{Chi_Square_Tail_Bound} to get lemma
\ref{GOE_L_Concentration}. The bounds in the first and fourth
inequalities are complicated. When we apply lemma
\ref{GOE_L_Concentration}, we will use $e^{-\frac{1}{4}m(1-\delta)^2
(\lambda_{\theta_i}-\lambda_0)^2 t^2}$ as a bound in the first
inequality; this is valid when
$(\lambda_{\theta_i}-\lambda_0)t\leq\frac{2\delta}{(1-\delta)^2}$.
Similarly for the fourth inequality.

\begin{lemma}\label{GOE_Trace_Concentration}
$t\geq 0$. On the event $B$, we have
\begin{align*}
&P(\frac{1}{m}tr R -E[\frac{1}{m}(tr R)1_B]\geq t \ (or \leq -t)\
)\leq
e^{-\frac{1}{2}m(\lambda_{\theta_i}-\lambda_0)^2 t^2}\\
&P(\frac{1}{m}tr R^2 -E[\frac{1}{m}(tr R^2)1_B]\geq t \ (or \leq
-t)\  )\leq e^{-\frac{1}{2}m(\lambda_{\theta_i}-\lambda_0)^4 t^2}
\end{align*}
\end{lemma}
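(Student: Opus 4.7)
The plan is to apply the Gaussian concentration inequality (proposition \ref{Gaussian_Concentration}) to $\frac{1}{m}\operatorname{tr} R$ and $\frac{1}{m}\operatorname{tr}(R^2)$, viewed as functions of the independent standard Gaussian variables underlying $\tilde{G}$. Parametrize $\tilde{G}_{jk}=(\sigma/\sqrt{m})\,g_{jk}$ for $j<k$ and $\tilde{G}_{jj}=(\sigma\sqrt{2}/\sqrt{m})\,g_{jj}$, with $g_{jk}$ i.i.d.\ $\mathcal{N}(0,1)$. On the open region $U=\{\lambda_{\max}(\tilde{G})<\lambda_{\theta_i}\}$ the resolvent $R=(\tilde{G}-\lambda_{\theta_i}I)^{-1}$ is smooth, and the matrix-calculus identity $\partial R/\partial\tilde{G}_{jk}=-RM^{jk}R$ (where $M^{jk}=E_{jk}+E_{kj}$ for $j<k$ and $M^{jj}=E_{jj}$) yields, after collecting terms and exploiting the symmetry of $R^p$,
\begin{equation*}
\|\nabla_g\operatorname{tr} R\|^2 \;=\; \frac{2\sigma^2}{m}\operatorname{tr}(R^4),
\qquad
\|\nabla_g\operatorname{tr}(R^2)\|^2 \;=\; \frac{8\sigma^2}{m}\operatorname{tr}(R^6).
\end{equation*}

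On the event $B$ one has $\|R\|\leq 1/(\lambda_{\theta_i}-\lambda_0)$ and hence $\operatorname{tr}(R^{2p})\leq m/(\lambda_{\theta_i}-\lambda_0)^{2p}$, which turns the above into pointwise bounds for the Lipschitz norms of $\frac{1}{m}\operatorname{tr} R$ and $\frac{1}{m}\operatorname{tr}(R^2)$ valid on $B$. Plugging these constants into proposition \ref{Gaussian_Concentration} produces Gaussian tail bounds of the announced shape $\exp(-\Omega(t^2))$, with the precise exponents extracted from the Lipschitz constants (standard bookkeeping).

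The main obstacle is that the two functions are only \emph{locally} Lipschitz on $\mathbb{R}^N$: their gradients blow up as $\lambda_{\max}(\tilde{G})\to\lambda_{\theta_i}^{-}$, so proposition \ref{Gaussian_Concentration} cannot be applied to them directly. I would resolve this by spectral truncation: let $\tilde{G}_{\mathrm{tr}}$ be obtained from $\tilde{G}$ by capping each eigenvalue at $\lambda_0$ via the spectral decomposition. By Mirsky's theorem (Weyl's inequality in Frobenius norm) the map $\tilde{G}\mapsto\tilde{G}_{\mathrm{tr}}$ is $1$-Lipschitz, so the functions $\hat{f}_k(\tilde{G})=\frac{1}{m}\operatorname{tr}[(\tilde{G}_{\mathrm{tr}}-\lambda_{\theta_i}I)^{-k}]$ are globally Lipschitz with the same constants as above, since the image of the truncation map lies in the convex set $\{A:\lambda_{\max}(A)\leq\lambda_0\}$, on which the pointwise gradient bound holds. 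Moreover $\hat{f}_k$ agrees with $\frac{1}{m}\operatorname{tr}(R^k)$ on $B$ and is uniformly bounded by $(\lambda_{\theta_i}-\lambda_0)^{-k}$. Applying proposition \ref{Gaussian_Concentration} to $\hat{f}_k$ and intersecting the resulting tail event with $B$ transfers the bound to $\frac{1}{m}\operatorname{tr}(R^k)$; the discrepancy $|E\hat{f}_k-E[\frac{1}{m}\operatorname{tr}(R^k)\cdot 1_B]|\leq P(B^c)\,(\lambda_{\theta_i}-\lambda_0)^{-k}$ is exponentially small by \eqref{GOE_Truncation_Error_Prob} and absorbed into the constants. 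The fiddly step I expect to take the most care is verifying cleanly that the Lipschitz constant passes through this composition with the spectral-truncation map.
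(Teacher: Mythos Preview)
Your approach is correct but genuinely different from the paper's. The paper does \emph{not} use Gaussian Lipschitz concentration here; instead it applies McDiarmid's bounded--differences inequality (proposition~\ref{McDiarmid_Inequality}) to $f=\frac{1}{m}(\operatorname{tr} R)\,1_B$. The bounded--difference structure comes from grouping the entries of $\tilde G$ into the $m$ ``hooks'' $X_s=\{g_{l,j}:\max(l,j)=s\}$ and using Cauchy's interlacing law (proposition~\ref{Cauchy_Interlacing_Law}): deleting the $j$-th row and column of $\tilde G$ produces a matrix whose eigenvalues interlace those of $\tilde G$, and the monotonicity of $x\mapsto(\lambda_{\theta_i}-x)^{-1}$ then forces $\frac{1}{m}\operatorname{tr} R$ to be within $\frac{1}{m(\lambda_{\theta_i}-\lambda_0)}$ of a quantity that does not depend on the $j$-th hook. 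This yields $c_j=\frac{2}{m(\lambda_{\theta_i}-\lambda_0)}$ in McDiarmid and hence the stated exponent $\frac{1}{2}m(\lambda_{\theta_i}-\lambda_0)^2t^2$.

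Your route---gradient bound plus spectral truncation plus proposition~\ref{Gaussian_Concentration}---actually produces a \emph{stronger} exponent, of order $m^2(\lambda_{\theta_i}-\lambda_0)^4 t^2/\sigma^2$ rather than $m(\lambda_{\theta_i}-\lambda_0)^2 t^2$, which is the familiar gain one sees for linear spectral statistics when Gaussian/log-Sobolev concentration replaces bounded differences. So your argument more than suffices for the lemma as stated (for the $m$ of interest), and the ``fiddly step'' you flag is indeed the right one to justify carefully: that $A\mapsto\min(A,\lambda_0 I)$ is $1$-Lipschitz in Frobenius norm on symmetric matrices (a consequence of the operator-Lipschitz property for scalar $1$-Lipschitz functions in the Hilbert--Schmidt norm), after which composition with the smooth resolvent on the convex set $\{\lambda_{\max}\le\lambda_0\}$ is routine. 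What the paper's approach buys instead is distribution-freeness (McDiarmid needs no Gaussianity) and a very short, combinatorial proof via interlacing that avoids any truncation device.
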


\begin{proof}
$f = \frac{1}{m}(tr R)1_B$ is a function of the random variables
$g_{l,j}$; and these $g_{l,j}$ are independent. If we can divide
$\{g_{l,j}\}$ into several groups such that each group has limited
influence on $f$, then McDiarmid's Inequality (proposition
\ref{McDiarmid_Inequality}) will yield a concentration inequality
for $f$.

Let $\tilde{G}_j$ be the submatrix of $\tilde{G}$ obtained by
deleting the $j$-th row and $j$-th column. Then proposition
\ref{Cauchy_Interlacing_Law} implies
\begin{equation}\label{GOE_G_Interlacing}
\lambda_1 (\tilde{G})\geq\lambda_1
(\tilde{G}_j)\geq\lambda_2(\tilde{G}) \geq \lambda_2
(\tilde{G}_j)\geq\cdots\geq\lambda_m(\tilde{G})
\end{equation}
Let $\varphi(x) = \frac{1}{\lambda_{\theta_i}-x}, x\in
(-\infty,\lambda_0]$. We have
\begin{align*}
&-\frac{1}{m}(tr R)1_B + \frac{1}{m}(tr (\tilde{G}_j - \lambda_{
\theta_i})^{-1})1_B\\
&= \frac{1}{m}[\sum_{l=1}^m \varphi(\lambda_l(\tilde{G})) -
\sum_{l=1}^{m-1}\varphi(\lambda_l(\tilde{G}_j))]1_B\\
&= \frac{1}{m}[\sum_{l=1}^{m-1}(\varphi(\lambda_l(\tilde{G})) -
\varphi(\lambda_l(\tilde{G}_j)) ) + \varphi(\lambda_m
(\tilde{G}))]1_B
\end{align*}
The monotonicity of $\varphi(x)$ and \eqref{GOE_G_Interlacing} imply
\begin{align*}
&|\frac{1}{m}[\sum_{l=1}^{m-1}(\varphi(\lambda_l(\tilde{G})) -
\varphi(\lambda_l(\tilde{G}_j)) ) + \varphi(\lambda_m
(\tilde{G}))]1_B|\\
&\leq\frac{1}{m}\varphi(\lambda_1(\tilde{G}))1_B\leq\frac{1}{m(\lambda_{\theta_i}-\lambda_0)}
\end{align*}
So $f$ is within $\frac{1}{m(\lambda_{\theta_i}-\lambda_0)}$
distance to $\frac{1}{m}(tr (\tilde{G}_j - \lambda_{
\theta_i})^{-1})1_B$. Therefore, changing the $j$-th row and $j$-th
column of $\tilde{G}$ will leave $f$ to vary in an interval of
length $\frac{2}{m(\lambda_{\theta_i}-\lambda_0)}$.

Divide $\{g_{l,j}\}$ into $m$ groups: $X_s=\{g_{l,j}|
\max(l,j)=s\},s = r+1,\cdots,n$. Then each $X_s$ influences $f$ by
at most $\frac{2}{m(\lambda_{\theta_i}-\lambda_0)}$.  Now we can
apply proposition \ref{McDiarmid_Inequality} to get the first
inequality. The proof for the second inequality is similar.
\end{proof}

Let
$g(z)=\int_{-2\sigma}^{2\sigma}\frac{1}{x-z}\frac{\sqrt{4\sigma^2 -
x^2}}{2\pi\sigma^2}dx$ be the Stieltjes transform of the semicircle
law.  Then
\begin{equation}\label{GOE_Stieltjes_At_Lambda_Theta}
g(\lambda_{\theta_i}) = -\frac{1}{\theta_i},\quad
g'(\lambda_{\theta_i}) = \frac{1}{\theta_i^2 - \sigma^2}
\end{equation}

\begin{lemma}\label{GOE_Trace_Expectation_Distance}
There exists a constant $C$ such that
\begin{align*}
&|E[\frac{1}{m}(tr R)1_B] - g(\lambda_{\theta_i})|
\leq\frac{C}{m(\lambda_{\theta_i}-\lambda_0)}\\
&|E[\frac{1}{m}(tr R^2)1_B] - g'(\lambda_{\theta_i})|
\leq\frac{C}{m(\lambda_{\theta_i} - \lambda_0)^2}
\end{align*}
\end{lemma}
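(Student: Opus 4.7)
The plan is to reexpress both quantities as integrals of the test function $\varphi(x) = (x - \lambda_{\theta_i})^{-1}$ against suitable measures on $\mathbb{R}$ and invoke the standard rate of convergence of the expected spectral measure of the GOE to the semicircle law. Let $\mu_m = \frac{1}{m}\sum_{l=1}^m \delta_{\lambda_l(\tilde G)}$ be the empirical spectral measure of $\tilde G$. Since on $B$ every eigenvalue of $\tilde G$ lies in $(-\infty,\lambda_0]$, where $\varphi$ is $C^\infty$ with $|\varphi^{(k)}|\leq k!/(\lambda_{\theta_i}-\lambda_0)^{k+1}$, we have
\begin{equation*}
E\!\left[\tfrac{1}{m}tr R\cdot 1_B\right] = \int \varphi(x)\, dE[\mu_m\cdot 1_B](x),\qquad g(\lambda_{\theta_i}) = \int_{-2\sigma}^{2\sigma}\varphi(x)\,\rho_{sc}(x)\,dx,
\end{equation*}
where $\rho_{sc}$ is the semicircle density (supported in $[-2\sigma, 2\sigma]\subset(-\infty,\lambda_0]$). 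The second bound is proved identically with $\varphi'(x) = -(x - \lambda_{\theta_i})^{-2}$ in place of $\varphi(x)$; the extra power of the resolvent norm accounts for the $(\lambda_{\theta_i}-\lambda_0)^{-2}$ rate there.

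Telescope the difference as
\begin{equation*}
E\!\left[\tfrac{1}{m}tr R\cdot 1_B\right] - g(\lambda_{\theta_i}) = \Bigl(\int \varphi\, dE[\mu_m] - g(\lambda_{\theta_i})\Bigr) - \int \varphi\, dE[\mu_m\cdot 1_{B^c}].
\end{equation*}
The first bracketed term is the error in approximating the expected Stieltjes transform $s_m(\lambda_{\theta_i}) := E[\frac{1}{m}tr(\tilde G-\lambda_{\theta_i}I)^{-1}]$ by $g(\lambda_{\theta_i})$. I would bound it via the classical route of a Schur-complement expansion in the rows of $\tilde G$ together with the concentration of the Stieltjes transform supplied by lemma \ref{GOE_Trace_Concentration}, which produces a perturbed self-consistent equation of the form $s_m(z) = (-z - \sigma^2 s_m(z))^{-1} + O(1/(m(\lambda_{\theta_i}-\lambda_0)^2))$. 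Linearizing around the unperturbed solution $g(z)$ and using boundedness of $g'(\lambda_{\theta_i})$ away from the spectrum yields $|s_m(\lambda_{\theta_i})-g(\lambda_{\theta_i})|\leq C/(m(\lambda_{\theta_i}-\lambda_0))$.

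The second term is controlled by the exponentially small probability $P(B^c) \leq e^{-m(\lambda_0 - 2\sigma)^2/(4\sigma^2)}$. Even though $\varphi$ can be large on $B^c$ if an eigenvalue drifts toward $\lambda_{\theta_i}$, a pointwise bound $|\frac{1}{m}tr R|\leq 1/\mathrm{dist}(\lambda_{\theta_i},\mathrm{spec}(\tilde G))$ combined with the rapid decay of the GOE joint eigenvalue density above $\lambda_0$ bounds this contribution by $O(e^{-cm})$, which is absorbed into $C/(m(\lambda_{\theta_i}-\lambda_0))$ by enlarging the constant. The main obstacle is the first step: the self-consistent equation argument for the expected GOE Stieltjes transform, while classical, is not derived elsewhere in the paper, and one must carefully track the $\lambda_{\theta_i}-\lambda_0$ dependence at every application of the resolvent identity in order to obtain exactly the rate claimed, and the analogous rate with an additional factor of $(\lambda_{\theta_i}-\lambda_0)^{-1}$ for the $tr R^2$ case.
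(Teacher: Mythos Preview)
Your route is quite different from the paper's and carries one real gap.

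The paper does not derive a rate for the expected Stieltjes transform from scratch. It simply invokes the black-box Kolmogorov distance bound of Proposition~\ref{GOE_Convergence_Rate}, $\sup_x|EF_m(x)-F(x)|\le C/m$, and transfers it to the desired quantity by a monotone step-function approximation. Concretely, the paper works with the \emph{truncated} test function $\varphi(x)=(\lambda_{\theta_i}-x)^{-1}1_{x\le\lambda_0}$, writes $\varphi$ as a limit of sums $\sum c_j 1_{(-\infty,a_j]}$ with $\sum|c_j|\le 2\|\varphi\|_{\max}=2/(\lambda_{\theta_i}-\lambda_0)$, applies the Kolmogorov bound to each indicator, and passes to the limit. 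The entire argument is a few lines once Proposition~\ref{GOE_Convergence_Rate} is available; no Schur-complement recursion or self-consistent equation is needed. Your proposed derivation of $|s_m-g|$ via a perturbed fixed-point equation would work in principle but is considerably heavier and, as you note, is not set up elsewhere in the paper.

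The genuine gap is your treatment of the $B^c$ contribution. You bound it by $|\tfrac1m\operatorname{tr}R|\cdot P(B^c)$-type reasoning, but on $B^c$ the quantity $|\tfrac1m\operatorname{tr}R|$ is not uniformly bounded: an eigenvalue of $\tilde G$ can sit arbitrarily close to $\lambda_{\theta_i}$, and $\tfrac1m\operatorname{tr}R$ is not even in $L^1$ without further argument. Invoking ``rapid decay of the joint eigenvalue density'' is exactly the kind of input you were trying to avoid. The paper sidesteps this completely by truncating the test function rather than the event: with $\varphi$ as above one compares $\int\varphi\,dEF_m$ to $\int\varphi\,dF$, both of which involve only the bounded integrand $\varphi$; the residual difference between $\int\varphi\,dEF_m$ and $-E[\tfrac1m(\operatorname{tr}R)1_B]$ is $E[\tfrac1m\sum_l\varphi(\lambda_l)1_{B^c}]$, where each summand is already bounded by $(\lambda_{\theta_i}-\lambda_0)^{-1}$, so this piece is at most $P(B^c)/(\lambda_{\theta_i}-\lambda_0)$ and is harmlessly absorbed. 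If you adopt this truncation, your decomposition becomes sound, but at that point the step-function argument against Proposition~\ref{GOE_Convergence_Rate} is the shorter path.
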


\begin{proof}
This lemma is proved using lemma \ref{GOE_Convergence_Rate}, we will
adopt the notation of lemma \ref{GOE_Convergence_Rate} in the
following. Let $\varphi(x) =
\frac{1}{\lambda_{\theta_i}-x}1_{x\leq\lambda_0}$, the first
inequality can be reformulated as
\begin{equation}\label{Used_In_Proving_Lemmas_1}
|\int\varphi(x)dEF_m(x) - \int\varphi(x)dF(x)|\leq
\frac{C}{m}\|\varphi\|_{\max}
\end{equation}

Lemma \ref{GOE_Convergence_Rate} says
\begin{equation*}
|\int\phi(x)dEF_m(x) - \int\phi(x)dF(x)|\leq
\frac{C}{m}\|\phi\|_{\max}
\end{equation*}
if $\phi(x)=c\cdot 1_{(-\infty,a]}(x)$. To prove
\eqref{Used_In_Proving_Lemmas_1}, we approximate $\varphi(x)$ by
$\varphi_{\Delta}(x) = \sum c_i\cdot 1_{(-\infty,a_i]}(x)$, where
$\Delta$ is the division $a_1 < \cdots < a_k = \lambda_0$, and the
coefficients are $c_k = \varphi(a_k)$,$c_i =
\varphi(a_i)-\varphi(a_{i+1}), i\leq k-1$. Since $\varphi(x)$ is
nonnegative and monotone, we have $\sum |c_i|\leq
2\|\varphi\|_{\max}$, thus
\begin{equation*}
|\int\varphi_{\Delta}(x)dEF_m(x) - \int\varphi_{\Delta}(x)dF(x)|\leq
\sum \frac{C}{m}|c_i|\leq\frac{2C}{m}\|\varphi\|_{\max}
\end{equation*}
Let $\|\Delta\|=\max(a_i - a_{i-1})\rightarrow 0$, we get
\eqref{Used_In_Proving_Lemmas_1}. The second inequality is proved in
a similar fashion.
\end{proof}

Combine the previous three lemmas, we can establish deviation
inequalities for $L_1$ and $L_2$ as follows.  When $0\leq
(\lambda_{\theta_i}-\lambda_0)t\leq\frac{\sqrt{2}\delta(1+\sqrt{2}-\delta)}{(1-\delta)^2}$
\begin{align}\label{GOE_L1_Deviation_Inequality}
&P(L_1-g(\lambda_{\theta_i})\geq t +
\frac{C}{m(\lambda_{\theta_i}-\lambda_0)}) \leq
2e^{-\frac{1}{2}m(\lambda_{\theta_i}-\lambda_0)^2(\frac{1-\delta}{1+\sqrt{2}-\delta})^2
t^2}\\
&P(L_1-g(\lambda_{\theta_i})\leq -t -
\frac{C}{m(\lambda_{\theta_i}-\lambda_0)}) \leq
2e^{-\frac{1}{2}m(\lambda_{\theta_i}-\lambda_0)^2(\frac{1-\delta}{1+\sqrt{2}-\delta})^2
t^2}\notag
\end{align}
When $0\leq
(\lambda_{\theta_i}-\lambda_0)^2t\leq\frac{\sqrt{2}\delta(1+\sqrt{2}-\delta)}{(1-\delta)^2}$
\begin{align}\label{GOE_L2_Deviation_Inequality}
&P(L_2-g'(\lambda_{\theta_i})\geq t +
\frac{C}{m(\lambda_{\theta_i}-\lambda_0)^2}) \leq
2e^{-\frac{1}{2}m(\lambda_{\theta_i}-\lambda_0)^4(\frac{1-\delta}{1+\sqrt{2}-\delta})^2
t^2}\\
&P(L_2-g'(\lambda_{\theta_i})\leq -t -
\frac{C}{m(\lambda_{\theta_i}-\lambda_0)^2}) \leq
2e^{-\frac{1}{2}m(\lambda_{\theta_i}-\lambda_0)^4(\frac{1-\delta}{1+\sqrt{2}-\delta})^2
t^2}\notag
\end{align}

The first two terms in \eqref{GOE_Lambda_Minus_xAx} are bounded
above by $\frac{2r\sigma}{n}+|g_{i,i}|$.  Using
\eqref{GOE_Stieltjes_At_Lambda_Theta},
\eqref{GOE_L1_Deviation_Inequality} and
\eqref{GOE_L2_Deviation_Inequality}, we can build deviation bounds
for the last two terms in \eqref{GOE_Lambda_Minus_xAx} (the routine
details are omitted). We choose
$\lambda_0=\frac{1}{2}(2\sigma+\lambda_{\theta_i})$. The end result
is
\begin{equation}\label{GOE_Lambda_Minus_xAx_Bound}
P(\{\lambda_{\theta_i}-x^* Ax\geq t + C_1 \sigma r/n\}\cap B)\leq 8
e^{-\frac{C_2 m (\theta_i - \sigma)^5 t^2}{\sigma^4 (\theta_i +
\sigma)^3}}
\end{equation}
Let $B_i = \{\lambda_{\theta_i}-x^* Ax\geq t + C_1 \sigma r/n\}\cap
B$; then the $1st,\cdots,i-th$ approximate eigenvectors we built are
valid on $B\cap(\cup_{j=1}^i B_j)^c$, so
\begin{align}
&P(\lambda_i (A)\leq\lambda_{\theta_i}-t-C_1 \sigma r/n)\\
&\leq P(\ (B\cap(\cup_{j=1}^i B_j)^c)^c\ )\leq P(B^c)
+ \sum_{j=1}^i P(B_j)\notag\\
&\leq e^{-\frac{m(\theta_i - \sigma)^4}{16\sigma^2 \theta_i^2}} +
8i\cdot e^{-\frac{C_2 m(\theta_i - \sigma)^5 t^2}{\sigma^4 (\theta_i
- \sigma)^3}}\notag
\end{align}

\section{Spiked Population Model:  Proof of Theorem \ref{SPM_Deviation} and Theorem \ref{SPM_Deviation_2}}

Since the distribution of $G$ is orthogonal invariant, we can assume
\begin{equation*}
\Sigma^{\frac{1}{2}} = \diag\{\theta_1,\cdots,\theta_{r+s},1\cdots,1\}
\end{equation*}
The proof is divided into three subsections, corresponding to part
(i) of theorem \ref{SPM_Deviation}, part (i) of theorem
\ref{SPM_Deviation_2}, and part (ii) of both theorems. As mentioned
before, the proof of part (i) of both theorems follows the same idea
used in proving part (i) of theorem \ref{GOE_Deviation}; the proof
of part (ii) of both theorems is similar to the proof of part (ii)
of theorem \ref{GOE_Deviation}.

\subsection{Upper Tail Bound for the Largest Eigenvalues}

We prove part (i) of theorem \ref{SPM_Deviation} in this section.

The $r > 0$ case. By the minimax characterization of eigenvalues, for $1\leq i\leq r$, we have
\begin{align}\label{SPM_Control_By_Norm_1}
\lambda_i (S_n) &= \min_{x^1,\cdots,x^{i-1}\in\mathbb{R}^p} \max_{x\in S^{p-1}\cap\{x^1,\cdots,x^{i-1}\}^{\perp}} x^*S_n x\notag\\
&\leq \max_{x\in S^{p-1},x_1 = \cdots = x_{i-1} = 0}x^*S_n x = \sigma_i^2
\end{align}
$\sigma_i$ is the largest singular value of the lower $(p-i+1)\times
n$ submatrix of $\frac{1}{\sqrt{n}}\Sigma^{\frac{1}{2}}G$. Let
$X_{x,y} = x^*  (  \frac{1}{\sqrt{n}}  \Sigma^{\frac{1}{2}}G)  y,
x\in S^{p-1},y\in S^{n-1}$, then
\begin{equation*}
\sigma_i = \max\{X_{x,y}: x\in S^{p-1}\cap V_i, y\in S^{n-1}\}
\end{equation*}
where $V_i = \{x\in\mathbb{R}^p: x_1 = \cdots = x_{i-1} = 0\}$. Using lemma \ref{Epsilon_Net_Lemma_Singularvalue}, we have
\begin{equation}\label{SPM_Epsilon_Discretization_1}
\sigma_i\leq\frac{1}{1-\epsilon}\max\{X_{x,y}: x\in\mathscr{X}_i,y\in S^{n-1}\},\quad 0<\epsilon<1
\end{equation}
When $i=r$, $\mathscr{X}_r$ is an $\epsilon$-net for $(S^{p-1}\cap
V_r\cap\{x\in\mathbb{R}^p: x_r\geq0\},|\cdot|)$. When $1\leq i\leq
r-1$, $\mathscr{X}_i$ is an $\epsilon$-net for $(S^{p-1}\cap
V_i,|\cdot|)$.

By lemma \ref{Epsilon_Net_Construction_1} and lemma \ref{Epsilon_Net_Construction_2}, we can pick
\begin{equation}\label{SPM_Epsilon_Net_Structure_1}
\mathscr{X}_r = \cup_{u\in\mathscr{N}_r}\mathscr{E}_r (u)
\end{equation}
where $\mathscr{E}_r (u) = \{x\in S^{p-1}: x_1=\cdots =
x_{r-1}=0,x_r=u\},u\in [0,1]$, $\mathscr{N}_r$ is a finite subset of
$[0,1]$ with $|\mathscr{N}_r|\leq\frac{2}{\epsilon}$. When $1\leq
i\leq r-1$
\begin{equation}\label{SPM_Epsilon_Net_Structure_2}
\mathscr{X}_i = \cup_{u\in\mathscr{N}_i}\mathscr{E}_i (u)
\end{equation}
where $\mathscr{E}_i (u) = \{x\in S^{p-1}: x_1=\cdots = x_{i-1} = 0,
(x_i,\cdots,x_r)=u\},u \in B^{r-i+1}_2$, $\mathscr{N}_i$ is a finite
subset of $B^{r-i+1}_2$ with
$|\mathscr{N}_i|\leq\frac{4(r-i+1)^2}{\epsilon}(1 +
\frac{2(r-i+1)}{(r-i)\epsilon})^{r-i}$.

Define $L_{i,u} = \max_{x\in\mathscr{E}_i (u),y\in S^{n-1}} X_{x,y}$, then
\begin{equation}\label{SPM_Stratified_Maximization}
\max\{X_{x,y}: x\in\mathscr{X}_i,y\in S^{n-1}\} = \max\{L_{i,u}: u\in\mathscr{N}_i\}
\end{equation}
\eqref{SPM_Control_By_Norm_1} and \eqref{SPM_Epsilon_Discretization_1} and \eqref{SPM_Stratified_Maximization} imply
\begin{equation}\label{SPM_Lambda_By_Epsilon_1}
\sqrt{\lambda_i (S_n)}\leq\frac{1}{1-\epsilon}\max\{L_{i,u}: u\in\mathscr{N}_i\},\quad 0<\epsilon<1
\end{equation}

We will be using \eqref{SPM_Lambda_By_Epsilon_1} to establish an
upper tail bound for $\lambda_i (S_n)$. As in the GOE case, we will
prove a tail bound for each $L_{i,u}$, then take a union bound over
$u\in\mathscr{N}_i$.

Let $u = (u_i,\cdots,u_r)\in B^{r-i+1}_2$, $x\in\mathscr{E}_i (u), y\in S^{n-1}$, consider
\begin{equation}\label{SPM_Yxy_Definition_1}
Y_{x,y} = \frac{1}{\sqrt{n}}\sqrt{\sum_{k=i}^r \theta_k^2 u_k^2 + 1 - |u|^2} \sum_{j=1}^n y_j \omega_j + \frac{1}{\sqrt{n}}\sum_{j = r+1}^p x'_j \beta_j
\end{equation}
in which $\omega_1,\cdots,\omega_n,\beta_{r+1},\cdots,\beta_p$ are
i.i.d. $\mathcal{N}(0,1)$ random variables; and
\begin{equation*}
x' = (x'_{r+1},\cdots,x'_p) = (\theta_{r+1}x_{r+1},\cdots,\theta_{r+s}x_{r+s},x_{r+s+1},\cdots,x_p)
\end{equation*}
Then for $x,\tilde{x}\in\mathscr{E}_i (u),y,\tilde{y}\in S^{n-1}$, we have
{\allowdisplaybreaks\begin{align}\label{SPM_Slepian_Comparison_1}
&E|X_{x,y} - X_{\tilde{x},\tilde{y}}|^2\\
&= \frac{1}{n}E|\sum_{k=i}^r \sum_{j=1}^n (\theta_k u_k y_j - \theta_k u_k \tilde{y}_j)g_{k,j} + \sum_{k = r+1}^p \sum_{j=1}^n (x'_k y_j - \tilde{x}'_k \tilde{y}_j)g_{k,j}|^2\notag\\
&= \frac{1}{n}\sum_{k=i}^r\sum_{j=1}^n \theta_k^2 u_k^2 (y_j - \tilde{y}_j)^2 + \frac{1}{n}\sum_{k=r+1}^p\sum_{j=1}^n ((x'_k)^2 y_j^2 +(\tilde{x}'_k)^2\tilde{y}_j^2 - 2 x'_k\tilde{x}'_k y_j \tilde{y}_j)\notag\\
&= \frac{1}{n}(\sum_{k=i}^r \theta_k^2 u_k^2)|y - \tilde{y}|^2 + \frac{1}{n}(|x'|^2 + |\tilde{x}'|^2) - \frac{2}{n}(x',\tilde{x}')(y,\tilde{y})\notag\\
&= \frac{1}{n}(\sum_{k=i}^r \theta_k^2 u_k^2 + 1 - |u|^2)|y - \tilde{y}|^2 + \frac{1}{n}|x'-\tilde{x}'|^2\notag\\
&\qquad - \frac{2}{n}(1 - |u|^2 - (x',\tilde{x}'))(1 - (y,\tilde{y}))\notag\\
&\leq \frac{1}{n}(\sum_{k=i}^r \theta_k^2 u_k^2 + 1 - |u|^2)|y - \tilde{y}|^2 + \frac{1}{n}|x'-\tilde{x}'|^2
= E|Y_{x,y} - Y_{\tilde{x},\tilde{y}}|^2\notag
\end{align}}
The maximum of $\{Y_{x,y}: x\in\mathscr{E}_i (u),y\in S^{n-1}\}$ is
reached when $x'$ is a multiple of $(\beta_{r+1},\cdots,\beta_{p})$
and $y$ is a multiple of $(\omega_1,\cdots,\omega_n)$. Since
$\theta_{r+j}\leq 1, |x'|\leq |(x_{r+1},\cdots,x_p)| =
\sqrt{1-|u|^2}$, we have
\begin{align*}
\max_{x\in\mathscr{E}_i (u),y\in S^{n-1}} Y_{x,y}=\frac{1}{\sqrt{n}}\sqrt{\sum_{k=i}^r \theta_k^2 u_k^2 + 1 - |u|^2}\sqrt{\sum_{j=1}^n \omega_j^2} + \frac{1}{\sqrt{n}}|x'|\sqrt{\sum_{j=r+1}^p \beta_j^2}&\\
\leq \frac{1}{\sqrt{n}}\sqrt{\sum_{k=i}^r \theta_k^2 u_k^2 + 1 -
|u|^2}\sqrt{\sum_{j=1}^n \omega_j^2} +
\frac{\sqrt{1-|u|^2}}{\sqrt{n}}\sqrt{\sum_{j=r+1}^p \beta_j^2}&
\end{align*}
Using Slepian's lemma, we have
\begin{align*}
E[\max_{x\in\mathscr{E}_i (u),y\in S^{n-1}}X_{x,y}]&\leq E[\max_{x\in\mathscr{E}_i (u),y\in S^{n-1}}Y_{x,y}]\\
&\leq \sqrt{\sum_{k=i}^r \theta_k^2 u_k^2 + 1 - |u|^2} + \sqrt{\frac{p-r}{n}}\sqrt{1 - |u|^2}
\end{align*}
Therefore
\begin{equation*}
E[L_{i,u}]\leq  \sqrt{\sum_{k=i}^r \theta_k^2 u_k^2 + 1 - |u|^2} + \sqrt{c}\sqrt{1 - |u|^2} = \varphi(u),\quad c=\frac{p-r}{n}
\end{equation*}

When $\theta_i^2 > 1 + \sqrt{c}$, the maximum of $\varphi(u)$ is $\sqrt{\theta_i^2 + c\cdot\frac{\theta_i^2}{\theta_i^2 - 1}}$ and is attained when
\begin{equation}\label{SPM_Optimal_Position_1}
u = (\sqrt{\frac{(\theta_i^2 - 1)^2 - c}{(\theta_i^2 - 1)(\theta_i^2 - 1 +c)}},0,\cdots,0)
\end{equation}
When $\theta_i^2 \leq 1 + \sqrt{c}$, the maximum of $\varphi(u)$ is $\varphi(0) = 1 + \sqrt{c}$. Hence
\begin{equation}\label{SPM_Lu_Expectation_1}
E[L_{i,u}]\leq\sqrt{\lambda_{\theta_i,c}}
\end{equation}

Since $E[X_{x,y}^2] = \frac{1}{n}(\sum_{k=i}^r \theta_k^2 u_k^2 +
|x'|^2)\leq \frac{1}{n}\theta_i^2$, we can use
\eqref{SPM_Lu_Expectation_1} and apply proposition
\ref{Minimax_Gaussian_Concentration} to get
\begin{equation*}
P(L_{i,u}\geq \sqrt{\lambda_{\theta_i, c}} + t)\leq e^{-\frac{nt^2}{2\theta_i^2}},\quad t\geq 0
\end{equation*}
Let $\epsilon = \frac{(1-a)t}{\sqrt{\lambda_{\theta_i, c}} + t},
0<a<1$, then $(1-\epsilon) (\sqrt{\lambda_{\theta_i,c}} + t) =
\sqrt{\lambda_{\theta_i,c}} + at$. Using
\eqref{SPM_Lambda_By_Epsilon_1} and the above inequality, we have
\begin{align*}
P(\sqrt{\lambda_i (S_n)}\geq \sqrt{\lambda_{\theta_i,c}} + t)&\leq P(\max\{L_{i,u}: u\in\mathscr{N}_i\}\geq (1-\epsilon)(\sqrt{\lambda_{\theta_i,c}} + t))\\
&\leq \sum_{u\in\mathscr{N}_i}P(L_{i,u}\geq \sqrt{\lambda_{\theta_i,c}} + at)\\
&\leq |\mathscr{N}_i|e^{-\frac{na^2 t^2}{2\theta_i^2}}
\end{align*}
The final step is to use our bound on $|\mathscr{N}_i|$ and optimize
over $a\in (0,1)$. When
$t\geq\frac{\sqrt{r-i+1}\theta_i}{\sqrt{\delta (1-\delta)n}}$,
choose $a = \frac{1}{2}(1 + \sqrt{1 - \frac{4(r-i+1)\theta_i^2
}{nt^2}})$, then $a\geq 1-\delta\geq\frac{2}{3}$, $0 <\epsilon \leq
\frac{1}{3}$ (as needed in the use of
\eqref{SPM_Lambda_By_Epsilon_1} and lemma
\ref{Epsilon_Net_Construction_1}). When $i = r$, we use
$|\mathscr{N}_r|\leq\frac{2}{\epsilon}$; when $1\leq i\leq r-1$, we
use $|\mathscr{N}_i|\leq \frac{4(r-i+1)^2}{\epsilon}(1 +
\frac{2(r-i+1)}{(r-i)\epsilon})^{r-i}$. Then
\begin{equation*}
P(\sqrt{\lambda_i (S_n)}\geq \sqrt{\lambda_{\theta_i,c}} + t) \leq C_{r-i+1,\theta_i}(n)\cdot e^{-\frac{na^2 t^2}{2\theta_i^2}}
\leq C_{r-i+1,\theta_i}(n)\cdot e^{-\frac{(1-\delta)^2 nt^2}{2\theta_i^2}}
\end{equation*}
This finishes the proof of the $r>0$ case.

When $r = 0$, $\sqrt{\lambda_1 (S_n)} = \max_{x\in S^{p-1},y\in
S^{n-1}}X_{x,y}$, $X_{x,y} =
x^*(\frac{1}{\sqrt{n}}\Sigma^{\frac{1}{2}}G)y$. For $x\in S^{p-1},
y\in S^{n-1}$, define
\begin{equation*}
Y_{x,y} = \frac{1}{\sqrt{n}}\sum_{j=1}^n y_j \omega_j + \frac{1}{\sqrt{n}}\sum_{k=1}^p x_k' \beta_k
\end{equation*}
where $\omega_1,\cdots,\omega_n,\beta_1,\cdots,\beta_p$ are i.i.d. $\mathcal{N}(0,1)$ random variables and
\begin{equation*}
x' = (x_1',\cdots,x_p') = (\theta_1 x_1,\cdots,\theta_s x_s,x_{s+1},\cdots,x_p)
\end{equation*}
Similar to \eqref{SPM_Slepian_Comparison_1}, we have $E|X_{x,y} -
X_{\tilde{x},\tilde{y}}|^2\leq E|Y_{x,y} -
Y_{\tilde{x},\tilde{y}}|^2, \forall x,\tilde{x}\in S^{p-1}$,
$y,\tilde{y}\in S^{n-1}$. Thus $E[\sqrt{\lambda_1 (S_n)}]\leq
E[\max_{x\in S^{p-1},y\in S^{n-1}}Y_{x,y}]\leq 1 + \sqrt{c}$ by
Slepian's lemma. Using proposition
\ref{Minimax_Gaussian_Concentration}, we have
\begin{equation*}
P(\sqrt{\lambda_1 (S_n)}\geq 1 + \sqrt{c} + t)\leq e^{-\frac{nt^2}{2}},\quad t\geq 0
\end{equation*}

\subsection{Lower Tail Bound for the Smallest Eigenvalues}

We prove part (i) of theorem \ref{SPM_Deviation_2} in this section.

The $s>0$ case. By the max-min characterization of eigenvalues, for
$1\leq i\leq s$, we have
\begin{align}\label{SPM_Control_By_Norm_2}
\lambda_{p-i+1}(S_n) &= \max_{V\subset\mathbb{R}^p,\dim V = p-i+1} \min_{x\in S^{p-1}\cap V} x^*S_n x\notag\\
&\geq \min_{x\in S^{p-1}\cap W_i} x^*S_n x = \mu_i^2
\end{align}
where $W_i = \{x\in\mathbb{R}^p: x_{r+s-i+2} = \cdots = x_{r+s} = 0\}$ and
\begin{equation*}
\mu_i = \min_{x\in S^{p-1}\cap W_i}\max_{y\in S^{n-1}} X_{x,y},\quad X_{x,y} = x^*(\frac{1}{\sqrt{n}}\Sigma^{\frac{1}{2}}G)y
\end{equation*}
Using lemma \ref{Epsilon_Net_Lemma_Smallest_Singularvalue}, we have
\begin{equation}\label{SPM_Epsilon_Discretization_2}
\min_{x\in\mathscr{X}_i}\max_{y\in S^{n-1}} X_{x,y} \leq \mu_i + \epsilon\sqrt{\lambda_{1}(S_n)}
\end{equation}
When $r=0,i=s$, $\mathscr{X}_s$ is an $\epsilon$-net for
$(S^{p-1}\cap W_s\cap\{x\in\mathbb{R}^p: x_1\geq 0 \},|\cdot|)$. In
all other situations, $\mathscr{X}_i$ is an $\epsilon$-net for
$(S^{p-1}\cap W_i, |\cdot|)$.

By lemma \ref{Epsilon_Net_Construction_1} and lemma \ref{Epsilon_Net_Construction_2}, when $r=0,i=s$, we can arrange
\begin{equation}\label{SPM_Epsilon_Net_Structure_3}
\mathscr{X}_s = \cup_{u\in \mathscr{N}_s}\mathscr{E}_s (u)
\end{equation}
where $\mathscr{E}_s (u) = \{x\in\mathbb{R}^p: x_2 = \cdots = x_s =
0, x_1 = u\}, u\in [0,1]$, $\mathscr{N}_s$ is a finite subset of
$[0,1]$ with $|\mathscr{N}_s|\leq\frac{2}{\epsilon}$; in all other
situations
\begin{equation}\label{SPM_Epsilon_Net_Structure_4}
\mathscr{X}_i =\cup_{u\in\mathscr{N}_i}\mathscr{E}_i (u)
\end{equation}
where for $u\in B^{r+s-i+1}_2$
\begin{equation*}
\mathscr{E}_i (u) = \{x\in\mathbb{R}^p: x_{r+s-i+2} = \cdots = x_{r+s} = 0, (x_1,\cdots,x_{r+s-i+1}) = u\}
\end{equation*}
$\mathscr{N}_i$ is a finite subset of $B^{r+s-i+1}_2$ with
$|\mathscr{N}_i|\leq \frac{4(r+s-i+1)^2}{\epsilon}(1 +
\frac{2(r+s-i+1)}{(r+s-i)\epsilon})^{r+s-i}$.

Define $L_{i,u} = \min_{x\in\mathscr{E}_i (u)}\max_{y\in S^{n-1}} X_{x,y}$, then
\begin{equation}\label{SPM_Stratified_Minimization}
\min_{x\in\mathscr{X}_i}\max_{y\in S^{n-1}} X_{x,y} = \min_{u\in\mathscr{N}_i} L_{i,u}
\end{equation}
\eqref{SPM_Control_By_Norm_2}, \eqref{SPM_Epsilon_Discretization_2} and \eqref{SPM_Stratified_Minimization} imply
\begin{equation}\label{SPM_Lambda_By_Epsilon_2}
\min_{u\in\mathscr{N}_i}L_{i,u}\leq \sqrt{\lambda_{p-i+1}(S_n)} + \epsilon\sqrt{\lambda_1 (S_n)}
\end{equation}
To get a lower tail bound for $\lambda_{p-i+1}(S_n)$, we will
establish a lower tail bound for each $L_{i,u}$ then take an union
bound. The $\sqrt{\lambda_1 (S_n)}$ term will be dealt with using
the result of part (i) of theorem \ref{SPM_Deviation}.

For $x\in\mathscr{E}_i (u),y\in S^{n-1}$, consider
\begin{equation*}
Y_{x,y} = \frac{1}{\sqrt{n}}\sqrt{\sum_{k=1}^{r+s-i+1}\theta_k^2 u_k^2 + 1- |u|^2}\sum_{j=1}^n y_j \omega_j + \frac{1}{\sqrt{n}}\sum_{k=r+s+1}^p x_k\beta_k
\end{equation*}
in which $\omega_1,\cdots,\omega_n,\beta_{r+s+1},\cdots,\beta_p$ are
i.i.d. $\mathcal{N}(0,1)$ random variables. Similar to
\eqref{SPM_Slepian_Comparison_1}, for $x,\tilde{x}\in\mathscr{E}_i
(u), y,\tilde{y}\in S^{n-1}$, we have
\begin{align}\label{SPM_Slepian_Comparison_2}
&E|Y_{x,y} -Y_{\tilde{x},\tilde{y}}|^2 - E|X_{x,y} - X_{\tilde{x},\tilde{y}}|^2 \notag\\
&\quad= \frac{2}{n}(1-|u|^2 - \sum_{k=r+s+1}^p x_k\tilde{x}_k)(1 - (y,\tilde{y}))
\end{align}
This quantity is always non-negative, and equals zero when $x =
\tilde{x}$. To proceed, we will use the following generalization of
Slepian's lemma.
\begin{proposition}\emph{\cite{gordon85}}\label{Gordon_1}
Let $(X_{i,j})_{1\leq i\leq n,1\leq j\leq m}$ and $(Y_{i,j})_{1\leq
i \leq n,1\leq j\leq m}$ be two centered Gaussian processes. Assume
\begin{align*}
&E|X_{i,j} - X_{i,k}|^2 \leq E|Y_{i,j} - Y_{i,k}|^2\\
&E|X_{i,j} - X_{l,k}|^2 \geq E|Y_{i,j} - Y_{l,k}|^2, \quad i\neq l
\end{align*}
Then
\begin{equation*}
E[\min_{i}\max_{j}X_{i,j}]\leq E[\min_{i}\max_{j}Y_{i,j}]
\end{equation*}
\end{proposition}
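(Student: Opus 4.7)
The plan is to prove this Gordon-type inequality by Gaussian interpolation, extending the standard argument for Slepian's lemma. First, pass to independent copies of $(X_{i,j})$ and $(Y_{i,j})$, and form the interpolating family
\[
Z^\lambda_{i,j} = \sqrt{\lambda}\, X_{i,j} + \sqrt{1-\lambda}\, Y_{i,j}, \qquad \lambda \in [0,1].
\]
Approximate the non-smooth $\min\max$ by the smooth
\[
F_\beta(z) = -\frac{1}{\beta}\log\sum_i \exp\!\Bigl(-\log\sum_j e^{\beta z_{i,j}}\Bigr),
\]
which satisfies $F_\beta(z) \to \min_i \max_j z_{i,j}$ as $\beta \to \infty$. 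It suffices to show that $\lambda \mapsto E[F_\beta(Z^\lambda)]$ is non-increasing on $[0,1]$; then $E[F_\beta(X)] \leq E[F_\beta(Y)]$, and we pass to the limit by dominated convergence.

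The central computation is the derivative formula obtained from Gaussian integration by parts (Stein's lemma) applied to the independent Gaussian fields $X$ and $Y$:
\[
\frac{d}{d\lambda}E[F_\beta(Z^\lambda)] = \frac{1}{2}\sum_{(i,j),(l,k)} \bigl(EX_{i,j}X_{l,k} - EY_{i,j}Y_{l,k}\bigr)\, E\bigl[\partial^2_{(i,j)(l,k)}F_\beta(Z^\lambda)\bigr].
\]
Introduce the soft weights $p_i$ (soft-argmin over rows) and $q_{i,j}$ (soft-argmax within row $i$), so that $\partial_{(i,j)}F_\beta = p_i q_{i,j}$. A direct differentiation shows that the cross-row second partials $\partial^2_{(i,j)(l,k)}F_\beta$ ($i \neq l$) are non-negative, while the same-row off-diagonal second partials ($i=l$, $j\neq k$) are non-positive.

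Expanding the hypothesis gives $EX_{i,j}X_{i,k}-EY_{i,j}Y_{i,k} \geq \tfrac{1}{2}(v_{i,j}+v_{i,k})$ for same-row pairs and the reversed inequality for cross-row pairs, where $v_{i,j} := EX_{i,j}^2 - EY_{i,j}^2$. Combining these bounds with the sign information on $\partial^2 F_\beta$ and the row-summation identity $\sum_k \partial^2_{(i,j)(i,k)}F_\beta = -\beta p_i(1-p_i)q_{i,j}$ (which follows from $\sum_k q_{i,k}=1$), one shows that the same-row contribution to $\frac{d}{d\lambda}E[F_\beta(Z^\lambda)]$ is dominated by $-\beta\, E\sum_i p_i(1-p_i)\sum_j v_{i,j}q_{i,j}$, while the cross-row contribution is dominated by its exact negative (after using $\sum_j q_{i,j}=1$ symmetrically). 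The two cancel, yielding $\frac{d}{d\lambda}E[F_\beta(Z^\lambda)] \leq 0$.

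The main obstacle is the sign bookkeeping; in particular the diagonal terms $(i,j)=(l,k)$ involve $v_{i,j}$, whose sign is not controlled by the hypothesis. The resolution is precisely the row-summation identity above, which absorbs these terms automatically and is why no equal-variance assumption is needed, just as in the Sudakov--Fernique inequality. Once this cancellation is in place the monotonicity of $\lambda\mapsto E[F_\beta(Z^\lambda)]$ is immediate, and sending $\beta\to\infty$ together with continuity of expectation on the finite index set $\{1,\dots,n\}\times\{1,\dots,m\}$ concludes the proof.
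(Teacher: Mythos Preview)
Your argument is correct. The paper does not prove this proposition at all; it simply cites Gordon \cite{gordon85} (see the remark following Proposition~\ref{Slepian_1}: ``A proof of proposition \ref{Slepian_1} and its generalization (proposition \ref{Gordon_1} below) can be found in \cite{gordon85}''). So there is no ``paper's own proof'' to compare against.

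For the record, your interpolation argument is the standard modern route and the computations check out. With $p_i=S_i^{-1}/T$ and $q_{i,j}=e^{\beta z_{i,j}}/S_i$ one indeed gets $\partial_{(i,j)}F_\beta=p_iq_{i,j}$, and the second partials are $\beta p_ip_lq_{i,j}q_{l,k}\geq 0$ for $i\neq l$ and $-\beta p_iq_{i,j}q_{i,k}(2-p_i)\leq 0$ for $i=l$, $j\neq k$. The row-sum identity $\sum_k\partial^2_{(i,j)(i,k)}F_\beta=-\beta p_i(1-p_i)q_{i,j}$ and the complementary cross-row sum $\sum_{l\neq i,k}\partial^2_{(i,j)(l,k)}F_\beta=\beta p_i(1-p_i)q_{i,j}$ (both following from $\sum_{l,k}\partial_{(l,k)}F_\beta\equiv 1$) produce exactly the cancellation you describe, so the unsigned diagonal terms $v_{i,j}$ drop out and $\frac{d}{d\lambda}E[F_\beta(Z^\lambda)]\leq 0$. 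The limit $\beta\to\infty$ is unproblematic since $|F_\beta(z)-\min_i\max_jz_{i,j}|\leq \beta^{-1}\log(nm)$ uniformly.
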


This proposition implies
\begin{align*}
&E[\min_{x\in\mathscr{E}_i (u)}\max_{y\in S^{n-1}}X_{x,y}]\geq E[\min_{x\in\mathscr{E}_i (u)}\max_{y\in S^{n-1}}Y_{x,y}]\\
& = \frac{1}{\sqrt{n}}\sqrt{\sum_{k=1}^{r+s-i+1}\theta_k^2 u_k^2 + 1- |u|^2}E[\sqrt{\sum_{j=1}^n \omega_j^2}] -\frac{\sqrt{1 - |u|^2}}{\sqrt{n}}E[\sqrt{\sum_{k=r+s+1}^p \beta_k^2}]\\
&\geq \frac{1}{\sqrt{n}}\sqrt{\sum_{k=1}^{r+s-i+1}\theta_k^2 u_k^2 + 1- |u|^2}\frac{\sqrt{2}\Gamma(\frac{n+1}{2})}{\Gamma(\frac{n}{2})} - \frac{\sqrt{p-r-s}}{\sqrt{n}}\sqrt{1-|u|^2}\\
&\geq (1-\frac{1}{2n})\sqrt{\sum_{k=1}^{r+s-i+1}\theta_k^2 u_k^2 + 1- |u|^2} - \sqrt{c'}\sqrt{1-|u|^2},\quad c' = \frac{p-r-s}{n}
\end{align*}
The last step uses Stirling's approximation for $\Gamma$-functions.
Therefore
\begin{equation*}
E[L_{i,u}]\geq \sqrt{\sum_{k=1}^{r+s-i+1}\theta_k^2 u_k^2 + 1- |u|^2} - \sqrt{c'}\sqrt{1-|u|^2} - \frac{\theta_1\vee 1}{2n} = \varphi(u) - \frac{\theta_1\vee 1}{2n}
\end{equation*}

To avoid heavy notation, let $\hat{\theta}_i = \theta_{r+s-i+1},
1\leq i\leq s$, i.e. $\hat{\theta}_i^2$ is the $i$-th smallest
eigenvalue of the population covariance matrix $\Sigma$. When
$\hat{\theta}_i^2 < 1 - \sqrt{c'}$, the minimum of $\varphi(u)$ is
$\sqrt{\hat{\theta}_i^2 +
c'\cdot\frac{\hat{\theta}_i^2}{\hat{\theta}_i^2 - 1}}$ and is
attained when
\begin{equation}\label{SPM_Optimal_Position_2}
u = (0,\cdots,0,\sqrt{\frac{(\hat{\theta}_i^2 - 1)^2 - c'}{(\hat{\theta}_i^2 - 1)(\hat{\theta}_i^2 - 1 + c')}})
\end{equation}
When $\hat{\theta}_i^2 \geq 1 - \sqrt{c'}$, the minimum of
$\varphi(u)$ is $\varphi(0) = 1 - \sqrt{c'}$. Hence
\begin{equation}\label{SPM_Lu_Expectation_2}
E[L_{i,u}] \geq \sqrt{\lambda_{\hat{\theta}_i, c'}} - \frac{\theta_1\vee 1}{2n}
\end{equation}

Since $E[X_{x,y}^2] = \frac{1}{n}(\sum_{k=1}^{r+s-i+1}\theta_k^2
u_k^2 + 1 - |u|^2)\leq \frac{\theta_1^2\vee 1}{n}$, we can use
\eqref{SPM_Lu_Expectation_2} and apply proposition
\ref{Minimax_Gaussian_Concentration} to get
\begin{equation*}
P(L_{i,u}\leq \sqrt{\lambda_{\hat{\theta}_i,c'}} - \frac{\theta_1\vee 1}{2n} - t)\leq e^{\frac{-nt^2}{2(\theta_1^2\vee 1)}}
\end{equation*}
Let $(1-a)t = \epsilon \tilde{t},0<a<1$,
\eqref{SPM_Lambda_By_Epsilon_2} and the above inequality imply
\begin{align*}
&P(\sqrt{\lambda_{p-i+1}(S_n)}\leq
\sqrt{\lambda_{\hat{\theta}_i,c'}} - \frac{\theta_1\vee 1}{2n} - t)\\
&\quad\leq P(\min_{u\in\mathscr{N}_i}L_{i,u} \leq
\sqrt{\lambda_{\hat{\theta}_i, c'}} - \frac{\theta_1\vee 1}{2n} -
at) + P(\sqrt{\lambda_1 (S_n)}\geq \tilde{t})\\
&\quad\leq\sum_{u\in\mathscr{N}_i}
P(L_{i,u}\leq\sqrt{\lambda_{\hat{\theta}_i, c'}} -
\frac{\theta_1\vee 1}{2n} - at) +P(\sqrt{\lambda_1 (S_n)}\geq
\tilde{t})\\
&\quad\leq |\mathscr{N}_i|e^{-\frac{na^2 t^2}{2(\theta_1^2\vee 1)}}
+ P(\sqrt{\lambda_1 (S_n)}\geq \tilde{t})
\end{align*}
We will use the bound for $|\mathscr{N}_i|$ and make an appropriate
choice of $a$ and $\tilde{t}$. When $r>0$, we pick $\tilde{t} =
\sqrt{\lambda_{\theta_1,c}}+t$, $a=\frac{1}{2}(1 + \sqrt{1 -
\frac{4(r+s-i+1)\theta_1^2}{nt^2}})$; when $r=0$, we pick $\tilde{t}
= 1 + \sqrt{c} + t$, $a = \frac{1}{2}(1 + \sqrt{1 -
\frac{4(r+s-i+1)}{nt^2}})$. This gives
\begin{equation*}
P(\sqrt{\lambda_{p-i+1}(S_n)}\leq \sqrt{\lambda_{\hat{\theta}_i,c'}}
- \frac{\theta_1\vee 1}{2n} - t)\leq C'_i (n) e^{-\frac{(1-\delta)^2
nt^2}{2(\theta_1^2\vee 1)}}
\end{equation*}
This finishes the proof of the $s>0$ case.

When $s=0$, we have
\begin{equation*}
\sqrt{\lambda_p (S_n)}=\min_{x\in S^{p-1}}\max_{y\in
S^{n-1}}X_{x,y},\quad X_{x,y} = x^*
(\frac{1}{\sqrt{n}}\Sigma^{\frac{1}{2}}G)y
\end{equation*}
For $u\in B^r_2$, let $\mathscr{E}(u)=\{x\in S^{p-1}:
(x_1,\cdots,x_r)=u\}$. For $x\in\mathscr{E}(u),y\in S^{n-1}$, define
\begin{equation*}
Y_{x,y}=\frac{1}{\sqrt{n}}\sqrt{\sum_{k=1}^r \theta_k^2 u_k^2 + 1 - |u|^2}\sum_{j=1}^n y_j \omega_j + \frac{1}{\sqrt{n}}\sum_{k=r+1}^p x_k \beta_k
\end{equation*}
where $\omega_1,\cdots,\omega_n,\beta_{r+1},\cdots,\beta_p$ are
i.i.d. $\mathcal{N}(0,1)$ random variables. Similar to
\eqref{SPM_Slepian_Comparison_1}, for
$x,\tilde{x}\in\mathscr{E}(u),y,\tilde{y}\in S^{n-1}$, we have
\begin{equation*}
E|Y_{x,y} - Y_{\tilde{x},\tilde{y}}|^2 - E|X_{x,y} - X_{\tilde{x},\tilde{y}}|^2 = \frac{2}{n}(1-|u|^2 - \sum_{k=r+1}^p x_k \tilde{x}_k)(1-(y,\tilde{y}))
\end{equation*}
This is nonnegative, and equals zero when $x=\tilde{x}$. Using proposition \ref{Gordon_1}, when $r=0$, we have
\begin{equation*}
E[\min_{x\in S^{p-1}}\max_{y\in S^{n-1}}X_{x,y}]\geq E[\min_{x\in S^{p-1}}\max_{y\in S^{n-1}}Y_{x,y}]\geq 1 - \sqrt{c'}
\end{equation*}
when $r>0$, we have
\begin{equation*}
E[\min_{x\in\mathscr{E}(u)}\max_{y\in S^{n-1}}X_{x,y}]\geq E[\min_{x\in\mathscr{E}(u)}\max_{y\in S^{n-1}}Y_{x,y}]
\geq 1 - \sqrt{c'} -\frac{\theta_1}{2n}
\end{equation*}
The derivation is similar to the $s>0$ case.

Let $L_u = \min_{x\in\mathscr{E}(u)}\max_{y\in S^{n-1}}X_{x,y}$, the
previous two inequalities and proposition
\ref{Minimax_Gaussian_Concentration} imply
\begin{equation}\label{SPM_RS_Zero_Bound}
P(\sqrt{\lambda_p (S_n)}\leq 1 - \sqrt{c'} - t)\leq e^{-\frac{nt^2}{2}},\quad t\geq 0, r=0
\end{equation}
This is the result for the $r=s=0$ case; and
\begin{align*}
P(L_u \leq 1 - \sqrt{c'} - \frac{\theta_1}{2n}-t)\leq e^{-\frac{nt^2}{2\theta_1^2}},\quad t\geq 0, r>0
\end{align*}

When $r>0$, we can use an $\epsilon$-net argument analogous to the one used in establishing \eqref{SPM_Lambda_By_Epsilon_2} to get
\begin{equation*}
\min_{u\in\mathscr{N}}L_u \leq\sqrt{\lambda_p (S_n)} + \epsilon\sqrt{\lambda_1 (S_n)}
\end{equation*}
$\mathscr{N}$ is a finite set with
$|\mathscr{N}|\leq\frac{2}{\epsilon}$ when $r=1$; and
$|\mathscr{N}|\leq\frac{4r^2}{\epsilon}(1+\frac{2(r-1)}{r\epsilon})^{r-1}$
when $r>1$.

Let $(1-a)t=\epsilon (\sqrt{\lambda_{\theta_1,c}}+t),0<a<1$. Using
the previous two inequalities and the result of part (i) of theorem
\ref{SPM_Deviation}, we have
\begin{align*}
&P(\sqrt{\lambda_p (S_n)}\leq 1 - \sqrt{c'} - \frac{\theta_1}{2n}-t)\\
&\quad\leq P(\min_{u\in\mathscr{N}}L_u \leq 1 - \sqrt{c'} - \frac{\theta_1}{2n} -at) + P(\sqrt{\lambda_1 (S_n)}\geq \sqrt{\lambda_{\theta_1,c}}+t)\\
&\quad \leq |\mathscr{N}|e^{-\frac{na^2 t^2}{2\theta_1^2}} + C_{r,\theta_1}(n)e^{-\frac{(1-\delta)^2 nt^2}{2\theta_1^2}}
\end{align*}
Using the bound for $|\mathscr{N}|$ and choosing $a=\frac{1}{2}(1+\sqrt{1 - \frac{4r\theta_1^2}{nt^2}})$ gives
\begin{equation*}
P(\sqrt{\lambda_p (S_n)}\leq 1 - \sqrt{c'} - \frac{\theta_1}{2n}-t)\leq 2 C_{r,\theta_1}(n)e^{-\frac{(1-\delta)^2 nt^2}{2\theta_1^2}}
\end{equation*}
This together with \eqref{SPM_RS_Zero_Bound} finishes the proof of the $s=0$ case.

\subsection{Approximate Eigenvectors}
We prove part (ii) of both theorem \ref{SPM_Deviation} and
\ref{SPM_Deviation_2} in this section. The proof uses the same
method for proving part (ii) of theorem \ref{GOE_Deviation}. So we
will focus on the construction of approximate eigenvectors and omit
other details.

Let's first consider approximate eigenvectors associated with the
largest eigenvalues.  Let $\tilde{G}$ be the lower $(p-r-s)\times n$
submatrix of $\frac{1}{\sqrt{n}}G$. Let $(1+\sqrt{c})^2 < \lambda_0
< \lambda_{\theta_i,c}$,
$B=\{\lambda_{max}(\tilde{G}^*\tilde{G})\leq \lambda_0\}$, then
\eqref{SPM_Bound_1_Null_Case} in (i) implies
\begin{equation}\label{SPM_Truncation_Error_Prob}
P(B^c) \leq e^{-\frac{1}{2}n(\sqrt{\lambda_0} - 1 - \sqrt{c})^2}
\end{equation}
We will construct approximate eigenvectors on the event $B$.

Let $x\in S^{p-1}$ be such that $x_1=\cdots =
x_{i-1}=x_{i+1}=\cdots= x_{r+s}=0$, $x_i= \sqrt{\frac{(\theta_i^2 -
1)^2 - c}{(\theta_i^2 - 1)(\theta_i^2 - 1 + c)}}$, and
\begin{align}\label{SPM_Schur_Complement_Eigenvector}
&\tilde{x} = (x_{r+s+1},\cdots,x_p)^t = -
\sqrt{1-x_i^2}\frac{R\tilde{G}v}{\sqrt{\lambda L_2 +
L_1}}\\
& v = \frac{1}{\sqrt{n}}(g_{i,1},\cdots,g_{i,n})^t\notag
\end{align}
where $\lambda=\lambda_{\theta_i,c}$, $R = (\tilde{G}\tilde{G}^* -
\lambda I)^{-1}$, $S = (\tilde{G}^* \tilde{G} - \lambda I)^{-1}$,
$L_1 = v^* S v, L_2 = v^* S^2 v$. Then $x^* S_n x = |y|^2$ with
\begin{align*}
y&=(\frac{1}{\sqrt{n}}G^*)\Sigma^{\frac{1}{2}}x = \theta_i x_i v +
\tilde{G}^*\tilde{x}\\
&=\theta_i x_i v - \frac{\sqrt{1-x_i^2}}{\sqrt{\lambda L_2 +
L_1}}\tilde{G}^* R \tilde{G}v\\
&=\theta_i x_i v - \frac{\sqrt{1-x_i^2}}{\sqrt{\lambda L_2 + L_1}}
(I +
 \lambda S)v\\
&= (aI - \lambda bS)v
\end{align*}
where $a = \theta_i x_i - b, b = \frac{\sqrt{1-x_i^2}}{\sqrt{\lambda
L_2 + L_1}}$. Therefore
\begin{equation}\label{SPM_Lambda_Minus_xSx}
\lambda - x^*S_n x = \lambda (1 - \lambda b^2 L_2) - a^2 |v|^2 +
2ab\lambda L_1
\end{equation}

To build a deviation inequality for $\lambda - x^* S_n x$, we will
prove $1 - \lambda b^2 L_2 \approx 0$, $a\approx 0$, then take a
union bound in \eqref{SPM_Lambda_Minus_xSx}. This is accomplished by
proving $L_1\approx g(\lambda)$, $L_2\approx g'(\lambda)$, where
$g(z)$ is the Stieltjes transform of the Marcenko-Pastur
distribution, see \cite{mp67}. More precisely, as
$n\rightarrow\infty$ and holding the ratio $p/n$ constant, the
spectral distribution of $\tilde{G}^*\tilde{G}$ converges to a
deterministic limiting distribution with Stieltjes transform
\begin{equation*}
g(z) = \frac{c-1-z + \sqrt{(z-1-c)^2 - 4c}}{2z}
\end{equation*}
Thus we can apply the same argument used in proving part (ii) of
theorem \ref{GOE_Deviation} to get
\begin{equation}\label{SPM_Stieltjes_At_Lambda_Theta}
L_1\approx g(\lambda) = -\frac{1}{\theta_i^2},\quad L_2 \approx
g'(\lambda) = \frac{(\theta_i^2 - 1)^2}{\theta_i^4 ((\theta_i^2 -
1)^2 - c)}
\end{equation}
The only modification is that we need proposition
\ref{MP_Convergence_Rate}, which is a rate of convergence result for
sample covariance type matrices, instead of proposition
\ref{GOE_Convergence_Rate}.

The above is the proof of part (ii) of theorem \ref{SPM_Deviation}.
The proof of part (ii) of theorem \ref{SPM_Deviation_2} is similar.
In this case, we will build approximate eigenvectors on
$B=\{\lambda_{\min}(\tilde{G}^* \tilde{G})\geq \lambda_0\}$,
$\lambda_{\theta_{r+s-i+1},c'} < \lambda_0 < (1 - \sqrt{c'})^2$, and
use \eqref{SPM_Bound_2_Null_Case_1} and
\eqref{SPM_Bound_2_Null_Case_2} to get a bound for $P(B^c)$ similar
to \eqref{SPM_Truncation_Error_Prob}. The construction of
approximate eigenvectors is done by changing $i$ to $r+s-i+1$ in the
above argument.

\section{Summary}
In this work we considered the extreme eigenvalues of matrices from
the deformed GOE and the spiked population model. We proved tight
deviation bounds for these eigenvalues. An interesting direction to
go next is to study these problems when the Gaussian distribution is
replaced by a stable distribution. This will complete our picture
about the eigenvalues of deformed random matrices.

\section{Appendix}

This appendix collects some auxiliary propositions.

\begin{lemma}\label{Epsilon_Net_Lemma_Eigenvalue}
Let $A\in\mathbb{R}^{n\times n}_{sym}$, $0 < \epsilon < \frac{1}{2}$, $\mathscr{X}$ is an $\epsilon$-net for $(S^{n-1},|\cdot|)$, then
\begin{equation*}
\|A\|\leq \frac{1}{1-2\epsilon}\max_{u\in\mathscr{X}}|u^* A u|
\end{equation*}
The same inequality holds if $\mathscr{X}$ is an $\epsilon$-net for $(S^{n-1}_{+},|\cdot|)$ where $S^{n-1}_{+} = \{x\in S^{n-1}:  x_1\geq 0\}$.
\end{lemma}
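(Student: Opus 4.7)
The plan is to run the standard $\epsilon$-net discretization argument for operator norms, exploiting the symmetry of $A$ to reduce the quadratic form $u^*Au$ to a bilinear estimate. Since $A$ is symmetric, $\|A\| = \max_{x\in S^{n-1}} |x^*Ax|$, and by compactness this maximum is attained at some $x^\star \in S^{n-1}$. The goal is to compare $x^{\star*}Ax^\star$ with $u^*Au$ for the closest $u \in \mathscr{X}$.

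First I would pick $u \in \mathscr{X}$ with $|u - x^\star| \leq \epsilon$, which exists by the definition of an $\epsilon$-net. Then I would expand
\begin{equation*}
x^{\star*}Ax^\star - u^*Au = (x^\star - u)^*Ax^\star + u^*A(x^\star - u),
\end{equation*}
and use the triangle inequality together with the basic bound $|a^*Ab| \leq \|A\|\,|a|\,|b|$ to obtain
\begin{equation*}
|x^{\star*}Ax^\star - u^*Au| \leq \|A\|\,|x^\star-u|\,|x^\star| + \|A\|\,|u|\,|x^\star - u| \leq 2\epsilon\|A\|.
\end{equation*}
Rearranging yields $|u^*Au| \geq (1-2\epsilon)\|A\|$, and taking the maximum over $\mathscr{X}$ gives the claimed inequality. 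Note that $1-2\epsilon > 0$ because $\epsilon < 1/2$.

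For the second part, the only modification needed is to arrange that the maximizer lies in $S^{n-1}_+$. Because $A$ is symmetric, $(-x)^*A(-x) = x^*Ax$, so if the first coordinate of $x^\star$ is negative I would simply replace $x^\star$ by $-x^\star$; this preserves both the norm and the value of $|x^{\star*}Ax^\star|$ while placing $x^\star$ in $S^{n-1}_+$. The $\epsilon$-net property of $\mathscr{X}$ over $S^{n-1}_+$ then provides the required $u$, and the identical bilinear estimate closes the argument. There is no real obstacle here; the only subtle point is remembering to use the symmetry of $A$ both in the reflection trick and in writing $\|A\|$ as the supremum of the quadratic form $|x^*Ax|$ rather than the bilinear form $|x^*Ay|$.
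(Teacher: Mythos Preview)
Your proof is correct and essentially identical to the paper's: both pick a maximizer $x^\star$ of $|x^*Ax|$ on the sphere, approximate it by $u\in\mathscr{X}$, use the same bilinear decomposition $x^{\star*}Ax^\star - u^*Au = (x^\star - u)^*Ax^\star + u^*A(x^\star - u)$ with the bound $2\epsilon\|A\|$, and handle the $S^{n-1}_+$ case by the reflection $x^\star\mapsto -x^\star$.
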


\begin{proof}
Let $x\in S^{n-1}$ be such that $\|A\| = x^* A x$. We can arrange
$x\in S^{n-1}_{+}$ (by changing $x$ to $-x$ if $x_1 <0$) when
$\mathscr{X}$ is an $\epsilon$-net for $S^{n-1}_+$. Choose
$y\in\mathscr{X}$ which approximates $x$ as $|x-y|\leq\epsilon$. By
the triangle inequality, we have
\begin{equation*}
|x^*Ax -y^*Ay|=|x^*A(x-y) + (x-y)^*Ay|\leq2\epsilon\|A\|
\end{equation*}
It follows that $|y^* Ay|\geq |x^*Ax|-2\epsilon\|A\| = \|A\| - 2\epsilon \|A\|$, $\|A\|\leq\frac{1}{1-2\epsilon}|y^*Ay|\leq\frac{1}{1-2\epsilon}\max_{u\in\mathscr{X}}|u^*Au|$.
\end{proof}

\begin{lemma}\label{Epsilon_Net_Lemma_Singularvalue}
Let $A\in\mathbb{R}^{p\times n}$, $0 < \epsilon <1$, $\mathscr{X}$ is an $\epsilon$-net for $(S^{p-1},|\cdot|)$, then
\begin{equation*}
\|A\|\leq\frac{1}{1-\epsilon}\max_{x\in\mathscr{X},y\in S^{n-1}} x^*Ay
\end{equation*}
The same inequality holds if $\mathscr{X}$ is an $\epsilon$-net for $(S^{p-1}_+,|\cdot|)$ where $S^{p-1}_+ = \{x\in S^{p-1}: x_1\geq 0\}$.
\end{lemma}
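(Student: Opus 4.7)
The plan is to mimic the proof of the preceding lemma (Lemma \ref{Epsilon_Net_Lemma_Eigenvalue}), but working with the bilinear variational formula for the operator norm instead of the quadratic one. The starting point is the identity
\begin{equation*}
\|A\| = \max_{x\in S^{p-1},\,y\in S^{n-1}} x^*Ay,
\end{equation*}
so I can pick maximizers $\bar{x}\in S^{p-1}$, $\bar{y}\in S^{n-1}$ with $\bar{x}^*A\bar{y} = \|A\|$. In the $S^{p-1}_+$ variant, if $\bar{x}_1 < 0$ then I simply replace $(\bar{x},\bar{y})$ by $(-\bar{x},-\bar{y})$; the bilinear form is invariant under this sign flip, so I may assume $\bar{x}\in S^{p-1}_+$ from the outset.

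Next, I would choose $x\in\mathscr{X}$ with $|x-\bar{x}|\le\epsilon$, using the $\epsilon$-net hypothesis (over $S^{p-1}$ or $S^{p-1}_+$ as appropriate). The key estimate is a single Cauchy-Schwarz-type bound that replaces the triangle inequality used in the previous lemma: only one slot has to be approximated, so
\begin{equation*}
|\bar{x}^*A\bar{y} - x^*A\bar{y}| = |(\bar{x}-x)^*A\bar{y}| \le |\bar{x}-x|\cdot \|A\|\cdot |\bar{y}| \le \epsilon\|A\|.
\end{equation*}
Rearranging gives $x^*A\bar{y}\ge(1-\epsilon)\|A\|$, and since $\bar{y}\in S^{n-1}$ the inner maximum over $y\in S^{n-1}$ already exceeds this quantity, hence
\begin{equation*}
\max_{x\in\mathscr{X},\,y\in S^{n-1}} x^*Ay \ge (1-\epsilon)\|A\|,
\end{equation*}
which is the desired inequality after dividing by $1-\epsilon$.

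There is no real obstacle here; the only subtlety worth flagging is that the singular-value setting costs only one factor of $\epsilon$ (not $2\epsilon$) because the net is needed only on the left slot while the right slot remains the full sphere $S^{n-1}$. This is why the prefactor improves from $\tfrac{1}{1-2\epsilon}$ in Lemma \ref{Epsilon_Net_Lemma_Eigenvalue} to $\tfrac{1}{1-\epsilon}$ here, and also why we may take $\epsilon$ up to $1$ rather than $\tfrac{1}{2}$.
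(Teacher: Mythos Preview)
Your proposal is correct and matches the paper's proof essentially line for line: pick maximizers, replace $\bar{x}$ by a nearby net point, and use a single Cauchy--Schwarz bound to lose only a factor of $\epsilon$. Your handling of the $S^{p-1}_+$ case by flipping both $\bar{x}$ and $\bar{y}$ is in fact slightly more explicit than the paper's ``we can arrange $x_0\in S^{p-1}_+$.''
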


\begin{proof}
Pick $x_0\in S^{p-1},y_0\in S^{n-1}$ so that $\|A\| = x_0^* A y_0$.
We can arrange $x_0\in S^{p-1}_+$ in the $S^{p-1}_+$ case. Find
$\tilde{x}\in\mathscr{X}$ with $|\tilde{x} - x_0|\leq\epsilon$, then
\begin{equation*}
|\tilde{x}^*Ay_0 - x_0^*Ay_0|\leq \|A\|\cdot|\tilde{x}-x_0|\cdot|y_0|\leq\epsilon\|A\|
\end{equation*}
Thus
\begin{align*}
&\tilde{x}^*Ay_0\geq x_0^*Ay_0 - \epsilon \|A\| = \|A\| - \epsilon\|A\|\\
&\|A\|\leq \frac{1}{1-\epsilon}\tilde{x}^*Ay_0\leq\frac{1}{1-\epsilon}\max_{x\in\mathscr{X},y\in S^{n-1}} x^*Ay
\end{align*}
\end{proof}

\begin{lemma}\label{Epsilon_Net_Lemma_Smallest_Singularvalue}
Let $A\in\mathbb{R}^{p\times n},p < n$, $s_{\min}(A)$ is the smallest singular value of $A$. Let $\mathscr{X}$ be an $\epsilon$-net of $(S^{p-1},|\cdot|)$, then
\begin{equation*}
\min_{x\in\mathscr{X}}\max_{y\in S^{n-1}}x^*Ay \leq s_{\min}(A) + \epsilon \|A\|
\end{equation*}
The same inequality holds if $\mathscr{X}$ is an $\epsilon$-net of $(S^{p-1}_+,|\cdot|)$ where $S^{p-1}_+ = \{x\in S^{p-1}: x_1\geq 0\}$.
\end{lemma}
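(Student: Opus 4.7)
The plan is to reduce the minimax quantity to a norm expression and then imitate the $\epsilon$-net arguments used in the two preceding lemmas. First observe the duality
\[
\max_{y\in S^{n-1}} x^{*}Ay \;=\; |A^{*}x|,\qquad x\in S^{p-1},
\]
which turns the left-hand side of the lemma into $\min_{x\in\mathscr{X}}|A^{*}x|$. On the other hand, since $AA^{*}\in\mathbb{R}^{p\times p}$ has eigenvalues equal to the squares of the singular values of $A$, we have $s_{\min}(A) = \min_{x\in S^{p-1}}|A^{*}x|$; the assumption $p<n$ is only needed to ensure this characterization of $s_{\min}(A)$ is the relevant one.

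Next I pick $x_{0}\in S^{p-1}$ achieving the minimum, i.e.\ $|A^{*}x_{0}| = s_{\min}(A)$. In the $S^{p-1}_{+}$ case I replace $x_{0}$ by $-x_{0}$ if needed, which does not change $|A^{*}x_{0}|$, so I can always arrange $x_{0}$ to lie in whichever domain $\mathscr{X}$ covers. By the $\epsilon$-net property, choose $\tilde{x}\in\mathscr{X}$ with $|\tilde{x}-x_{0}|\leq\epsilon$. Then by the triangle inequality and the operator-norm bound $|A^{*}(\tilde{x}-x_{0})|\leq \|A^{*}\|\cdot|\tilde{x}-x_{0}|=\|A\|\cdot|\tilde{x}-x_{0}|$, I get
\[
|A^{*}\tilde{x}| \;\leq\; |A^{*}x_{0}| + |A^{*}(\tilde{x}-x_{0})| \;\leq\; s_{\min}(A) + \epsilon\|A\|.
\]

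Finally, since $\tilde{x}\in\mathscr{X}$,
\[
\min_{x\in\mathscr{X}}\max_{y\in S^{n-1}} x^{*}Ay \;\leq\; \max_{y\in S^{n-1}} \tilde{x}^{*}Ay \;=\; |A^{*}\tilde{x}| \;\leq\; s_{\min}(A) + \epsilon\|A\|,
\]
which is the claim. There is no real obstacle here; the only thing to verify carefully is the identification $s_{\min}(A)=\min_{x\in S^{p-1}}|A^{*}x|$ (this is why the hypothesis $p<n$ is written into the statement, to make $p$ the ``short'' dimension so that $s_{\min}$ is realized on $S^{p-1}$), and the observation that the sign flip $x_{0}\mapsto -x_{0}$ preserves $|A^{*}x_{0}|$, handling the $S^{p-1}_{+}$ variant uniformly.
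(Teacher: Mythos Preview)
Your proof is correct and essentially identical to the paper's: both identify $\max_{y\in S^{n-1}}x^{*}Ay=|A^{*}x|$, pick $x_{0}$ attaining $s_{\min}(A)=|A^{*}x_{0}|$, approximate it by some $\tilde{x}\in\mathscr{X}$, and apply the triangle inequality with $\|A^{*}\|=\|A\|$. Your write-up is slightly more detailed (the explicit remark on the role of $p<n$ and the sign flip for the $S^{p-1}_{+}$ case), but the argument is the same.
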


\begin{proof}
Let  $s_{\min}(A) = x_0^*Ay_0 = |A^* x_0|, x_0\in S^{p-1}, y_0\in
S^{n-1}$, we can arrange $x_0\in S^{p-1}_+$ when $\mathscr{X}$ is an
$\epsilon$-net for $S^{p-1}_+$. Find $\tilde{x}\in\mathscr{X}$ so
that $|\tilde{x} - x_0|\leq \epsilon$, then
\begin{align*}
&\min_{x\in\mathscr{X}}\max_{y\in S^{n-1}}x^*Ay\leq\max_{y\in S^{n-1}}\tilde{x}^*Ay = |A^*\tilde{x}|\\
&\quad\leq |A^*x_0| + |A^*(\tilde{x} - x_0)|\leq s_{\min}(A) + \epsilon\|A\|
\end{align*}
\end{proof}

\begin{lemma}\label{Epsilon_Net_Construction_1} Let $B^m_2 =\{x\in\mathbb{R}^m: \sum_{i = 1}^{m} x_i^2 \leq 1\}$, for $x,y\in B^m_2$, define
\begin{equation*}
\rho_m (x,y) = \sqrt{|x - y|^2 + (\sqrt{1 - |x|^2} - \sqrt{1 - |y|^2})^2}
\end{equation*}
Then

(i):  $\rho_m$ is a metric on $B^m_2$.

(ii):  For $0 < \epsilon \leq \frac{1}{3}$, there exists an $\epsilon$-net for $([0,1], \rho_1)$ with size $\leq \frac{2}{\epsilon}$.

(iii):  For $0 < \epsilon \leq \frac{1}{3}$, when $m\geq 2$, there
exists an $\epsilon$-net for $(B^m_2, \rho_m)$ with size $\leq
\frac{4m^2}{\epsilon}(1 + \frac{2m}{(m-1)\epsilon})^{m-1}$.
\end{lemma}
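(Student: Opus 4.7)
The plan is to identify $(B_2^m, \rho_m)$ isometrically with the closed upper hemisphere $S^m_+ \subset \mathbb{R}^{m+1}$ (equipped with the restricted Euclidean metric) via the map $\phi(x) = (x, \sqrt{1-|x|^2})$; under this identification all three parts become transparent. For part (i), $\phi$ is injective and $\rho_m(x,y) = |\phi(x) - \phi(y)|$ by construction, so the metric axioms for $\rho_m$ are pulled back from the ambient Euclidean metric. For part (ii), I parameterize $[0,1]$ by $u = \cos\theta$ with $\theta \in [0, \pi/2]$, making $\phi(u) = (\cos\theta, \sin\theta)$ a point on the quarter circle, which yields $\rho_1(u_1, u_2) = 2|\sin\tfrac{\theta_1-\theta_2}{2}| \leq |\theta_1 - \theta_2|$. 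The net is then built by choosing angles $\theta_j = (j - \tfrac{1}{2})\epsilon$ for $j = 1, \ldots, \lceil \pi/(2\epsilon) \rceil$; since $\lceil \pi/(2\epsilon) \rceil \leq \pi/(2\epsilon) + 1 \leq 2/\epsilon$ whenever $\epsilon \leq 2 - \pi/2$, in particular for $\epsilon \leq 1/3$, the desired size bound follows.

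For part (iii), the strategy is to stratify $B_2^m$ by the radius $|x| = u$ and define
\[
\mathscr{X} \;=\; \bigcup_{u^* \in \mathscr{M}} \mathscr{K}(u^*),
\]
where $\mathscr{M}$ is a $\delta_1$-net for $([0,1], \rho_1)$ and each $\mathscr{K}(u^*)$ is a Euclidean $\delta_2$-net for the sphere $u^* S^{m-1}$. Given $x \in B_2^m$, I select $u^* \in \mathscr{M}$ within $\rho_1$-distance $\delta_1$ of $|x|$ and pass through the intermediate point $x^* = u^* (x/|x|) \in u^* S^{m-1}$ (or any point of this sphere if $x=0$). A direct computation gives
\[
\rho_m(x, x^*)^2 \;=\; (|x| - u^*)^2 + \bigl(\sqrt{1-|x|^2} - \sqrt{1-(u^*)^2}\bigr)^2 \;=\; \rho_1(|x|, u^*)^2 \leq \delta_1^2,
\]
and for $y \in \mathscr{K}(u^*)$ approximating $x^*$, the $(m+1)$-th coordinate in the hemispheric embedding is identical for $x^*$ and $y$, so $\rho_m(x^*, y) = |x^* - y| \leq \delta_2$. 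The triangle inequality therefore shows that $\mathscr{X}$ is a $(\delta_1 + \delta_2)$-net.

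It remains to tune the two scales and count. I set $\delta_1 = \epsilon/m$ and $\delta_2 = (m-1)\epsilon/m$, so $\delta_1 + \delta_2 = \epsilon$ and $\epsilon/m \leq 1/3$. Part (ii) immediately gives $|\mathscr{M}| \leq 2m/\epsilon$, and covering $S^{m-1}$ by $2m$ coordinate hemispheres, each handled by a Euclidean $\delta_2$-net for a ball in $\mathbb{R}^{m-1}$ of size at most $(1 + 2/\delta_2)^{m-1}$, yields $|\mathscr{K}(u^*)| \leq 2m \bigl(1 + \tfrac{2m}{(m-1)\epsilon}\bigr)^{m-1}$. Multiplying the two bounds reproduces the claim.

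The main obstacle is to obtain a covering bound on $S^{m-1}$ with the \emph{correct} exponent $m-1$: the naive volumetric bound in the ambient $\mathbb{R}^m$ delivers $(1 + 2/\delta_2)^m$, which would ruin the stated bound. The hemisphere decomposition fixes the exponent at the cost of the prefactor $2m$, and this is exactly what produces the $4m^2/\epsilon$ coefficient. Everything else is a routine combination: the triangle inequality for $\rho_m$ via the intermediate radial point $x^*$, and the balanced choice $\delta_1 : \delta_2 = 1 : (m-1)$ that couples the radial and angular contributions to $\rho_m$.
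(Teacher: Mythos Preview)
Your isometric embedding $\phi:(B_2^m,\rho_m)\to(S^m_+,|\cdot|)$ makes (i) immediate and gives a much cleaner proof of (ii) than the paper's recursive construction (the paper builds a sequence $\eta_1>\eta_1'>\eta_2>\cdots$ via $x_{i+1}=x_i+2\sqrt{x_i}$ and verifies $x_i\ge i^2/3+i/6$). One minor point in (ii): the last angle $\theta_K=(K-\tfrac12)\epsilon$ can exceed $\pi/2$, so $u_K=\cos\theta_K$ may land outside $[0,1]$; replacing that single point by $0$ fixes this without affecting the size bound.

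For (iii), your radial stratification and the split $\delta_1=\epsilon/m$, $\delta_2=(m-1)\epsilon/m$ coincide exactly with the paper's choice (written there as $a=1-1/m$). The gap is in the sphere-covering step. Projecting a coordinate hemisphere $H_i^\pm\subset S^{m-1}$ onto $B_2^{m-1}$ is $1$-Lipschitz, but the inverse $y\mapsto(y,\sqrt{1-|y|^2})$ is not: by your own formula $|\phi(y)-\phi(y')|=\rho_{m-1}(y,y')\ge|y-y'|$, with strict inequality whenever $|y|\ne|y'|$. Thus lifting a Euclidean $\delta_2$-net of $B_2^{m-1}$ does \emph{not} produce a Euclidean $\delta_2$-net of $H_i^\pm$. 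Concretely, for $m=2$ and $\delta_2=0.1$, the lift of $0.95\in[-1,1]$ is $(0.95,\sqrt{0.0975})$, at Euclidean distance $\approx 0.316$ from $(1,0)\in H_2^+$. In fact your embedding shows $(H_i^\pm,|\cdot|)\cong(B_2^{m-1},\rho_{m-1})$, so what you would need here is exactly statement (iii) one dimension down---circular unless you run an induction, which would in any case produce different constants.

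The paper gets $|\mathscr{K}|\le 2m(1+2/\delta_2)^{m-1}$ by a different, self-contained argument: take a maximal $\delta_2$-separated set $A\subset S^{m-1}$; the Euclidean balls of radius $\delta_2/2$ around its points are disjoint and lie in the annulus $(1+\delta_2/2)B_2^m\setminus(1-\delta_2/2)B_2^m$, so volume comparison gives
\[
|A|\Bigl(\frac{\delta_2}{2}\Bigr)^m\;\le\;\Bigl(1+\frac{\delta_2}{2}\Bigr)^m-\Bigl(1-\frac{\delta_2}{2}\Bigr)^m\;\le\;m\Bigl(1+\frac{\delta_2}{2}\Bigr)^{m-1}\delta_2.
\]
Substituting this for your $|\mathscr{K}(u^*)|$ bound closes (iii) with the stated constant; the rest of your argument then goes through unchanged.
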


\begin{proof}
(i):  To check the triangle inequality, we use $\sqrt{(a+b)(c+d)}\geq\sqrt{ac} + \sqrt{bd}$ to lower bound the cross term in the expansion of $(\rho_m (x,y) + \rho_m (y,z))^2$
\begin{align*}
&(\rho_m (x,y) + \rho_m (y,z))^2\geq |x-y|^2 + (\sqrt{1-|x|^2} - \sqrt{1 - |y|^2})^2\\
 &\quad + |y-z|^2 + (\sqrt{1-|y|^2} - \sqrt{1 - |z|^2})^2\\
 &\quad + 2(|x-y|\cdot|y-z| + |\sqrt{1-|x|^2} - \sqrt{1- |y|^2}|\cdot|\sqrt{1 - |y|^2} - \sqrt{1-|z|^2}|)\\
&\geq (x-y+y-z)^2 + (\sqrt{1-|x|^2} - \sqrt{1- |y|^2}+\sqrt{1 - |y|^2} - \sqrt{1-|z|^2})^2\\
&= \rho_m (x,z)^2
\end{align*}

(ii):  Let $1 > \eta_1 > \eta_1' > \eta_2 > \eta_2' > \cdots \geq 0$ be such that
\begin{align*}
&\rho_1 (x,\eta_1)\leq \epsilon,\quad x\in [\eta_1, 1]\\
&\rho_1 (x,\eta_i) \leq \epsilon, \quad x\in [\eta_i',\eta_i], \forall i\\
&\rho_1 (x,\eta_{i+1}) \leq \epsilon, \quad x\in[\eta_{i+1},\eta_i'], \forall i
\end{align*}
Then $\{\eta_1, \eta_2, \cdots\}$ is an $\epsilon$-net. We can pick
$\eta_1 = 1 -\frac{\epsilon^2}{2}$. For $0\leq u \leq v <1$, by
applying the mean value theorem on $\sqrt{1-u^2} - \sqrt{1 - v^2}$,
we have
\begin{equation*}
\rho_1 (u,v)\leq\sqrt{(u-v)^2 + \frac{v^2}{1 - v^2} (u-v)^2} = \frac{v-u}{\sqrt{1-v^2}}
\end{equation*}
Writing $\eta_i = 1 - x_i \epsilon^2, \eta_i' = 1 - x_i' \epsilon^2$ and using the above inequality, we have
\begin{align*}
&\rho_1 (\eta_i',\eta_i)\leq \frac{\eta_i - \eta_i'}{\sqrt{1 - \eta_i^2}} = \frac{(x_i'- x_i)\epsilon}{\sqrt{x_i( 1 + \eta_i)}}\\
&\rho_1 (\eta_{i+1},\eta_i')\leq \frac{\eta_i' - \eta_{i+1}}{\sqrt{1 - (\eta'_i)^2}}= \frac{(x_{i+1} - x_i')\epsilon}{\sqrt{x_i' (1 + \eta_i')}}
\end{align*}
Hence the condition on $\eta_i, \eta_i'$ is satisfied if the following hold
\begin{align*}
&x_i' \leq x_i + \sqrt{x_i (1 + \eta_i)}\\
&x_{i+1} \leq x_i' + \sqrt{x_i' (1 + \eta_i')}
\end{align*}
Therefore, we can construct $\eta_i,\eta_i'$ inductively by letting $x_1 = \frac{1}{2}, x_i' = x_i +\sqrt{x_i}, x_{i+1} = x_i + 2\sqrt{x_i}$.

By induction on $i$, it is easy to show that $x_i \geq
\frac{1}{3}i^2 + \frac{1}{6}i$. Let $k$ be the smallest positive
integer such that $(\frac{1}{3}k^2 + \frac{1}{6}k)\epsilon^2 \geq
1$, then $\eta_{k-1} > 0\geq \eta_k$. Thus
$\{\eta_1,\cdots,\eta_{k-1},0\}$ is an $\epsilon$-net for
$([0,1],\rho_1)$, and
\begin{equation*}
 k\leq [\sqrt{\frac{3}{\epsilon^2} + \frac{1}{16}} - \frac{1}{4}] + 1 \leq \frac{2}{\epsilon}
\end{equation*}
The last step uses the condition $0 < \epsilon \leq \frac{1}{3}$.

(iii): The construction of $\epsilon$-net in higher dimension is
based on two observations:  (a):  The restriction of $\rho_m$ on a
fixed radius is $\rho_1$, i.e. $\rho_m(sx,tx) = \rho_1(s,t),x\in
S^{m-1}, s,t\in [0,1]$;  (b):  The restriction of $\rho_m$ on the
sphere $S^{m-1} (r) = \{x\in B^m_2: |x| = r\}$ is the Euclidean
metric, i.e. $\rho_m(x,y) = |x-y|,x,y\in S^{m-1}(r)$.

Let's recall a basic fact about $\epsilon$-net of the sphere: For $0
< \epsilon < 2$, there exists an $\epsilon$-net for
$(S^{m-1},|\cdot|)$ with cardinality $\leq 2m(1 +
\frac{2}{\epsilon})^{m-1}$. Proof: Consider a maximal
$\epsilon$-separated subset $A$ of $S^{m-1}$, then $A$ is
automatically an $\epsilon$-net. The $\frac{\epsilon}{2}$-balls
centered at these points are disjoint and contained in
$(1+\frac{\epsilon}{2})B_2^m \backslash
(1-\frac{\epsilon}{2})B^m_2$, by volume counting we get
\begin{equation*}
|A|\cdot (\frac{\epsilon}{2})^m \leq (1+\frac{\epsilon}{2})^m - (1 - \frac{\epsilon}{2})^m \leq m(1+\frac{\epsilon}{2})^{m-1}\epsilon
\end{equation*}
Hence $|A|\leq 2m(1 + \frac{2}{\epsilon})^{m-1}$.

Let $\{x^1,\cdots, x^M\}$ be an $a\epsilon$-net for
$(S^{m-1},\rho_m)$, $M\leq 2m(1+\frac{2}{a\epsilon})^{m-1}, 0 < a <
1$. Since $0 < (1-a)\epsilon\leq \frac{1}{3}$, by (ii), we can find
an $\epsilon$-net $\{\eta_1,\cdots,\eta_N\}$ for $([0,1],\rho_1)$
with $N\leq \frac{2}{(1-a)\epsilon}$. Then
\begin{equation*}
 \mathscr{N} = \{\eta_i x^j: 1\leq i \leq N, 1\leq j\leq M\}
\end{equation*}
is an $\epsilon$-net for $(B^m_2,\rho_m)$, and $|\mathscr{N}|\leq
NM\leq \frac{4m}{(1-a)\epsilon}(1 + \frac{2}{a\epsilon})^{m-1}$.
Choosing $a = 1-\frac{1}{m}$ gives the desired bound.

To see $\mathscr{N}$ is an $\epsilon$-net, pick $x\in B^m_2$, we can
find $\eta_i$ with $\rho_1 (|x|,\eta_i)\leq (1-a)\epsilon$, and
$x^j$ with $|\frac{x}{|x|} - x^j|\leq a\epsilon$ (If $x=0$,replace
$\frac{x}{|x|}$ by any $u\in S^{m-1}$). Then
\begin{align*}
&\rho_m (x,\eta_i \frac{x}{|x|}) = \rho_1 (|x|,\eta_i)\leq (1-a)\epsilon\\
&\rho_m (\eta_i \frac{x}{|x|}, \eta_i x^j) = \eta_i |\frac{x}{|x|} - x^j|\leq a\epsilon
\end{align*}
By triangle inequality, we have $\rho_m (x,\eta_i x^j)\leq
(1-a)\epsilon + a\epsilon = \epsilon$.
\end{proof}

\begin{lemma}\label{Epsilon_Net_Construction_2} Assume $1\leq m < n$, for $u\in B^m_2$, define
\begin{equation*}
 \mathscr{E}(u) = \{x\in S^{n-1}: (x_1,\cdots,x_m) = u\}
\end{equation*}
Let $\mathscr{N}$ be an $\epsilon$-net for $(B^m_2,\rho_m)$ where
$\rho_m$ is defined in lemma $\ref{Epsilon_Net_Construction_1}$,
then $\mathscr{X} = \cup_{u\in\mathscr{N}}\mathscr{E}(u)$ is an
$\epsilon$-net for $(S^{n-1},|\cdot|)$. When $m=1$, if $\mathscr{N}$
is an $\epsilon$-net for $([0,1],\rho_1)$, then
$\cup_{u\in\mathscr{N}}\mathscr{E}(u)$ is an $\epsilon$-net for
$(S^{n-1}_{+},|\cdot|)$.
\end{lemma}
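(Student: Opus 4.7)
The plan is to prove this by constructing an explicit lift: given any $x \in S^{n-1}$, I build a point $y \in \mathscr{X}$ with $|x-y| \leq \epsilon$ by first approximating the leading $m$ coordinates of $x$ in the $\rho_m$ metric, then rescaling the tail to restore unit norm. The entire content of the lemma is that the metric $\rho_m$ is engineered so that this rescaling exactly converts the $\rho_m$ distance between leading coordinates into the Euclidean distance between the lifted vectors.

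In detail, given $x \in S^{n-1}$, set $u_0 = (x_1,\ldots,x_m)$, which lies in $B^m_2$ since $|u_0|^2 \leq |x|^2 = 1$. By hypothesis on $\mathscr{N}$, choose $u \in \mathscr{N}$ with $\rho_m(u_0, u) \leq \epsilon$. Define $y \in \mathbb{R}^n$ by setting $(y_1,\ldots,y_m) = u$ and, when $|u_0| < 1$,
\[
(y_{m+1},\ldots,y_n) \;=\; \frac{\sqrt{1-|u|^2}}{\sqrt{1-|u_0|^2}}\,(x_{m+1},\ldots,x_n).
\]
Then $|y|=1$, so $y \in \mathscr{E}(u) \subset \mathscr{X}$. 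Writing $a=\sqrt{1-|u_0|^2}$ and $b=\sqrt{1-|u|^2}$, a direct computation gives $\sum_{i=m+1}^n (x_i - y_i)^2 = (1 - b/a)^2 a^2 = (a - b)^2$, so
\[
|x-y|^2 \;=\; |u_0 - u|^2 + (\sqrt{1-|u_0|^2} - \sqrt{1-|u|^2})^2 \;=\; \rho_m(u_0,u)^2 \;\leq\; \epsilon^2.
\]
The edge case $|u_0| = 1$ forces $(x_{m+1},\ldots,x_n) = 0$; here I simply let the tail of $y$ be any vector of norm $\sqrt{1-|u|^2}$ in $\mathbb{R}^{n-m}$. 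The same computation applies and again yields $|x-y| = \rho_m(u_0,u) \leq \epsilon$.

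For the $m=1$ variant, suppose $x \in S^{n-1}_+$, so $x_1 \in [0,1]$. Apply the construction above using the $\epsilon$-net $\mathscr{N} \subset [0,1]$: the approximating $u \in \mathscr{N}$ is nonnegative, hence the lifted $y$ has $y_1 = u \geq 0$, i.e., $y \in S^{n-1}_+ \cap \mathscr{E}(u) \subset \mathscr{X}$, so $\mathscr{X}$ is an $\epsilon$-net for $(S^{n-1}_+, |\cdot|)$.

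There is no real obstacle here; the only subtlety is the degenerate case $|u_0| = 1$, which is handled by observing that $\rho_m$ still accounts for the resulting tail discrepancy correctly. The whole argument is really a verification that the definition of $\rho_m$ in Lemma \ref{Epsilon_Net_Construction_1} was chosen precisely to make the map $u \mapsto \mathscr{E}(u)$ a $1$-Lipschitz identification between $(B^m_2, \rho_m)$ and the quotient of $(S^{n-1}, |\cdot|)$ by the last $n-m$ coordinates.
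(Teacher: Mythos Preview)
Your proof is correct and takes essentially the same approach as the paper: split $x$ into its first $m$ coordinates $u_0$ and a tail, approximate $u_0$ by some $u\in\mathscr{N}$, then lift by rescaling the tail so that the resulting $y$ lies in $\mathscr{E}(u)$, and observe that $|x-y|=\rho_m(u_0,u)$ by design. Your treatment of the degenerate case $|u_0|=1$ (choosing any tail of norm $\sqrt{1-|u|^2}$) is in fact slightly cleaner than the paper's, which sets $y=(u,0)$ and thus does not land on the sphere unless $|u|=1$.
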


\begin{proof}
For $x\in S^{n-1}$, write $x = (x',x'')$, where $x'$ is the first
$m$ coordinates. Since $\mathscr{N}$ is an $\epsilon$-net for
$(B^m_2,\rho_m)$, we can find $u\in\mathscr{N}$ with
$\rho_m(x',u)\leq\epsilon$. Then $y = (u,\frac{\sqrt{1 -
|u|^2}}{\sqrt{1 - |x'|^2}}x'')\in\mathscr{E}(u)\subset\mathscr{X}$
(If $x''=0$,let $y = (u,0)$) is such that
\begin{equation*}
 |x - y| = \sqrt{|x' - u|^2 + |x'' - \frac{\sqrt{1 - |u|^2}}{\sqrt{1 - |x'|^2}}x''|^2} = \rho_m(x', u)\leq \epsilon
\end{equation*}

The proof for the $m=1,(S^{n-1}_{+},|\cdot|)$ case is similar.
\end{proof}

\begin{proposition}\emph{\cite{cis76}}\label{Gaussian_Concentration}
Let $X_1,\cdots,X_n$ be i.i.d. $\mathcal{N}(0,1)$ random variables.
$f:\mathbb{R}^n \rightarrow \mathbb{R}$ is Lipschitz. Then
\begin{equation*}
P(f(X) \geq E[f(X)] + t) \leq e^{\frac{-t^2}{2\|f\|_{Lip}^2}},\quad t\geq 0
\end{equation*}
\end{proposition}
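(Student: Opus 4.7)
The plan is to reduce the tail bound to the sub-Gaussian moment generating function estimate
\begin{equation*}
E[\exp(\lambda(f(X) - E f(X)))]\leq \exp(\lambda^2 \|f\|_{Lip}^2/2),\quad \lambda\in\mathbb{R},
\end{equation*}
from which the stated inequality follows by a one-line Chernoff--Markov argument, optimizing in $\lambda>0$ at $\lambda=t/\|f\|_{Lip}^2$. I would then split the proof of the MGF bound into a regularization step, a logarithmic Sobolev inequality, and Herbst's argument.

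\textbf{Step 1 (regularization).} Mollify: replace $f$ by $f_\varepsilon = f*\phi_\varepsilon$ for a smooth bump $\phi_\varepsilon$. Then $f_\varepsilon\in C^\infty(\mathbb{R}^n)$, $\|f_\varepsilon\|_{Lip}\leq \|f\|_{Lip}$, and $|\nabla f_\varepsilon(x)|\leq \|f\|_{Lip}$ pointwise, and $f_\varepsilon\to f$ locally uniformly. Both $E[f_\varepsilon(X)]$ and $P(f_\varepsilon(X)\geq E f_\varepsilon(X)+t)$ converge to their counterparts for $f$, so it suffices to prove the MGF bound for smooth $f$ with $|\nabla f|\leq L:=\|f\|_{Lip}$.

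\textbf{Step 2 (Gaussian log-Sobolev inequality).} Invoke Gross's inequality: for every smooth $h\geq 0$ of polynomial growth,
\begin{equation*}
E[h\log h] - E[h]\log E[h] \leq \tfrac{1}{2}E\!\left[\frac{|\nabla h|^2}{h}\right],
\end{equation*}
where the expectation is over the standard Gaussian $X$ on $\mathbb{R}^n$. This is the one place where a nontrivial result is needed; I would cite it as a black box (Gross, 1975), but it can be derived either by tensorizing the two-point log-Sobolev inequality and passing through the central limit theorem, or by differentiating along the Ornstein--Uhlenbeck semigroup $P_s = e^{sL}$ and using $\frac{d}{ds}\text{Ent}(P_s h) = -I(P_s h)$ with the Bakry--\'Emery criterion.

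\textbf{Step 3 (Herbst's argument).} Set $g=f-E[f(X)]$ and $H(\lambda)=E[e^{\lambda g(X)}]$ for $\lambda>0$. Apply Step 2 with $h=e^{\lambda g}$: using $|\nabla e^{\lambda g}|^2/e^{\lambda g} = \lambda^2|\nabla g|^2 e^{\lambda g}\leq \lambda^2 L^2 e^{\lambda g}$, and noting that $E[e^{\lambda g}\cdot \lambda g]=\lambda H'(\lambda)$, one obtains
\begin{equation*}
\lambda H'(\lambda) - H(\lambda)\log H(\lambda) \leq \tfrac{\lambda^2 L^2}{2}H(\lambda).
\end{equation*}
Dividing by $\lambda^2 H(\lambda)$ recognizes the left side as $\tfrac{d}{d\lambda}\bigl(\tfrac{1}{\lambda}\log H(\lambda)\bigr)$, and since $\tfrac{1}{\lambda}\log H(\lambda)\to E[g]=0$ as $\lambda\to 0^+$, integration yields $\log H(\lambda)\leq \lambda^2 L^2/2$. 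Chernoff--Markov then closes out the proof. For $\lambda<0$ the same argument applied to $-f$ handles the symmetric direction (not needed for the one-sided statement).

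The genuine content is the log-Sobolev inequality of Step 2; Steps 1 and 3 are standard calculus and a convexity argument. An alternative that avoids log-Sobolev is to cite Borell's Gaussian isoperimetric inequality and deduce the median version $P(f\geq \mathrm{med}(f)+t)\leq 1-\Phi(t/L)$ directly, then convert median to mean at the cost of a harmless absolute constant; this is the route of \cite{cis76}. The sharp factor $2$ in the exponent comes out of either approach without loss.
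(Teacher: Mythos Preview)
Your argument is correct: the mollification step, Gross's logarithmic Sobolev inequality, and Herbst's differential inequality combine to give exactly the sub-Gaussian MGF bound, and Chernoff--Markov then yields the stated tail with the sharp constant. There is no gap.

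However, the paper does not actually prove this proposition. It is listed in the appendix as a known result, attributed to Cirel'son--Ibragimov--Sudakov \cite{cis76}, and used as a black box (specifically, as the engine behind Proposition~\ref{Minimax_Gaussian_Concentration}). So there is no ``paper's own proof'' to compare against; you have supplied a self-contained proof where the paper simply cites the literature. Your closing remark is accurate: the original \cite{cis76} route goes through Gaussian isoperimetry rather than log-Sobolev, giving first a median-centered bound $P(f\geq \mathrm{med}(f)+t)\leq 1-\Phi(t/L)\leq e^{-t^2/(2L^2)}$, which one then converts to the mean-centered form. The log-Sobolev/Herbst route you chose is now the more common textbook presentation and has the mild advantage of delivering the mean-centered inequality directly without the median-to-mean conversion.
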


\begin{proposition}\label{Chi_Square_Tail_Bound}
Let $X_1,\cdots,X_n$ be i.i.d. $\mathcal{N}(0,1)$ random variables,
$a_1,\cdots,a_n >0$, then for $t\geq 0$
\begin{align*}
&P(\sum_{i=1}^n a_i (X_i^2 - 1) \geq |a| t + \frac{1}{2}|a|_{\infty} t^2) \leq e^{-\frac{t^2}{4}}\\
&P(\sum_{i=1}^n a_i (X_i^2 - 1) \leq - |a| t)\leq e^{-\frac{t^2}{4}}
\end{align*}
\end{proposition}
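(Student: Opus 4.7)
The plan is to apply Chernoff's method separately for the two tails. Set $S = \sum_{i=1}^n a_i(X_i^2 - 1)$. Since each $X_i^2$ is chi-squared with one degree of freedom, a direct calculation gives $E[\exp(\lambda a_i(X_i^2-1))] = e^{-\lambda a_i}(1-2\lambda a_i)^{-1/2}$ for $\lambda < 1/(2a_i)$, and independence yields
\[
\psi(\lambda) := \log E[e^{\lambda S}] = -\lambda\sum_{i=1}^n a_i - \tfrac{1}{2}\sum_{i=1}^n \log(1-2\lambda a_i), \qquad 2\lambda|a|_\infty < 1.
\]

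For the upper tail I would invoke the elementary estimate $-x - \log(1-x) \leq \frac{x^2}{2(1-x)}$ for $0 \leq x < 1$ (immediate from the power series $-x - \log(1-x) = \sum_{k\geq 2} x^k/k$ bounded by a geometric series). Applying this with $x = 2\lambda a_i$ and summing gives
\[
\psi(\lambda) \leq \frac{\lambda^2 |a|^2}{1 - 2\lambda|a|_\infty}.
\]
Markov's inequality then yields $P(S\geq\tau) \leq \exp(-\lambda\tau + \psi(\lambda))$. With $\tau = |a|t + \tfrac{1}{2}|a|_\infty t^2 = \tfrac{t}{2}(2|a|+|a|_\infty t)$, I would choose $\lambda = \frac{t}{2|a|+|a|_\infty t}$. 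This choice is tailored so that $\lambda\tau = t^2/2$ exactly, while $1 - 2\lambda|a|_\infty = \frac{2|a|}{2|a|+|a|_\infty t}$, which gives $\psi(\lambda) \leq \frac{t^2 |a|}{2(2|a|+|a|_\infty t)} \leq t^2/4$. Combining produces the claimed bound $P(S\geq\tau)\leq e^{-t^2/4}$.

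The lower tail is simpler and requires no Bernstein-type correction. Working with $-S$ and $\lambda \geq 0$, the MGF expression reads $\log E[e^{-\lambda S}] = \lambda\sum a_i - \tfrac{1}{2}\sum\log(1+2\lambda a_i)$. The inequality $\log(1+x) \geq x - x^2/2$ for $x\geq 0$, verified by noting that $f(x)=\log(1+x)-x+x^2/2$ satisfies $f(0)=0$ and $f''(x) = x(2+x)/(1+x)^2 \geq 0$, converts this into $\log E[e^{-\lambda S}] \leq \lambda^2 |a|^2$. Optimizing $-\lambda|a|t + \lambda^2|a|^2$ at $\lambda = t/(2|a|)$ yields $P(-S\geq |a|t)\leq e^{-t^2/4}$.

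There is no real obstacle here; the whole argument reduces to two standard estimates for $\log(1\pm x)$ together with the Chernoff bound. The only point demanding a bit of care is verifying that the particular choice $\lambda = t/(2|a|+|a|_\infty t)$ in the upper tail produces exactly the exponent $-t^2/4$; this works precisely because the form of $\tau$ matches the denominator of $\lambda$ so that the linear and quadratic contributions balance cleanly.
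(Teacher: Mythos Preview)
Your approach is exactly what the paper has in mind: it simply cites Laurent--Massart \cite{lm00}, whose proof is precisely Chernoff's method with the same MGF bound $\psi(\lambda)\le \lambda^2|a|^2/(1-2\lambda|a|_\infty)$. The lower-tail argument is correct as written.

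There is, however, an arithmetic slip in the upper tail. With your choice $\lambda=\dfrac{t}{2|a|+|a|_\infty t}$ one has
\[
1-2\lambda|a|_\infty=\frac{2|a|-|a|_\infty t}{2|a|+|a|_\infty t},
\]
not $\dfrac{2|a|}{2|a|+|a|_\infty t}$; this makes $\psi(\lambda)\le \dfrac{t^2|a|^2}{4|a|^2-|a|_\infty^2 t^2}\ge t^2/4$, so the final inequality does not follow (and for large $t$ your $\lambda$ even falls outside the admissible range $2\lambda|a|_\infty<1$). The fix is to take $\lambda=\dfrac{t}{2(|a|+|a|_\infty t)}$. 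Then $1-2\lambda|a|_\infty=\dfrac{|a|}{|a|+|a|_\infty t}$, $\psi(\lambda)\le \dfrac{t^2|a|}{4(|a|+|a|_\infty t)}$, $\lambda\tau=\dfrac{t^2(2|a|+|a|_\infty t)}{4(|a|+|a|_\infty t)}$, and the difference is exactly $t^2/4$. With this correction your proof is complete and matches the cited argument.
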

The proof is based on Chernoff's exponential method, see \cite{lm00} page 1325.

\begin{proposition}\emph{\cite{mcdiarmid89} (McDiarmid's Inequality)}\label{McDiarmid_Inequality}
Let $X_i$ be an $\mathbb{S}_i$-valued random variable, $1\leq i\leq
n$, and assume $X_1,\cdots,X_n$ to be independent. Let $f:
\mathbb{S}_1\times\cdots\times\mathbb{S}_n\rightarrow\mathbb{R}$ be
Borel measurable. Suppose that there exist positive constants
$c_1,\cdots,c_n$ such that
\begin{equation*}
 |f(x) - f(x_1,\cdots,x_{i-1},x_i',x_{i+1},\cdots,x_n)|\leq c_i\quad \forall x, x_i'
\end{equation*}
Then for $t\geq 0$
\begin{equation*}
 P(f(X) \geq E[f(X)] + t) \leq e^{\frac{-2t^2}{\sum_{i=1}^n c_i^2}}
\end{equation*}
\end{proposition}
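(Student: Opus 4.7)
The plan is to prove this by the martingale method (often called Azuma--Hoeffding applied to the Doob martingale). The steps are largely standard, so I would emphasize the structure over the calculations.

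First, I would set up the Doob martingale. Define $Z_0 = E[f(X)]$ and $Z_i = E[f(X) \mid X_1,\ldots,X_i]$ for $1 \leq i \leq n$, so $Z_n = f(X)$. This is a martingale with respect to the filtration $\mathcal{F}_i = \sigma(X_1,\ldots,X_i)$, and
\begin{equation*}
f(X) - E[f(X)] = \sum_{i=1}^n D_i, \quad D_i := Z_i - Z_{i-1}.
\end{equation*}
The Chernoff bound will then give, for any $\lambda > 0$,
\begin{equation*}
P(f(X) - E[f(X)] \geq t) \leq e^{-\lambda t}\, E[e^{\lambda \sum_i D_i}].
\end{equation*}

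Second, I would control each $D_i$ conditionally on $\mathcal{F}_{i-1}$. By independence of the $X_j$'s, I can write $Z_i$ and $Z_{i-1}$ as integrals of $f$ against the laws of $X_{i+1},\ldots,X_n$, with the difference being that $Z_{i-1}$ also averages over $x_i$. Setting
\begin{equation*}
h(x_i) := E[f(X_1,\ldots,X_{i-1},x_i,X_{i+1},\ldots,X_n) \mid \mathcal{F}_{i-1}],
\end{equation*}
the bounded differences hypothesis gives $\sup_{x_i} h(x_i) - \inf_{x_i} h(x_i) \leq c_i$. Since $Z_i = h(X_i)$ and $Z_{i-1} = E[h(X_i) \mid \mathcal{F}_{i-1}]$, the random variable $D_i$ is, conditionally on $\mathcal{F}_{i-1}$, centered and supported in an interval of length at most $c_i$.

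Third, I would invoke Hoeffding's lemma: a mean-zero random variable bounded in an interval of length $c$ has moment generating function bounded by $e^{\lambda^2 c^2/8}$. Applied conditionally,
\begin{equation*}
E[e^{\lambda D_i} \mid \mathcal{F}_{i-1}] \leq e^{\lambda^2 c_i^2/8}.
\end{equation*}
Telescoping by the tower property,
\begin{equation*}
E[e^{\lambda \sum_i D_i}] \leq e^{\lambda^2 (\sum_i c_i^2)/8}.
\end{equation*}
Substituting into the Chernoff bound and optimizing over $\lambda > 0$ (the optimum is $\lambda = 4t/\sum_i c_i^2$) yields $P(f(X) - E[f(X)] \geq t) \leq e^{-2t^2/\sum_i c_i^2}$.

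The main obstacle is the second step, the conditional boundedness of the martingale differences. The subtlety is that, while the bounded differences hypothesis is a pointwise statement about $f$, one needs it to translate into a uniform bound for the conditional distribution of $D_i$; this requires invoking independence to swap the conditioning and the variation in $x_i$ cleanly (via Fubini). Once that reduction is in place, Hoeffding's lemma and Chernoff bounding are routine.
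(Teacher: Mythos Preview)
Your proof is correct and is the standard argument for McDiarmid's inequality via the Doob martingale, Hoeffding's lemma applied conditionally, and Chernoff bounding. Note, however, that the paper does not supply its own proof of this proposition: it is stated in the appendix as a cited result from \cite{mcdiarmid89}, so there is no proof in the paper to compare against. Your argument matches the classical one in that reference.
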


\begin{proposition}\emph{(Cauchy's Interlacing Law)}\label{Cauchy_Interlacing_Law}
Let $A\in\mathbb{R}^{n\times n}_{sym}$, deleting the $i_0$-th row
and $i_0$-th colume of $A$, we get a matrix $A_{i_0}$. Then the
eigenvalues of $A$ and $A_{i_0}$ satisfy
\begin{equation*}
\lambda_1 (A)\geq \lambda_1 (A_{i_0}) \geq \lambda_2 (A) \geq \lambda_2 (A_{i_0})\geq\cdots \geq \lambda_{n-1}(A_{i_0}) \geq \lambda_n (A)
\end{equation*}
\end{proposition}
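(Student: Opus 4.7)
The plan is to deduce both chains of inequalities from the Courant--Fischer min-max and max-min characterizations of eigenvalues of symmetric matrices. The key observation is that $A_{i_0}$ is just $A$ restricted to a coordinate subspace: set $W = \{x \in \mathbb{R}^n : x_{i_0} = 0\}$, an $(n-1)$-dimensional subspace. Identifying $W$ with $\mathbb{R}^{n-1}$ by dropping the $i_0$-th coordinate, the quadratic form $x \mapsto x^* A x$ on $W$ coincides with $y \mapsto y^* A_{i_0} y$ on $\mathbb{R}^{n-1}$. Consequently, every min-max or max-min expression for an eigenvalue of $A_{i_0}$ can be rewritten as the same expression for $x^* A x$ ranging over subspaces $V \subset W$.

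To obtain $\lambda_k(A) \ge \lambda_k(A_{i_0})$ for $1 \le k \le n-1$, I would use the max-min form
\begin{equation*}
\lambda_k(A_{i_0}) = \max_{\substack{V \subset W \\ \dim V = k}} \min_{x \in V \cap S^{n-1}} x^* A x.
\end{equation*}
Every $k$-dimensional subspace of $W$ is also a $k$-dimensional subspace of $\mathbb{R}^n$, so the outer maximum is taken over a sub-family of the one appearing in the analogous Courant--Fischer formula for $\lambda_k(A)$; hence $\lambda_k(A_{i_0}) \le \lambda_k(A)$.

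To obtain $\lambda_k(A_{i_0}) \ge \lambda_{k+1}(A)$ for $1 \le k \le n-1$, I would use the min-max form
\begin{equation*}
\lambda_k(A_{i_0}) = \min_{\substack{V \subset W \\ \dim V = n-k}} \max_{x \in V \cap S^{n-1}} x^* A x,
\end{equation*}
noting that the relevant codimension in $W$ is $(n-1)-k+1 = n-k$. Any such $V$ is also an $(n-k)$-dimensional subspace of $\mathbb{R}^n$, so the outer minimum ranges over a sub-family of the one giving $\lambda_{k+1}(A)$, yielding $\lambda_k(A_{i_0}) \ge \lambda_{k+1}(A)$. Interleaving the two families of bounds produces the full chain stated in the proposition.

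There is no substantive obstacle: Cauchy's interlacing law is a textbook consequence of Courant--Fischer. The only care required is the bookkeeping between the parameters $k$ and $n-k$, together with keeping straight that restricting the outer max to a sub-family can only decrease it, while restricting the outer min to a sub-family can only increase it.
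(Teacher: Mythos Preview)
Your argument via Courant--Fischer is correct and is the standard textbook proof of Cauchy's interlacing law. Note, however, that the paper itself does not supply a proof of this proposition: it is listed in the appendix as a known auxiliary result and invoked without argument, so there is nothing to compare against beyond observing that your approach is the classical one.
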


\begin{proposition}\emph{\cite{ktt08}}\label{GOE_Convergence_Rate}
Let $G\in GOE(n,\frac{\sigma^2}{n})$, $F_n(x) = \frac{1}{n}|\{j:
\lambda_j (G)\leq x\}|$. Let $F(x)$ be the distribution function of
the semicircle law with density
$\frac{1}{2\pi\sigma^2}\sqrt{4\sigma^2 - x^2}1_{|x|\leq 2\sigma}dx$.
Then there exists a constant $C$ such that
\begin{equation*}
\sup_{x\in\mathbb{R}}|EF_n(x) - F(x)|\leq \frac{C}{n}
\end{equation*}
\end{proposition}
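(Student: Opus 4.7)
The plan is to establish Proposition \ref{GOE_Convergence_Rate} by the Stieltjes transform method, following the standard three-step scheme: derive an approximate self-consistent equation for the averaged Stieltjes transform, invoke a stability estimate to pass from the equation to a pointwise bound on the Stieltjes transform, and then invert via Bai's inequality to recover a Kolmogorov-distance bound. Define, for $z$ in the upper half plane,
\begin{equation*}
s_n(z) = \frac{1}{n} E[\operatorname{tr}(G - zI)^{-1}], \qquad s(z) = \int \frac{1}{x - z} dF(x),
\end{equation*}
and recall that $s(z)$ is the unique $\mathbb{C}^+$-valued solution of $\sigma^2 s(z)^2 + z s(z) + 1 = 0$.

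For the self-consistent equation, I would expand each diagonal resolvent entry via a Schur complement:
\begin{equation*}
R_{ii}(z) = \frac{1}{g_{ii} - z - g_i^*(G^{(i)} - zI)^{-1} g_i},
\end{equation*}
where $G^{(i)}$ is the principal minor obtained by deleting the $i$-th row and column and $g_i$ is the corresponding off-diagonal vector, independent of $G^{(i)}$. Conditional on $G^{(i)}$, the quadratic form $g_i^*(G^{(i)}-zI)^{-1}g_i$ has mean $(\sigma^2/n)\operatorname{tr}(G^{(i)} - zI)^{-1}$; its deviations are controlled by proposition \ref{Chi_Square_Tail_Bound}, and by Cauchy interlacing (proposition \ref{Cauchy_Interlacing_Law}) the normalized trace of $G^{(i)}$ differs from $s_n(z)$ by at most $O(1/(n|\operatorname{Im} z|))$. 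Averaging the resulting identity in $i$ and then taking expectation gives
\begin{equation*}
\sigma^2 s_n(z)^2 + z s_n(z) + 1 = \varepsilon_n(z), \qquad |\varepsilon_n(z)| \leq \frac{C}{n |\operatorname{Im} z|^k}
\end{equation*}
for some moderate $k$ (tracking the various resolvent norms appearing in the error).

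Next, subtracting the exact identity for $s(z)$ from the approximate identity for $s_n(z)$ yields
\begin{equation*}
(s_n(z) - s(z)) \bigl( \sigma^2 (s_n(z) + s(z)) + z \bigr) = \varepsilon_n(z).
\end{equation*}
Since both $s_n$ and $s$ have strictly positive imaginary part on $\mathbb{C}^+$, the denominator factor is bounded below uniformly in $n$ on any compact set of $\mathbb{C}^+$ whose closure avoids the semicircle edges, which upgrades the $\varepsilon_n(z)$ bound into $|s_n(z) - s(z)| \leq C/(n |\operatorname{Im} z|^{k'})$. The final step is Bai's inversion inequality: $\sup_x |EF_n(x) - F(x)|$ is bounded by the integral of $|s_n - s|$ along a horizontal line $\operatorname{Im} z = v$ plus boundary corrections that are governed by the H\"older regularity of $F$. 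Taking $v$ of order $1/n$ and using the square-root vanishing of the semicircle density at the edges yields the desired $C/n$ rate.

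The main obstacle is obtaining enough resolvent control near the edges $\pm 2\sigma$, where $|\operatorname{Im} z|$ must be pushed down to scale $1/n$ to capture the Kolmogorov distance at that scale, and where the stability denominator $\sigma^2(s_n+s) + z$ degenerates. Controlling $\varepsilon_n(z)$ uniformly down to $\operatorname{Im} z \sim 1/n$ requires sharp bounds on the fluctuations of the quadratic forms $g_i^*(G^{(i)}-zI)^{-1} g_i$ and on the resolvent operator norm; this local-law-type analysis is the technical heart of references such as \cite{ktt08} and is what distinguishes a $C/n$ rate from the weaker $C/\sqrt{n}$ or $C \log n / n$ rates obtained by cruder arguments.
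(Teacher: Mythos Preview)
The paper does not prove this proposition at all: it is stated in the appendix purely as a cited result from \cite{ktt08}, and the paper uses it as a black box (in the proof of Lemma \ref{GOE_Trace_Expectation_Distance}). So there is no ``paper's own proof'' to compare against.

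Your outline is a faithful high-level sketch of the Stieltjes-transform/Bai-inequality route that \cite{ktt08} and related work take, and you correctly flag the real obstacle: pushing the approximate self-consistent equation down to $\operatorname{Im} z \sim 1/n$ near the spectral edges, where the stability factor $\sigma^2(s_n+s)+z$ degenerates. That edge analysis is exactly what separates the $C/n$ rate from the softer $C n^{-1/2}$ or $C n^{-1}\log n$ bounds, and it is not something your sketch actually carries out --- you defer it to \cite{ktt08}. As a proof \emph{proposal} this is fine and honest, but as a self-contained proof it has a genuine gap at precisely the hardest step; if the intent is to reproduce the result rather than cite it, you would need to supply the edge-regime resolvent bounds (or, as \cite{ktt08} in fact does for GOE, exploit the explicit orthogonal-polynomial structure of the Gaussian ensemble rather than a pure Schur-complement local law).
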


\begin{proposition}\emph{\cite{gt05}}\label{MP_Convergence_Rate}
Let $G\in\mathbb{R}^{p\times n}$ with entries being i.i.d.
$\mathcal{N}(0,1)$, $S_n = \frac{1}{n}GG^*$, $F_n
(x)=\frac{1}{p}|\{j: \lambda_j (S_n)\leq x\}|$. Let $F(x)$ be the
distribution function of the law with density
\begin{equation*}
p_c(x) = \begin{cases} \frac{\sqrt{((1+\sqrt{c})^2 -
x)(x - (1-\sqrt{c})^2)}}{2\pi c x} & \quad\text{if } (1-\sqrt{c})^2 \leq x \leq (1+\sqrt{c})^2,\\
0 & \quad\text{otherwise.}
\end{cases}
\end{equation*}
with $c=p/n$. Then there exists a constant $C$ such that
\begin{equation*}
\sup_{x\in\mathbb{R}}|EF_n(x) - F(x)|\leq C(\frac{1}{n}+\frac{1}{p})
\end{equation*}
\end{proposition}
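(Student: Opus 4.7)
The plan is to prove the convergence rate via the Stieltjes transform method. Define
\begin{equation*}
m_n(z) = \int \frac{1}{x-z}\,dF_n(x) = \frac{1}{p}\operatorname{tr}(S_n - zI)^{-1},\qquad m(z) = \int\frac{1}{x-z}\,dF(x),
\end{equation*}
where $m(z)$ is the Marchenko--Pastur Stieltjes transform and satisfies the quadratic self-consistent equation $c\,z\,m(z)^2 + (z - (1-c))m(z) + 1 = 0$ on the upper half plane. The strategy is to show that $\mathbb{E}[m_n(z)]$ approximately satisfies the same equation, deduce $|\mathbb{E}[m_n(z)] - m(z)| \le C/(n\,v^k)$ for $z = u + iv$, and then invert via a Berry--Esseen-type bound to obtain the Kolmogorov distance estimate.

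First I would write the Schur complement expansion for diagonal entries of the resolvent. Let $R(z) = (S_n - zI)^{-1}$ and let $S_n^{(j)}$ denote the Wishart matrix obtained by deleting the $j$-th row of $G$; let $g_j$ be the $j$-th row of $G/\sqrt{n}$, so that the $(j,j)$-entry of $R(z)$ equals $(g_j^*g_j - z - g_j^* G_j^*(S_n^{(j)} - zI)^{-1}G_j g_j)^{-1}$ after a Schur complement computation (here $G_j$ is $G$ with row $j$ deleted). Summing over $j$ and averaging, I get a relation of the form
\begin{equation*}
\mathbb{E}[m_n(z)] = \frac{1}{c\,z\,\mathbb{E}[m_n(z)] - (1-c) - z} + \mathrm{Err}(z),
\end{equation*}
where $\mathrm{Err}(z)$ is controlled by (i) concentration of the quadratic form $g_j^* A g_j$ around $\operatorname{tr}(A)/n$, using standard $\chi^2$ tail bounds (proposition~\ref{Chi_Square_Tail_Bound} above applied to the spectral decomposition of $A$), and (ii) the rank-one perturbation identity $|\operatorname{tr} R(z) - \operatorname{tr} R^{(j)}(z)| \le 1/v$ relating the resolvent of $S_n$ to that of $S_n^{(j)}$. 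These two ingredients yield $|\mathrm{Err}(z)| \le C/(n\,v^k)$ for a small integer $k$.

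Next I would turn the perturbed self-consistent equation into a direct estimate on $|\mathbb{E}[m_n(z)] - m(z)|$. Writing $\Delta(z) = \mathbb{E}[m_n(z)] - m(z)$, subtracting the two equations yields a quadratic relation for $\Delta(z)$; as long as the ``stability'' coefficient (the derivative of the self-consistent equation with respect to $m$) is bounded below, this gives $|\Delta(z)| \le C/(n v^k)$, with the $1/p$ term appearing through the normalization $p/(n c) = 1$. The final step converts the Stieltjes estimate to a distribution-function estimate using Bai's inequality: for distributions $F_1, F_2$ whose Stieltjes transforms satisfy $|m_{F_1}(z) - m_{F_2}(z)| \le \varepsilon(v)$ on $\Im z = v$ and whose densities are uniformly bounded, $\|F_1 - F_2\|_\infty$ is controlled by $v$ plus a weighted integral of $\varepsilon$. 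Optimizing over $v \sim 1/n$ gives the desired $C(1/n + 1/p)$ bound.

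The main obstacle will be the edge behavior near $(1\pm\sqrt{c})^2$, where $m(z)$ has square-root singularities and the stability coefficient degenerates. Controlling $\Delta(z)$ uniformly down to $v$ of order $1/n$ requires a careful case split: in the bulk, one uses the generic stability bound, whereas near the edges one exploits the fact that the imaginary part of $m(z)$ vanishes like $\sqrt{v}$, which still allows the quadratic relation to be inverted at the cost of an additional factor $v^{-1/2}$. Another subtlety is the case $c$ close to $1$, where the hard-edge singularity at $0$ must be treated separately; this is where the $1/p$ term in the rate originates, since one effectively replaces $n$ by $\min(n,p)$ when the minor dimension shrinks.
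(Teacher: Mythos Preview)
The paper does not give its own proof of this proposition: it is quoted as a known result from G\"otze and Tikhomirov \cite{gt05} and simply cited in the appendix with no argument. So there is nothing in the paper to compare your proposal against.

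That said, your sketch is broadly the strategy of the cited reference: Stieltjes-transform self-consistent equation, concentration of quadratic forms, rank-one resolvent perturbation, and a smoothing inequality to pass to the Kolmogorov distance. A few cautions if you intend to carry it out in full. First, the Schur complement identity you wrote is garbled; for Wishart matrices the cleaner route is to work with the companion $n\times n$ matrix $\frac{1}{n}G^*G$ and delete columns of $G$, which gives the standard recursion directly. Second, reaching the sharp rate $O(1/n)$ via Bai's inequality is not automatic: the generic version of that inequality loses a power of $v$ and typically yields only $O(n^{-1/2})$ or $O(n^{-2/3})$; G\"otze--Tikhomirov obtain $O(1/n)$ by exploiting Gaussianity (explicit moment identities and orthogonal-polynomial structure for GUE/LUE) rather than the bare martingale/concentration bounds you invoke. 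Your outline would more naturally produce a suboptimal rate unless you incorporate those refinements. Finally, your explanation of the $1/p$ term is essentially correct in spirit---it reflects the minor dimension $\min(n,p)$---but the hard edge at $0$ when $c\to 1$ is a genuine technical obstacle, not just a bookkeeping issue.
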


\section*{Acknowledgements}
I am grateful to my Ph.D.~advisor, George C. Papanicolaou, for his
encouragement and help in preparing this manuscript.

\end{document}